\newbox\gnBoxA
\newdimen\gnCornerHgt
\newdimen\gnArgHgt
\def\Quinequote #1{%
    \setbox\gnBoxA=\hbox{$#1$}%
    \gnArgHgt=\ht\gnBoxA%
    \ifnum     \gnArgHgt<\gnCornerHgt \gnArgHgt=0pt%
    \else \advance \gnArgHgt by -\gnCornerHgt%
    \fi \raise\gnArgHgt\hbox{$\ulcorner$} \box\gnBoxA %
    \raise\gnArgHgt\hbox{$\urcorner$}}
\newcommand{\Q}{\mathbb{Q}}
\newcommand{\C}{\mathbb{C}}
\newcommand{\R}{\mathbb{R}}
\newcommand{\N}{\mathbb{N}}
\newcommand{\dom}{\operatorname{dom}}
\newcommand{\ran}{\operatorname{ran}}
\newcommand{\norm}[1]{\left\| #1 \right\|}
\newcommand{\B}{\mathcal{B}}
\newcommand{\zerovec}{\mathbf{0}}
\newcommand{\F}{\mathbb{F}}
\newcommand{\Rat}{\mathcal{R}}
\newcommand{\IncludesClosure}{\operatorname{IC}}
\newcommand{\Banach}{\operatorname{Banach}}
\newcommand{\VectorTree}{\operatorname{VectorTree}}
\newcommand{\Disint}{\operatorname{Disint}}
\newcommand{\Hilbert}{\operatorname{Hilbert}}
\newcommand{\Lspace}{\operatorname{Lspace}}
\newcommand{\Lpres}{\mathcal{L}_{\operatorname{pres}}}
\newcommand{\Ldisint}{\mathcal{L}_{\operatorname{disint}}}
\newcommand{\Lchain}{\mathcal{L}_{\operatorname{chain}}}
\newcommand{\Phidisint}{\Phi_{\operatorname{Disint}}}
\newcommand{\Phidis}{\Phi_{\operatorname{Disint}}}
\newcommand{\Phidecomp}{\Phi_{\operatorname{Decomp}}}
\newcommand{\Ind}{\mathbf{1}}
\theoremstyle{theorem}
\newtheorem{theorem}{Theorem}[section]
\newtheorem{lemma}[theorem]{Lemma}
\newtheorem{maintheorem}{Main Theorem}
\theoremstyle{definition}
\newtheorem{definition}[theorem]{Definition}
\theoremstyle{theorem}
\newtheorem{corollary}[theorem]{Corollary}
\theoremstyle{theorem}
\newtheorem{proposition}[theorem]{Proposition}
\theoremstyle{theorem}
\theoremstyle{theorem}
\theoremstyle{definition}
\theoremstyle{theorem}
\numberwithin{equation}{section}
\begin{document}
\title{On the complexity of classifying Lebesgue spaces}

\author{Tyler A. Brown}
\address{Department of Mathematics\\
Iowa State University\\
Ames, Iowa 50011 USA}
\email{tab5357@iastate.edu}

\author{Timothy H. McNicholl}
\address{Department of Mathematics\\
Iowa State University\\
Ames, Iowa 50011 USA}
\email{mcnichol@iastate.edu}

\author{Alexander G. Melnikov}
\address{The Institute of Natural and Mathematical Sciences\\
Private Bag 102 904 NSMC \\
Albany 0745 Auckland, New Zealand
}
\email{alexander.g.melnikov@gmail.com}
\thanks{Part of this research was conducted while T. McNicholl visited A. Melnikov.  This visit was funded by Marsden Fund of New Zealand and by Simons Foundation Grant \# 317870.}

\begin{abstract}
Computability theory is used to evaluate the complexity of classifying various kinds of Lebesgue spaces and associated isometric isomorphism problems.
\end{abstract}
\maketitle

\section{Introduction}\label{sec:intro}
This paper advances and interleaves two general frameworks.  The first framework, which was proposed in \cite{MelIso}, is focused on establishing technical connections between computable structure theory~\cite{AshKn, ErGon}  and computable analysis~\cite{PourElRich, Wei00}. Recently, there have been a number of applications of computable algebraic techniques to the study of effective processes in Banach and metric spaces; see, e.g.,  \cite{MelNg, CompComp, McNicholl.Stull.2019, Clanin.McNicholl.Stull.2019, MeMo}.  The second framework focuses on applying computability-theoretic techniques to classification problems in mathematics. Computable structure theory~\cite{AshKn,ErGon} provides tools for understanding the complexity of
classification and characterisation problems for various classes of algebraic structures (details later).   See \cite{CHKLMMQSW12,McCoyWallbaum12,GonKni, friedberg, DoMo} for further recent applications of computability to classification problems.

Herein, we apply an approach borrowed from effective algebra to produce
a fine-grained algorithmic characterization of separable Lebesgue spaces among all separable Banach spaces. (Recall that an $L^p$ space is a space of the form $L^p(\Omega)$ where $\Omega$ is a measure space, and a Lebesgue space is a space that is an $L^p$ space for some $p$.).  We also measure the complexity of the isometric isomorphism problem (to be defined) for separable Lebesgue spaces; specifics below.

\subsection{Index sets in discrete algebra} Goncharov and Knight \cite{GonKni} suggested a number of applications of computable structure theory to classification problems. We adopt the most common approach via \emph{index sets}.  Recall that a countable structure is \emph{computable} if its domain is the set of natural numbers and if its operations and relations are uniformly Turing computable~\cite{ma61,abR}.  An \emph{index} of a computable structure is an index of a Turing machine that computes these operations and relations.  An index of a structure may be regarded as a finite description of the structure.  
 
  Fix some property $P$; for example, $P$ could be ``is a directly decomposable abelian group".  
  The \emph{index set} of $P$ is the set of all natural numbers that index a structure with property $P$; denote this set by $I_P$.  
  The complexity of the property $P$ is reflected in the complexity of $I_P$ which 
 is usually measured using various hierarchies such as the arithmetical and the analytical hierarchies~\cite{rogers,Soa}.
 The classes in these hierarchies correspond to the number (and type) of quantifiers required to solve the problem. For example, if $I_P$ is $\Sigma^1_1$-complete, then this means that solving the problem requires searching through the uncountably many elements of Baire space $\omega^{\omega}$ in a brute-force fashion. In contrast, if $I_P$ is in either $\Sigma^0_n$ or $\Pi^0_n$, then the decision procedure for $P$ requires understanding of merely $n$-$1$ alternations of quantifiers over natural numbers. It is not difficult to show that all these hierarchies are  proper; see~\cite{rogers,Soa}. For instance, for every $n$ both complexity classes $\Sigma^0_n$ or $\Pi^0_n$ are properly contained in $\Sigma^1_1$. 
 
 For example, using algorithmic tools  Riggs~\cite{riggs} showed that  decomposability of an (abelian) group is a  $\Sigma^1_1$-complete problem. The result of Riggs means that the there is no reasonable way of characterizing non-trivially directly decomposable (abelian) groups.  This is because any reasonable necessary and sufficient condition would make the decomposability property simpler than the brute-force upper bound $\Sigma^1_1$. In stark contrast, complete decomposability of an abelian group is merely $\Sigma^0_7$~\cite{DoMel1}. Usually such results can be  relativized to any oracle. For instance, the above-mentioned results of Riggs and Downey and Melnikov work for arbitrary discrete countable abelian groups.

To measure the complexity of isomorphism,  consider pairs of indices of isomorphic structures in a class. 
  For instance,  the classification of vector spaces  by dimension allows to show that \emph{the isomorphism problem}
 $$\{ \langle i, j \rangle \in \omega \mid A_i, A_j \text{ are vector spaces over $F$ and } A_i \cong A_j\}$$
 is merely $\Pi^0_3$-complete, where  $\langle i, j \rangle   = 2^i 3^j$. 
In contrast, Downey and Montalb\'an~\cite{DoMo} proved that the isomorphism problem for torsion-free abelian groups os $\Sigma^1_1$-complete.
Consequently,
there is no better way to check if two (countable, discrete) torsion-free abelian groups are isomorphic than to search through the uncountably many potential isomorphisms. From the perspective of computability theory, it follows that such groups are unclassifiable up to isomorphism. The abundance of ``monstrous'' examples of such groups in the literature~\cite{Fu,Fu1} strongly support
this conclusion.
 Compare this to vector spaces, free groups, abelian $p$-groups of bounded type or completely decomposable groups which do possess convenient invariants~\cite{DoMo, DoMel1, McCoyWallbaum12, CalHarKnMi2006}. We again emphasize that all these results can be fully relativized and therefore are not restricted to computable members in the respective class.  All results that we mentioned so far are concerned with countable discrete algebraic structures; furthermore, it seems that potential applications of index sets are naturally limited to countable objects.
Nonetheless, a similar methodology has recently been applied to study 
 the complexity of finding a basis in a (discrete) uncountable free abelian group~\cite{GrTurWeFree}. In this paper we will also apply index sets to uncountable objects, but our \emph{analytic} approach is rather different from the set-theoretic one taken in~\cite{GrTurWeFree}. 
 
  \subsection{Index sets in computable analysis} The use of index sets in computable analysis is not entirely new. In the late 1990s, Cenzer and Remmel~\cite{CRindex} used index sets to measure the complexity of effectively closed subsets of standard elementary metric spaces such as $2^{\omega}$. However, index sets in computable analysis have only recently been linked to classification problems for separable spaces. 
  
  The idea here is that, similarly to discrete computable algebras, one can define the notion of a computable presentation of a separable metric space~\cite{Wei00}. 
For example, Turing~\cite{Turing:36, Turing:37}  used density of the rationals to define computable real numbers. Thus, the standard computable copy of the rationals can be viewed as a computable presentation of $\mathbb{R}$. Similarly, we follow \cite{Wei00} and say that a \emph{computable presentation} of -- or  a \emph{computable structure}  on --  a separable metric space is any dense computable sequence $\{p_n\}_{n \in \N}$ of points in the space such that the metric is uniformly computable for points in the sequence.  That is, there is a Turing machine that given $m,n,k \in \N$ produces a rational number $q$ so that $|q - d(p_m,p_n)| < 2^{-k}$.  An index of such a Turing machine is referred to as an \emph{index} of the presentation. 

We emphasize that a presentation of a separable space $M$  does not have to be ``standard". For example, fix any (not necessarily computable) real $\xi$ and consider the collection $(r +\xi)_{r \in \mathbb{Q}}$. Then $(r +\xi)_{r \in \mathbb{Q}}$  is a computable structure on the reals equipped with the usual distance metric $d_{e}(x,y) = |x-y|$. Although $(r +\xi)_{r \in \mathbb{Q}}$ is not \emph{equal} to the ``standard'' structure  $(r)_{r \in \mathbb{Q}}$ on $(\mathbb{R}, d_{e})$, it is easy to see that $(r +\xi)_{r \in \mathbb{Q}}$ and  $(r)_{r \in \mathbb{Q}}$ are \emph{computably isometric} \cite{MelIso}. More generally, any two computable structures, ``natural'' or not, on $\mathbb{R}$ are computably isomorphic, and the same can be said about any separable Hilbert space~\cite{MelIso}.
Note however that many standard metric spaces associated with Banach spaces possess computable structures which are not computably isometric; examples include $(C[0,1], sup)$ and $(\ell^1, d_{1})$; see \cite{MelIso,MelNg} for more examples. 

Now when we have the notion of a computable structure on a Polish metric space and fixed the right morphisms for this category, we can act by analogy with  computable structure theory and list all presentations of separable spaces and study various index sets. 
  Using this novel approach, Melnikov and Nies~\cite{CompComp} showed that  the isomorphism problem and the index set of compact metric spaces are both arithmetical. In contrast, Nies and Solecki~\cite{NSlocal} showed that the index set of locally compact spaces is $\Pi^1_1$-complete, and thus there is no reasonable characterisation of such spaces which would be simpler than the brute-force definition.   Melnikov~\cite{Pontr} used Pontryagin duality theory to illustrate that the (topological) isomorphism problems for compact connected and profinite abelian groups are both $\Sigma^1_1$-complete, and therefore such groups cannot be classified by reasonable invariants; compare this with the above-mentioned results of Melnikov and Nies~\cite{CompComp}.  All of these results relativize.
  
Similarly to computable Polish metric spaces, computable Banach spaces also admit computable presentations.  These presentations are formally defined in Section \ref{sec:back::subsec:CA}.  In brief, a presentation of a Banach space consists of a linearly dense sequence $\{v_n\}_{n \in \N}$ of vectors.  A presentation of a Banach space is computable if the norm function is computable on the set of rational linear combinations of $v_0,v_1, \ldots$.  An index of a Turing machine that effects this computation is then referred to as an index of the presentation.  Again, an index of a Banach space presentation may be viewed as a finite description of the space. 
This approach can be traced back at least to Pour El and Richards~\cite{PourElRich}.  See \cite{Brattka.Hertling.ea:08} for an excellent and reader-friendly introduction to the theory of computable Banach spaces.

  Although computable presentations of Banach spaces have been studied for several decades, the index set approach has not yet been applied to measure the complexity of the classification problem for standard subclasses of separable Banach spaces. 
Herein, we initiate the systematic study of index sets of classes of separable Banach spaces by focusing on the class of separable Lebesgue spaces (i.e. spaces that are $L^p$ spaces for some $p \geq 1$) and several of its natural subclasses.  Obviously, there are many natural questions beyond this class.
  
  \subsection{The results} 
  
 How hard is it to determine if a number indexes a presentation of a Lebesgue space?
More formally,  what is the complexity of the index set 
\[
I_{Leb} = \{e\ :\ \mbox{$e$ indexes a Lebesgue space presentation} \}? 
\]
 The complexity of this index set reflects how hard it is to \emph{characterize} or \emph{distinguish} Lebesgue spaces among all Banach spaces.
 For example, Hilbert spaces are characterized by the parallelogram law, which makes their index set merely $\Pi^0_2$ (see Lemma~\ref{lm:hilbert}).
  Is there any similar ``local'' law --  e.g., a first-order sentence --  which would capture the property of being a Lebesgue space?
 
 At first glance, the characterization problem $I_{Leb}$ seems to be no better than $\Sigma^1_1$, because we seemingly have to search for an isomorphism $f$ which may not be computable.  Indeed, this upper bound is also suggested by the characterization of $L^p$ spaces via Banach lattice relations due to Kakutani \cite{Kakutani.1941}. 
 If this crude upper bound was optimal this would imply that there is no reasonable ``local'' law which isolates Lebesgue spaces among all Banach spaces.
  Rather surprisingly, our first main result shows that the index set of Lebesgue spaces has a much lower complexity.

\begin{maintheorem}\label{mthm:Lebesgue}
  The set of all indices of Lebesgue space presentations is  $\Pi^0_3$. 
\end{maintheorem}
   
Main Theorem \ref{mthm:Lebesgue} implies that there must be a local property that distinguishes Lebesgue spaces.  What is this property?
Our proof implies that Lebesgue spaces are characterized among all Banach spaces by the success of an algorithm which attempts to build a \emph{formal disintegration} of the given space. The notion of a formal disintegration is a development of the earlier notion of disintegration~\cite{McNellp}, \cite{Clanin.McNicholl.Stull.2019}.  The associated independence property vaguely resembles $S$-independence~\cite{DoMel, DoMel1} in discrete completely decomposable groups, as well some other notions in the literature on discrete countable p-groups (c.f.~\cite{RogersAbel}).
  We believe that our proof of Main Theorem \ref{mthm:Lebesgue} has no analogy in computable analysis, while the only technical similarity with the above-mentioned results is the use of some independence notion.

  Although we do not know if the upper bound $\Pi^0_3$ is tight when the exponent $p \geq 1$ is not known, when $p$ is held fixed
we achieve a tight upper bound.

 \begin{maintheorem}\label{mthm:Lp}
  Suppose $p \geq 1$ is a computable real.  Then, the set of all indices of $L^p$ space presentations 
is $\Pi_2^0$-complete.
\end{maintheorem}

The simpler proof of Main Theorem \ref{mthm:Lp} will be given before the proof of Main Theorem \ref{mthm:Lebesgue} (they reappear as
 Theorems~\ref{thm:Lp} and~\ref{thm:pres.Lebesgue}, respectively).   The situation resembles the main results in \cite{DoMel1} where the completeness of the $\Sigma^0_7$-upper bound for completely decomposable groups is not known, but the closely related $\Delta^0_5$-categoricity bound used to establish this $\Sigma^0_7$ estimate is provably optimal.
We leave open: \emph{Is the upper bound from Main Theorem \ref{mthm:Lebesgue} tight?} 
 The difficulty that we faced in our attempts resolve this question is the lack of a procedure for computing the exponent of a Lebesgue space from an index of one of its presentations.  This issue will be 
 discussed in a forthcoming paper by the second author.  
  We suspect that new insights into continuous definability in Lebesgue spaces are required to settle these two closely related questions.  
  The proof of Main Theorem \ref{mthm:Lebesgue} sidesteps this difficulty by means of a formula due to O. Hanner \cite{Hanner.1956} for the modulus of uniform convexity of an $L^p$ space.  
 
 \medskip

We conclude our discussion with an optimal analysis of index set complexity for each individual isometric isomorphism type of $L^p$-spaces when a computable $p \geq 1$ other than $2$ is held fixed. For example, Theorem~\ref{thm:known.exp}.\ref{thm:known.exp.lpnLp01} says that for each $n \geq 1$, the set of all indices of presentations of $\ell^p_n \oplus_p L^p[0,1]$ is 
	$d$-$\Sigma_2^0$-complete.  Theorem~\ref{thm:known.exp} consists of six parts and is a bit too lengthy to be stated here; we therefore postpone its complete formulation until Section 5.
	The proof of Theorem~\ref{thm:known.exp} implements an effective functor which transforms a linear order into a measure space preserving some properties of interest; see, e.g., Corollary   \ref{cor:faithful} after Definition~\ref{def:faithful}. In several cases the functor will allow us to work with a linear order and then transform it into a Lebesgue space, therefore significantly reducing the combinatorial complexity of some parts of the proof.  This idea may lead to new applications beyond the study of index sets.

Finally, we investigate the isometric isomorphism problem for $L^p$ spaces.  Specifically, we prove the following.

 \begin{maintheorem}\label{mthm:isom} Suppose $p \geq 1$ is a computable real other than $2$.  Then, the set of all pairs $(e,e')$ so that $e,e'$ index presentations of isometrically isomorphic $L^p$ spaces is co-$3$-$\Sigma_3^0$-complete.
 \end{maintheorem}
 
Again, at first glance, the bound given by Main Theorem \ref{mthm:isom} is quite a bit lower than what might be expected as it implies that there is no need to search for an isometric isomorphism and that one may rely instead on a local property.  In fact, this property is the classical characterization of separable $L^p$ spaces (see Theorem \ref{thm:classification} below and also \cite{Cembranos.Mendoza.1997}).  To the best of our knowledge, this is the first natural example of an index set at this particular level of the relativized Ershov hierarchy.  The proof of Main Theorem \ref{mthm:isom} combines the techniques developed in the proof of Main Theorem \ref{mthm:Lp} and the linear order functor alluded to previously.

We now proceed to summarize relevant background from functional and computable analysis.

\section{Background}\label{sec:back}

\subsection{Background from functional analysis}\label{sec:back::subsec:FA}

Let $\F$ denote the field of scalars.  This can be either $\R$ or $\C$.  

When $V_0$ and $V_1$ are vectors spaces, let $V_0 \oplus V_1$ denote their external 
direct product.  Suppose $\B_0$ and $\B_1$ are Banach spaces.  
Then, $\B_0 \oplus_p \B_1$ consists of the vector space $\B_0 \oplus \B_1$ together with the 
norm defined by 
\[
\norm{(v_0,v_1)}^p = \norm{v_0}_{\B_0}^p + \norm{v_1}_{\B_1}^p.
\]
$\B_0 \oplus_p \B_1$ is called the \emph{$L^p$-sum} of $\B_0$ and $\B_1$.  
When $\{\B_j\}_{j \in \N}$ is a sequence of Banach spaces, the $L^p$-sum of $\{B_j\}_{j \in \N}$ is defined to be the set of all 
functions $f$ in the infinite Cartesian product $\prod_j \B_j$ so that 
$\sum_j \norm{f}_{\B_j}^p < \infty$; we denote this sum by $\oplus_p \{B_j\}_{j \in \N}$.  
The $L^p$ sum of a sequence of Banach spaces is again a Banach space under component-wise vector addition and scalar multiplication and with the norm defined by 
\[
\norm{f}^p = \sum_j \norm{f(j)}_{\B_j}^p.
\]

It is well-known that every nonzero separable $L^2$ space is isometrically isomorphic either to $\ell^2$ or $\ell^2_n$ for some $n$.  For $L^p$ spaces with $p \neq 2$, we rely extensively on the following classification, a proof of which can be found in \cite{Cembranos.Mendoza.1997}.  

\begin{theorem}[Classification of separable $L^p$ spaces]\label{thm:classification}
Suppose $1 \leq p < \infty$ and $p \neq 2$.  
Then, every nonzero separable $L^p$ space is isometrically isomorphic to exactly one of the following.
\begin{enumerate}
	\item $\ell^p_n$ for some $n \geq 1$.  In this case, the underlying measure space is purely atomic and has exactly $n$ atoms.
	
	\item $\ell^p$.  In this case, the underlying measure space is purely atomic and has $\aleph_0$ atoms.
	
	\item $L^p[0,1]$.  In this case, the underlying measure space is non-atomic.
	
	\item $\ell^p_n \oplus_p L^p[0,1]$ for some $n \geq 1$.  In this case, the underlying measure space
	has exactly $n$ atoms but is not purely atomic.
	
	\item $\ell^p \oplus_p L^p[0,1]$.  In this case, the underlying measure space
	has $\aleph_0$ atoms but is not purely atomic.
\end{enumerate}
\end{theorem}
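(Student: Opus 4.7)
The plan is to decompose the underlying measure space into its purely atomic and non-atomic parts, identify each piece explicitly, and then invoke a rigidity theorem to separate the five listed cases when $p \neq 2$. First I would use separability of $L^p(\Omega,\Sigma,\mu)$ to conclude that the associated measure algebra is separable under the Fr\'echet--Nikodym metric. This forces the set of atoms of $\mu$ to be at most countable and makes the non-atomic part $\sigma$-finite with separable measure algebra. Writing $\Omega = \Omega_a \sqcup \Omega_c$ as the disjoint union of its purely atomic and non-atomic parts then yields a canonical isometric decomposition $L^p(\Omega) \cong L^p(\Omega_a) \oplus_p L^p(\Omega_c)$, with the convention that an empty summand contributes nothing.

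Next I would identify each summand. For $L^p(\Omega_a)$, enumerating the atoms $\{A_j\}$ with masses $m_j := \mu(A_j) > 0$ and sending $f \mapsto (f(A_j)\, m_j^{1/p})_j$ gives an isometry onto $\ell^p_n$ when there are $n$ atoms and onto $\ell^p$ when there are $\aleph_0$ many. For $L^p(\Omega_c)$, I would invoke the Carath\'eodory--Maharam classification of separable non-atomic measure algebras: any such algebra is isomorphic to the measure algebra of $([0,1], \mathrm{Leb})$, and this Boolean isomorphism lifts to an isometric isomorphism $L^p(\Omega_c) \cong L^p[0,1]$ after a rescaling exploiting $\sigma$-finiteness. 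Combining the two summands, every nonzero separable $L^p$ space has underlying measure-theoretic data falling into exactly one of the five shapes listed.

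The main obstacle is uniqueness: the five cases must be shown pairwise non-isometric when $p \neq 2$. The natural tool is Lamperti's theorem, which asserts that for $p \neq 2$ every linear isometry between $L^p$ spaces is a weighted composition operator arising from an isomorphism of the underlying measure algebras. Consequently atoms must map to atoms and the non-atomic part must map to the non-atomic part, so both the number of atoms and the presence or absence of a non-atomic continuous component are isometric invariants. Inspecting these invariants in each of the five cases separates them pairwise. Since the Lamperti step is delicate and well documented in the literature, I would cite it rather than reprove it, leaving the remaining bookkeeping to routine measure algebra.
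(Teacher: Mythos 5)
Your proposal is essentially correct, but note that the paper does not prove this theorem at all: it is quoted as a known classification with a citation to Cembranos and Mendoza, and your sketch is precisely the standard argument given there (atomic/non-atomic splitting into an $\oplus_p$-sum, identification of the atomic part with $\ell^p_n$ or $\ell^p$ via $f \mapsto (f(A_j)\mu(A_j)^{1/p})_j$, Carath\'eodory--Maharam for the non-atomic part, and the Banach--Lamperti description of isometries for $p \neq 2$ to make the atom count and pure atomicity isometric invariants). The only points worth making explicit in a full write-up are the reduction of a $\sigma$-finite non-atomic measure to a normalized one by a change of density before invoking the measure-algebra isomorphism with $[0,1]$, and the verification that the five cases realize pairwise distinct values of the invariant pair (number of atoms, purely atomic or not); you have flagged both, so there is no gap.
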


\subsection{Background from computable analysis}\label{sec:back::subsec:CA}

Let $\mathcal{B}$ be a Banach space.  A set of vectors $S\subseteq \B$ is said to be \textit{linearly dense in $\mathcal{B}$} if $\mathcal{B}$ is the closure of the linear span of the vectors in $S$.

\begin{definition}
A function $R : \N \rightarrow \mathcal{B}$ is a \emph{structure} on $\mathcal{B}$ if its range is linearly dense in $\mathcal{B}$. 
 If $R$ is a structure on $\mathcal{B}$, then $(\mathcal{B}, R)$ is a \emph{presentation} of $\mathcal{B}$ and $R(n)$ is called the \emph{$n$-th distinguished point} of $(\mathcal{B}, R)$.  
\end{definition}
Thus, to define a presentation of a Banach space, it suffices to specify the distinguished points.  
If $\B^\#$ is a Banach space presentation, then each object associated with $\B$ is also naturally associated with $\B^\#$.  We will therefore sometimes identify a structure on a Banach space with the associated presentation.

A Banach space may have a presentation that is designated as \emph{standard}; such a space is identified with its standard presentation.  The standard presentations of the $L^p$ spaces are defined as follows.  To begin, the standard presentation of $\ell^p$ 
is defined by taking the $n$-th distinguished point to be $e_n$ (the $n$-th standard basis vector).
The standard presentation of $L^p[0,1]$ is defined by taking the $n$-th distinguished point to be the indicator function of the $n$-th dyadic interval (in a standard enumeration).
The standard presentations of $\ell^p_n$, $\ell_n^p \oplus_p L^p[0,1]$, and $\ell^p \oplus_p L^p[0,1]$ are formed similarly.

A presentation of a Banach space induces associated classes of rational vectors and rational balls as follows.  Recall that $\F =\R$ or $\C$. Let $\F_\Q = \F \cap \Q(i)$; we refer to the elements of $\F_\Q$ as \emph{rational scalars}.
Suppose $\mathcal{B}^\#$ is a presentation of $\mathcal{B}$.  We say $v \in \mathcal{B}$ is a \emph{rational vector} of $\mathcal{B}^\#$ if it is a rational linear combination of distinguished points of $\B^\#$; that is if $v$ can be written as $\sum_{j = 1}^n \alpha_j u_j$ where each $\alpha_j$ is a rational scalar and each $u_j$ is a distinguished vector.  A \emph{rational open ball of $\mathcal{B}^\#$} is an open ball whose center is a rational vector of $\mathcal{B}^\#$ and whose radius is a positive rational number.  

We can define a coding of the rational vectors of $\B^\#$ by means of a G\"odel numbering of the formal expressions that represent rational vectors.  Accordingly, let $\Rat(\B^\#;n)$ denote the $n$-th rational vector of $\B^\#$ under this coding.  This coding has the feature that for all $\alpha \in \F_\Q$ and $m,n \in \N$, a code of $\alpha \Rat(\B^\#;m) + \Rat(\B^\#;n)$ can be computed from $m,n,\alpha$ independently of $\B^\#$.  
Similarly, we can code rational open balls in such a way that from a code of a rational ball we can compute its radius and a code of its center independently of the presentation.  Let $B(\B^\#;n)$ denote the $n$-th rational ball of $\B^\#$ under this coding, and let $I_n = B(\R, n)$.  

The definition below is standard; see e.g., \cite{PourElRich}.

\begin{definition}
We say that a presentation $\mathcal{B}^\#$ of a Banach space is \emph{computable} if the norm functional is computable on the set of rational vectors of $\mathcal{B}^\#$. 
\end{definition}
 That is, $\mathcal{B}^\#$ is computable if there is an algorithm that given a (code of a) rational vector $v$ of $\mathcal{B}^\#$ and a nonnegative integer $k$, computes a rational number $q$ so that $|q - \norm{v}| < 2^{-k}$.  An index of such an algorithm is referred to as an 
\emph{index} of $\mathcal{B}^\#$.  
Clearly,  if $\mathcal{B}_0$ and $\mathcal{B}_1$ are isometrically isomorphic, then every index of a presentation of $\mathcal{B}_0$ is also an index of a presentation of $\mathcal{B}_1$.  

Recall that a Polish metric space is computably presentable if it possesses a dense countable sequence upon which the distance function is uniformly computable~\cite{MelIso}.
A computable presentation of a Banach space can be also viewed as a computable presentation of a Polish space (under the metric induced by the norm) with respect to which the standard Banach space operations become uniformly computable operators; see \cite{MelIso}. The latter can be taken for an equivalent definition of a computable structure on a Banach space. 
  Note that in this setting, computability of the norm does not necessarily imply computability of the operations; see \cite{MelNg} for a detailed analysis of this phenomenon.

\begin{definition}\label{def:diagram.name}
If $\B^\#$ is a presentation of a Banach space, then the \emph{diagram} of $\B^\#$ consists 
of all pairs $(m,n)$ so that $\norm{\mathcal{R}(\B^\#; m)}_\B \in I_n$.   
\end{definition}

Note that a presentation is computable if and only if its diagram  is a computably enumerable set. 
By means of a standard coding, we may identify the diagram of a Banach space presentation with a set of natural numbers.  

In order to state our results about index sets in a highly uniform manner, we introduce names of Banach space presentations as follows.  A \emph{name} of a Banach space presentation $\B^\#$ is a function $f \in \N^\N$ that enumerates the diagram of $\B^\#$.  Note that if $\B_0$ and $\B_1$ are isometrically isomorphic, then any name of a presentation of $\B_0$ is also a name of a presentation of $\B_1$.  

Names and indices of presentations are related as follows.  There is a computable $f : \N \rightarrow \N$ so that for all $e \in \N$, $\phi_{f(e)}$ names a Banach space presentation (possibly the zero space) and if $e$ indexes a Banach space presentation $\B^\#$, then this presentation is named by 
$\phi_{f(e)}$.  Thus any result we prove about complexity of name sets immediately yields the same result mutatis mutandis about the complexity of index sets.

We conclude this section by pinning down the complexity of the set of all names of Banach space presentations.  We first prove the following lemma which will be useful later as well.  

\begin{lemma}\label{lm:Banach.complete}
Suppose $\B^\#$ is a computable Banach space presentation, and assume $P \subseteq \N^\N$ is $\Pi_2^0$.  Then, there is a computable operator $F : \N^\N \rightarrow \N^\N$ so that 
for every $f \in \N^\N$, $F(f)$ names $\B^\#$ if $P(f)$, and otherwise $F(f)$ names no Banach space presentation.
\end{lemma}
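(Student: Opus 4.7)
The plan is to enumerate the diagram of $\B^\#$ ``at the pace of the $\Pi_2^0$ predicate $P$'', so that enumeration of the full diagram succeeds exactly when $P(f)$ holds. First I would fix a computable total enumeration $A\colon \N \to \N^2$ of the diagram of $\B^\#$ (uniformly obtainable since the presentation is computable, hence its diagram is c.e.\ and, as will be noted, infinite). Second, since $P$ is $\Pi_2^0$, I would fix a decidable predicate $R$ such that
\[
P(f) \iff \forall k\, \exists s\, R(f\upharpoonright s,\, k).
\]

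Next I would introduce an approximation counter $k^*(f,s)$ defined as the largest $k \leq s$ such that for every $i<k$ there exists $s' \leq s$ with $R(f\upharpoonright s', i)$. This quantity is computable from $f\upharpoonright s$ and a decision procedure for $R$, is nondecreasing in $s$, and satisfies $k^*(f,s)\to\infty$ precisely when $P(f)$ holds (if some $i$ has no witness, then $k^*$ stalls at $i$; otherwise it eventually exceeds every bound). The computable operator would then be defined simply by
\[
F(f)(s) \;=\; A\bigl(k^*(f,s)\bigr),
\]
which is computable uniformly in $s$ from the oracle $f$.

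For the verification: when $P(f)$ holds, $k^*(f,s)\to\infty$, so each value $A(k)$ is eventually output and the range of $F(f)$ equals the full diagram of $\B^\#$, hence $F(f)$ names $\B^\#$. When $P(f)$ fails, $k^*(f,\cdot)$ is eventually constant at some finite $K$, so the range of $F(f)$ equals the finite set $\{A(0),\ldots,A(K)\}$. The one observation that makes this suffice --- and really the only nontrivial ingredient --- is that \emph{every} Banach space presentation has an infinite diagram: the code $m_0$ of the zero vector satisfies $\|\Rat(\B'^\#;m_0)\|=0$ in every presentation $\B'^\#$, and infinitely many rational balls $I_n$ contain $0$, so $(m_0,n)$ lies in every diagram for infinitely many $n$. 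Hence a finite set cannot be the diagram of any Banach space presentation, and $F(f)$ is not a name of any such presentation when $\neg P(f)$. I do not anticipate a real obstacle here; the only subtle point is recognizing that the finite ``truncation'' in the failure case is already enough garbage to block $F(f)$ from being a valid name, which is exactly what the infinite-diagram observation provides.
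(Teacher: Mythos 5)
Your overall strategy is the same as the paper's: observe that the diagram $D$ of $\B^\#$ is infinite (your justification via the zero vector and the infinitely many rational intervals containing $0$ is exactly the right reason), put $P$ into a normal form, and arrange for $F(f)$ to enumerate all of $D$ when $P(f)$ holds and only a finite subset of $D$ otherwise, so that in the latter case $F(f)$ cannot name any Banach space presentation. The paper leaves the construction at the level of ``straightforward,'' working from the equivalent normal form $P(f)\Leftrightarrow\exists^\infty x\,Q(f;x)$; your counter $k^*(f,s)$ is a legitimate substitute, since it is computable from a finite initial segment of $f$, is nondecreasing in $s$, and is unbounded exactly when $P(f)$ holds.

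There is, however, a genuine bug in the step where you conclude that the range of $F(f)$ is all of $D$ when $P(f)$ holds. You set $F(f)(s)=A(k^*(f,s))$ and assert that ``each value $A(k)$ is eventually output.'' That fails because $k^*$ need not increase in steps of $1$: at stage $s+1$ the witnesses for $R$ may appear simultaneously for many indices $i$ at once, so $k^*(f,s+1)$ can exceed $k^*(f,s)$ by an arbitrary amount, and since $k^*$ is nondecreasing any skipped index is never revisited. The range of $F(f)$ is only $A\left[\{k^*(f,s):s\in\N\}\right]$, which can omit infinitely many elements of $D$ even when $P(f)$ holds (for instance when $A$ is injective), in which case $F(f)$ would fail to name $\B^\#$. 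The repair is routine: let $F(f)(s)=A(j_s)$ where $j_s$ is the number of stages $s'<s$ at which $k^*(f,s'+1)>k^*(f,s')$. Then the range of $F(f)$ is $\{A(0),\dots,A(N)\}$ with $N$ the total number of increases of $k^*$, and this equals $D$ if and only if $k^*$ is unbounded, i.e.\ if and only if $P(f)$; otherwise it is a finite subset of $D$, and your infinite-diagram observation finishes the proof. With that one-line change the argument is complete and coincides with the paper's.
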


\begin{proof}
Let $D$ denote the diagram of $\B^\#$.  
Note that $D$ is infinite.  
Fix a computable predicate $Q \subseteq \N^\N \times \N$ so that for all $f \in \N^\N$, 
\[
P(f) \Leftrightarrow \exists^\infty x Q(f; x).
\]
It is straightforward to construct a computable $F : \N^\N \rightarrow \N^\N$ so that for all $f \in \N^\N$, $F(f)$ enumerates $D$ if $P(f)$ and otherwise enumerates a finite subset of $D$. 
\end{proof}

\begin{theorem}\label{thm:Banach.complete}
The set of all names of Banach space presentations is $\Pi_2^0$ complete.
\end{theorem}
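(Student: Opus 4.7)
The plan is to verify both $\Pi_2^0$-membership and $\Pi_2^0$-hardness, the latter being essentially delivered by Lemma~\ref{lm:Banach.complete} and the former requiring a careful unpacking of the conditions that characterize the diagram of a Banach space presentation.

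For the upper bound, I would characterize $f \in \N^\N$ being a name of a Banach space presentation as the conjunction of the following clauses, each at worst $\Pi_2^0$. Writing $A_m = \{n : (m,n) \in \operatorname{range}(f)\}$, the clauses are: (i) for each $m$, the intervals $I_n$ with $n \in A_m$ pairwise intersect and each meets $[0,\infty)$; (ii) for each $m$ and $k$ some $n \in A_m$ satisfies $|I_n| < 2^{-k}$, so that $A_m$ pins down a unique real $r_m \geq 0$; (iii) $A_m$ is saturated, i.e.\ whenever $I_{n'} \subseteq I_n$ and $n' \in A_m$ one also has $n \in A_m$; (iv) whenever the formal expressions coded by $m_1$ and $m_2$ are equivalent by the vector-space axioms, $r_{m_1} = r_{m_2}$; and (v) the seminorm identities $r_m = |\alpha|\, r_{m_1}$ when $m$ codes $\alpha v$ and $m_1$ codes $v$, and $r_m \leq r_{m_1} + r_{m_2}$ when $m$ codes $v + w$, $m_1$ codes $v$, $m_2$ codes $w$, hold. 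Given that $A_m$ is c.e.\ in $f$, each of ``$r_m < q$'' and ``$r_m > q$'' for rational $q$ is $\Sigma_1^0$ in $f$, so the inequalities and equalities in (iv) and (v) become $\Pi_1^0$ after universally quantifying over the relevant rationals; (i) is $\Pi_1^0$, (ii) is $\Pi_2^0$, and (iii) is $\Pi_2^0$. The countable conjunction of $\Pi_2^0$ conditions is still $\Pi_2^0$.

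For the lower bound, fix any concrete computable Banach space presentation $\B^\#$ --- for instance the standard presentation of $\R$ --- and any $\Pi_2^0$-complete set $P \subseteq \N^\N$. Applying Lemma~\ref{lm:Banach.complete} to this $\B^\#$ and $P$ produces a computable operator $F$ such that $F(f)$ names $\B^\#$ whenever $P(f)$ holds and otherwise $F(f)$ names no Banach space presentation at all. Hence $f \in P$ if and only if $F(f)$ is a name of a Banach space presentation, and $F$ is a computable many-one reduction from $P$ to the target set, completing the hardness argument. The main technical obstacle lies entirely in the upper bound bookkeeping --- the saturation clause (iii) and the compatibility equalities in (iv) must be phrased carefully to keep the quantifier complexity within $\Pi_2^0$. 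Once the $\Sigma_1^0$ access to comparisons between $r_m$ and rationals is established from (i) and (ii), these verifications are routine.
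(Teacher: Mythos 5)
Your proposal matches the paper's proof: the lower bound is obtained exactly as in the paper by applying Lemma~\ref{lm:Banach.complete} to a fixed computable presentation, and your upper bound is the same idea as the paper's --- recognizing that $f$ names a presentation precisely when it consistently induces a nonnegative, well-defined seminorm on the space of formal rational linear combinations --- written out with explicit clauses instead of the paper's $\eta_f^{+}$, $\eta_f^{-}$ functionals. The one point to phrase carefully (which the paper's sketch also glosses over) is that a name must enumerate the diagram \emph{exactly}: clauses (i)--(iii) as stated still admit sets such as $A_m=\{n: I_n=(a,b),\ a\le r_m<b\}$ for rational $r_m$, which properly contain $\{n: r_m\in I_n\}$, so you should add the (still $\Pi_2^0$, given your $\Sigma_1^0$ access to comparisons of $r_m$ with rationals) clause that every $(m,n)\in\ran(f)$ actually satisfies $r_m\in I_n$.
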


\begin{proof}[Proof sketch:] 
The lower bound is established by Lemma \ref{lm:Banach.complete}.
A code of a rational vector may be regarded as a code of a finite sequence of rational scalars which in turn may be regarded as a vector in $C_{00}(\F_\Q)$ (the set of all finitely-supported infinite sequences of rational scalars).  The key idea to obtaining the upper bound is to observe that $f \in \N^\N$ names a Banach space presentation if and only if it induces a seminorm on $C_{00}(\F_\Q)$.  To be more precise, for each $f \in \N^\N$ and $g \in C_{00}(\F_\Q)$, let:
\begin{eqnarray*}
\eta_f^+(g) & = & \inf\{r \in \Q\ :\ \exists n \in \N\ \langle \langle g \rangle, n\rangle \in \ran(f)\ \wedge\ \max(I_n) < r\}\\
\eta_f^-(g) & = & \sup\{r \in \Q\ :\ \exists n \in \N \ \langle  \langle g \rangle, n\rangle \in \ran(f)\ \wedge\ r < \min(I_n)\}\\
\end{eqnarray*}
(Here $\langle, \rangle$ denotes a standard effective coding of $C_{00}(\F_\Q)$.) 
It is fairly straightforward to verify that $f$ names a Banach space presentation if and only if
$\eta^+_f = \eta_f^-$ and $\eta_f := \eta_f^+$ is a seminorm on $C_{00}(\F_\Q)$ (in which case $f$ names the completion of the quotient space).
\end{proof}

\section{Preliminaries}\label{sec:prelim} 

\subsection{Formal disintegration}\label{subsec:dis} We introduce a new notion of independence which will be crucial throughout the rest of the paper.

\begin{definition}
Suppose $1 \leq p < \infty$, $\mathcal{B}$ is a Banach space, and $v_1, \ldots, v_n \in \mathcal{B}$. We say $v_1, \ldots, v_n$ are \emph{$L^p$-formally disjointly supported} if 
	\[
	\norm{\sum_j \alpha_j v_j}_\B^p = \sum_j |\alpha_j|^p \norm{v_j}_\B^p
	\]
	for all scalars $\alpha_1, \ldots, \alpha_n$.
If $f_1, \ldots, f_n \in L^p(\Omega)$ are disjointly supported, then they are $L^p$-formally disjointly supported. 
We say that $u$ is an \emph{$L^p$-formal component} of $v$ if 
$v - u$ and $u$ are $L^p$-formally disjointly supported; in this case we write 
$u \preceq v$.  
\end{definition}

In 1958, J. Lamperti proved the following remarkable result.

\begin{theorem}\label{thm:lamperti}
Suppose $1 \leq p < \infty$ and $p \neq 2$.  If $f,g$ are vectors in an $L^p$ space, then $f$ and $g$ are disjointly supported if and only if $\norm{f + g}_p^p + \norm{f - g}_p^p = 2(\norm{f}_p^p + \norm{g}_p^p)$.  
\end{theorem}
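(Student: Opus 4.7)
The forward direction is immediate. If $f$ and $g$ have disjoint supports, then at almost every $\omega$ one of $f(\omega), g(\omega)$ vanishes, so $|f(\omega) \pm g(\omega)|^p = |f(\omega)|^p + |g(\omega)|^p$ pointwise; integrating gives the stated identity. The substance is the converse, which I propose to reduce to the pointwise statement
\begin{equation*}
\Phi(a,b) := |a+b|^p + |a-b|^p - 2(|a|^p + |b|^p)
\end{equation*}
has \emph{constant} sign on $\F \times \F$ (positive for $p>2$, nonpositive for $1 \le p < 2$) and vanishes iff $ab = 0$. Granting this, the hypothesis says $\int \Phi(f,g)\, d\mu = 0$, and constant sign forces $\Phi(f(\omega), g(\omega)) = 0$ a.e., hence $f(\omega)g(\omega) = 0$ a.e., which is disjoint support.

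To prove the pointwise claim I would change variables. Set $r = |a|$, $s = |b|$, $u = 2\Re(a\bar b)$, so $|u| \le 2rs$ and $|a \pm b|^2 = r^2 + s^2 \pm u$. Then
\begin{equation*}
\Phi(a,b) \;=\; h(u) - 2(r^p + s^p), \qquad h(u) := (r^2+s^2+u)^{p/2} + (r^2+s^2-u)^{p/2}.
\end{equation*}
On $(-2rs, 2rs)$ I compute $h'(u) = \tfrac{p}{2}\bigl[(r^2+s^2+u)^{p/2-1} - (r^2+s^2-u)^{p/2-1}\bigr]$, which vanishes precisely at $u = 0$. Monotonicity of $t \mapsto t^{p/2-1}$ (increasing for $p>2$, decreasing for $1 \le p < 2$) shows that $u = 0$ is a strict minimum of $h$ on $[-2rs, 2rs]$ when $p > 2$ and a strict maximum when $1 \le p < 2$, provided $rs > 0$.

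At $u = 0$ the value is $h(0) = 2(r^2+s^2)^{p/2}$, and the strict power-mean inequality gives $(r^2 + s^2)^{p/2} > r^p + s^p$ for $p>2$ and $(r^2 + s^2)^{p/2} < r^p + s^p$ for $1 \le p < 2$, with equality in either case iff $rs = 0$. Chaining the two inequalities in the same direction yields $\Phi(a,b) \ge 0$ throughout $[-2rs, 2rs]$ when $p > 2$ and $\Phi(a,b) \le 0$ when $1 \le p < 2$. Moreover $\Phi(a,b) = 0$ forces equality in the power-mean step, hence $rs = 0$, i.e.\ $ab = 0$; conversely if $ab = 0$ then $\Phi(a,b) = 0$ by direct computation.

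The principal technical obstacle is orchestrating the two extremization steps so that their signs match in both regimes $p>2$ and $1 \le p < 2$, and in particular handling the endpoint $p = 1$ (where $h'$ can blow up at $u = \pm 2rs$ when $r = s$, so one must appeal to continuity of $h$ on the closed interval rather than to the derivative alone) as well as the degenerate case $rs = 0$ where the interval collapses. Once the pointwise inequality $\Phi \gtreqless 0$ and its equality characterization are in hand, the integration argument closing the converse is immediate.
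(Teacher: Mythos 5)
The paper states this result without proof, citing it as Lamperti's 1958 theorem, so there is no in-paper argument to compare against. Your proof is correct and is essentially the classical one: reducing to the pointwise scalar inequality that $\Phi(a,b)$ has constant sign (nonnegative for $p>2$, nonpositive for $1\le p<2$) with equality precisely when $ab=0$, and then integrating a function of constant sign, is exactly how Lamperti's lemma is standardly established. The two-step extremization --- locating the extremum of $h$ at $u=0$ and then invoking super-/subadditivity of $t \mapsto t^{p/2}$, with the equality case of the latter forcing $rs=0$ --- is sound in both regimes, and you correctly isolate the only delicate points (the blow-up of $h'$ at the endpoints when $p=1$ and $r=s$, and the degenerate case $rs=0$ where the interval collapses).
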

Thus, if $p \neq 2$, then $L^p$-formally disjointly supported vectors in $L^p(\Omega)$ are disjointly supported.  Another consequence of Lamperti's results is the following. 

\begin{theorem}\label{thm:disj.supp}
If $1 \leq p < \infty$, and if $p \neq 2$, then every linear isometric map from an $L^p$ space to an $L^p$ space preserves disjointness of support.
\end{theorem}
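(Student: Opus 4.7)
The plan is to observe that Lamperti's theorem (Theorem \ref{thm:lamperti}) turns the property ``disjointly supported'' into a purely norm-theoretic condition, and any such condition is automatically preserved by linear isometries. So the proof is essentially a transport of the Lamperti identity across the map.

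Concretely, let $T : L^p(\Omega_0) \to L^p(\Omega_1)$ be a linear isometric map, and suppose $f, g \in L^p(\Omega_0)$ are disjointly supported. First I would apply the ``only if'' direction of Lamperti's theorem to obtain
\[
\norm{f + g}_p^p + \norm{f - g}_p^p = 2\bigl(\norm{f}_p^p + \norm{g}_p^p\bigr).
\]
Next, using linearity of $T$ I have $T(f \pm g) = Tf \pm Tg$, and using the fact that $T$ is isometric I have $\norm{T(f\pm g)}_p = \norm{f\pm g}_p$ and $\norm{Tf}_p = \norm{f}_p$, $\norm{Tg}_p = \norm{g}_p$. Substituting these into the displayed equation gives
\[
\norm{Tf + Tg}_p^p + \norm{Tf - Tg}_p^p = 2\bigl(\norm{Tf}_p^p + \norm{Tg}_p^p\bigr).
\]
Finally I would invoke the ``if'' direction of Lamperti's theorem (which requires $p \neq 2$) to conclude that $Tf$ and $Tg$ are disjointly supported in $L^p(\Omega_1)$.

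There is no real obstacle here: the work has all been done by Lamperti. The only point that deserves a sentence of comment is that the hypothesis $p \neq 2$ is used precisely (and only) when invoking the converse direction of Lamperti's theorem, since for $p = 2$ the parallelogram law makes the Lamperti identity hold for \emph{all} pairs of vectors and so the argument would collapse. This is of course consistent with the fact that on $L^2$ arbitrary unitaries exist which scramble supports.
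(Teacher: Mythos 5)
Your proof is correct and is exactly the argument the paper intends: the paper states this theorem without proof as ``another consequence of Lamperti's results,'' and the intended justification is precisely the transport of the norm identity of Theorem \ref{thm:lamperti} across the linear isometry, using the forward direction on $f,g$ and the converse direction (where $p \neq 2$ is needed) on $Tf, Tg$. Your closing remark about why the argument collapses at $p=2$ is also accurate.
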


We now generalize some of the definitions from \cite{McNellp} and \cite{Clanin.McNicholl.Stull.2019}.  Suppose $\mathcal{B}$ is a Banach space.  A \emph{vector tree} of $\B$ is an injective map 
$\phi : \subseteq \omega^{<\omega} \rightarrow \B$ so that $\dom(\phi)$ is a tree. 
Let $\phi$ be a vector tree of $\B$, and let $S = \dom(\phi)$.   Each $v \in \ran(\phi)$ is referred to as a \emph{vector of $\phi$}.   If $v \not \in \ran(\phi)$, then we say $\phi$ \emph{omits $v$}.  We say that:

\begin{itemize}
	\item  $\phi$ is \emph{summative} if for every $\nu \in S$, $\phi(\nu) = \sum_{\nu'} \phi(\nu')$ where $\nu'$ ranges over all the children of $\phi$ in $S$;

	\item  $\phi$ is \emph{$L^p$-formally separating} if 
$\phi(\nu_0), \ldots, \phi(\nu_n)$ are $L^p$-formally disjointly supported whenever $\nu_0, \ldots, \nu_n \in S$ are incomparable.

	\item $\phi$ is a \emph{$L^p$-formal disintegration} of $\mathcal{B}$ if it is
summative, $L^p$-formally separating, omits $\zerovec$, and the range of $\phi$ is linearly dense.
\end{itemize}

We note that the empty map is the only disintegration of a Banach space whose only vector is its zero vector.  Disintegrations are the backbone of the analysis of computable presentations of $L^p$ spaces in \cite{McNellp}, \cite{Clanin.McNicholl.Stull.2019}, and \cite{Brown.McNicholl.2018} as well as the degrees of isometry of $\ell^p$ spaces \cite{McNicholl.Stull.2019}.  Informally, a disintegration of an $L^p$-space allows one to view the space as a tree.   Compare this to tree-bases of abelian groups \cite{RogersAbel} and tree-representations of Boolean algebras \cite{Gon}. The crucial difference of our notion above with these two notions is that we deal with uncountable normed spaces, while the former two notions are useful only for countable discrete groups and Boolean algebras, respectively.

By the above-mentioned result of Lamperti, if $p \neq 2$, then in $L^p(\Omega)$ the notion above becomes equivalent to the notion of disintegration introduced in \cite{McNellp} and \cite{Clanin.McNicholl.Stull.2019}.
 Accordingly, if $\mathcal{B}$ is an $L^p$ space, then we omit the adjective `$L^p$-formal' from these terms. 
 The main advantage of our new notion is that it does not refer to the measure space at all and therefore it makes sense for an arbitrary Banach space.

\subsection{Properties of formal disintegrations}

Note that a disintegration $\psi$ of an $L^p$ space is \emph{antitone} in the following sense: if 
$\nu_0, \nu_1 \in \dom(\psi)$, and if $\nu_0 \subseteq \nu_1$, then $\psi(\nu_1) \preceq \psi(\nu_0)$.  

The following definitions are from \cite{Clanin.McNicholl.Stull.2019} and \cite{McNellp} respectively.

\begin{definition}\label{def:isom}
Suppose $\phi_j$ is a vector tree of $\mathcal{B}_j$ for each $j\in\{0,1\}$.  Let $S_j = \dom(\phi_j)$.  A map $f : S_0 \rightarrow S_1$ is an \emph{isomorphism} of $\phi_0$ and $\phi_1$ if 
it is an order isomorphism (with respect to $\subseteq$) of $S_0$ onto $S_1$ and if $\norm{\phi_1(f(\nu))}_{\B_1} = \norm{\phi_0(\nu)}_{\B_0}$ for all $\nu \in S_0$.  
\end{definition}

\begin{definition}\label{def:anm.chain}
Suppose $\phi$ is a disintegration of an $L^p$ space, and let $S = \dom(\phi)$.  A chain $C \subseteq S$ is \emph{almost norm-maximizing} if whenever $\nu \in C$ is a nonterminal node of $S$, $C$ contains a child $\nu'$ of $\nu$ so that 
\[
\max_\mu \norm{\phi(\mu)}_p^p \leq \norm{\phi(\nu')}_p^p + 2^{-|\nu|}  
\]
where $\mu$ ranges over the children of $\nu$ in $S$.  
\end{definition}

The next proposition is from \cite{Clanin.McNicholl.Stull.2019}

\begin{proposition}\label{prop:subvectorLimitsExist} If $g_0 \succeq g_1 \succeq ...$ are vectors in an $L^p$ space, then $\lim_n g_n$ exists in the $L^p$-norm and is the $\preceq$-infimum of $\{g_0,g_1,...\}$.
\end{proposition}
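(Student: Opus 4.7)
The plan is to reduce the formal setting to the concrete measure-theoretic picture via Lamperti's theorem (Theorem~\ref{thm:lamperti}), and then run a standard dominated-convergence argument. For $p \neq 2$, $L^p$-formal disjoint support in $L^p(\Omega)$ coincides with actual disjoint support, so the hypothesis $g_{n+1} \preceq g_n$ means that $g_{n+1}$ and $g_n - g_{n+1}$ have disjoint measurable supports. Writing $A_{n+1} := \supp(g_{n+1})$, this forces $g_{n+1} = g_n \cdot \Ind_{A_{n+1}}$ almost everywhere. Iterating, I obtain a decreasing sequence of measurable sets $A_0 \supseteq A_1 \supseteq A_2 \supseteq \cdots$, with $A_0 = \supp(g_0)$ and $g_n = g_0 \cdot \Ind_{A_n}$ a.e. This is the crucial reduction from the abstract order-theoretic hypothesis to a concrete monotone sequence of measurable sets.

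Next, I would set $A_\infty = \bigcap_n A_n$ and $g_\infty := g_0 \cdot \Ind_{A_\infty}$. To establish $L^p$-convergence $g_n \to g_\infty$, I would invoke dominated convergence applied to $|g_n - g_\infty|^p = |g_0|^p \Ind_{A_n \setminus A_\infty}$, which is dominated by the integrable function $|g_0|^p$ and decreases pointwise a.e.\ to $0$ as $A_n \setminus A_\infty \downarrow \emptyset$.

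Finally, I would verify that $g_\infty$ is the $\preceq$-infimum of $\{g_n : n \in \N\}$. On one hand, $g_\infty \preceq g_n$ holds because $g_n - g_\infty = g_0 \cdot \Ind_{A_n \setminus A_\infty}$ and $g_\infty = g_0 \cdot \Ind_{A_\infty}$ have disjoint support. On the other, if $h \preceq g_n$ for every $n$, then a further application of Lamperti produces a measurable set $B_n \subseteq A_n$ with $h = g_0 \cdot \Ind_{B_n}$; since all these representations describe the same vector $h$, a common representative $B := \supp(h)$ is contained in every $A_n$ modulo null sets and hence in $A_\infty$, which gives $h \preceq g_\infty$.

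The only technical care needed is the bookkeeping of null sets when extracting the measurable sets $A_n$ and $B$ from Lamperti's theorem, since these representatives are unique only up to null sets; one must check that the representations $g_n = g_0 \cdot \Ind_{A_n}$ can be arranged consistently across $n$. I expect this to be a minor point rather than a real obstacle: once the measure-theoretic dictionary $g_n \leftrightarrow (g_0, A_n)$ is in place, the remainder is a routine invocation of dominated convergence together with a disjoint-support calculation.
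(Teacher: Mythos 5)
The paper does not actually prove this proposition --- it is imported verbatim from \cite{Clanin.McNicholl.Stull.2019} --- so there is no in-text argument to compare against; judged on its own, your proof is correct for $p \neq 2$. The reduction via Lamperti's theorem to $g_n = g_0 \cdot \Ind_{A_n}$ with $A_0 \supseteq A_1 \supseteq \cdots$ (modulo null sets) is sound, the dominated-convergence step is fine, and the verification that $g_0 \cdot \Ind_{\bigcap_n A_n}$ is the $\preceq$-infimum goes through as you describe; the null-set bookkeeping you flag really is only a minor nuisance. Two remarks. First, as stated the proposition does not exclude $p = 2$, and your appeal to Lamperti is unavailable there; this is harmless if $\preceq$ is read measure-theoretically (disjointness of supports of $u$ and $v-u$), which is the reading in the cited source and makes your argument work for every $p$ with no appeal to Lamperti at all, but it is worth saying explicitly which reading you are using. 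Second, the existence of the limit can be had more cheaply and without any measure theory: the differences $g_k - g_{k+1}$ are pairwise disjointly supported (they live on $A_k \setminus A_{k+1}$), so for $m < n$ one gets the telescoping identity $\norm{g_m - g_n}_p^p = \sum_{k=m}^{n-1} \norm{g_k - g_{k+1}}_p^p = \norm{g_m}_p^p - \norm{g_n}_p^p$, and since $(\norm{g_n}_p^p)_n$ is a bounded decreasing sequence of reals, $(g_n)_n$ is Cauchy. Your dominated-convergence route buys you the explicit identification of the limit as $g_0 \cdot \Ind_{A_\infty}$ in one stroke, which you then reuse for the infimum claim, so the extra machinery is not wasted --- but the norm computation is the more elementary path to convergence and is likely closer to the argument in the cited source.
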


The following theorem was first proven for $\ell^p$ spaces in \cite{McNellp} and generalized to arbitrary $L^p$ spaces in \cite{Brown.McNicholl.2018}.

\begin{theorem}\label{thm:limits.chains}
Suppose $\Omega$ is a measure space and $\phi: S \rightarrow L^p(\Omega)$ is a disintegration.\\
\begin{enumerate}
	\item If $C \subseteq S$ is an almost norm-maximizing chain, then the $\preceq$-infimum of $\phi[C]$ exists and is either \textbf{0} or an atom of $\preceq$.  Furthermore, the $\preceq$-infimum of $\phi[C]$ is the limit in the $L^p$ norm of $\phi(\nu)$ as $\nu$ traverses the nodes in $C$ in increasing order.\label{thm:limits.chains::itm:inf}
		
	\item If $\{C_n\}_{n < \kappa}$ is a partition of $S$ into almost norm-maximizing chains (where $\kappa \leq \omega$), then the $\preceq$-infima of $\phi[C_0], \phi[C_1], ...$ are disjointly supported.  Furthermore, if $A$ is an atom of $\Omega$, then there exists a unique $n$ so that $A$ is the support of the $\preceq$-infimum of $\phi[C_n]$. \label{thm:limits.chains::itm:unique} 
\end{enumerate}
\end{theorem}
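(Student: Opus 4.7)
My plan is to prove part (1) first and then build on it for part (2). For part (1), the $L^p$-limit assertion is immediate from Proposition~\ref{prop:subvectorLimitsExist} applied to the $\preceq$-decreasing sequence $\phi(\nu_0) \succeq \phi(\nu_1) \succeq \cdots$ along $C$; denote the resulting limit (automatically the $\preceq$-infimum of $\phi[C]$) by $g$. To show $g \in \{\mathbf{0}\} \cup \{\text{atoms of } \preceq\}$, I argue by contradiction. If $g$ is nonzero and non-atomic, split $g = u + v$ with $u, v$ disjointly supported and both nonzero; in particular $u$ is not a scalar multiple of $g$, so $c := \inf_{\beta \in \F} \|u - \beta g\|_p$ is strictly positive. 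By linear density, fix an approximation $F = \sum_{i=1}^m \alpha_i \phi(\mu_i)$ with $\|u - F\|_p < \epsilon < c$, and pick $n$ with $|\nu_n| > \max_i |\mu_i|$. Each $\mu_i$ is then either an ancestor of $\nu_n$---so $\phi(\mu_i) \succeq \phi(\nu_n) \succeq g$ and $\phi(\mu_i) - g$ is disjoint from $g$---or incomparable to $\nu_n$, in which case the separating property makes $\phi(\mu_i)$ disjoint from $\phi(\nu_n)$, hence from $g$. Regrouping yields $F = \beta_n g + W_n$ with $W_n$ disjoint from $g$, and the $L^p$ ``Pythagorean'' identity on disjointly supported vectors gives $\|u - F\|_p^p \geq \|u - \beta_n g\|_p^p \geq c^p$, a contradiction. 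Only the chain hypothesis was used, not almost norm-maximization; this genericity will be exploited again below.

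For part (2), I first handle disjointness. Given distinct chains $C, C'$ in the partition, it suffices to exhibit incomparable nodes $\nu \in C, \nu' \in C'$, because $\phi(\nu)$ and $\phi(\nu')$ are then disjointly supported by the separating property, so are the $\preceq$-smaller infima. Compare the $\subseteq$-minimal elements of $C$ and $C'$; if they are incomparable we are done, so WLOG assume the starter $\sigma$ of $C$ properly precedes the starter $\tau$ of $C'$, and walk the unique $S$-path $\sigma = \tau_0 \subsetneq \tau_1 \subsetneq \cdots \subsetneq \tau_k = \tau$. At each path-node $\tau_i \in C$ (starting with $i=0$) almost norm-maximization forces $C$ to contain a child of $\tau_i$, which is either $\tau_{i+1}$ (in which case the argument recurses) or a sibling of $\tau_{i+1}$ (in which case that sibling is incomparable with $\tau = \tau_k$, and we are done). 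Were the recursion to continue all the way to $i = k$, the node $\tau$ itself would be placed in $C$, contradicting disjointness of the partition.

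For the atom correspondence, given an atom $A$ of $\Omega$ set $T_A := \{\nu \in S : A \subseteq \supp(\phi(\nu))\text{ a.e.}\}$. The separating property plus atomicity of $A$ force $T_A$ to be a chain (two incomparable nodes would map to disjointly supported vectors, so could not both contain $A$), and $T_A$ is nonempty because otherwise every element of $\phi[S]$ vanishes on $A$, preventing $\chi_A$ from being approximated from $\mathrm{span}(\phi[S])$. Antitone-ness and atomicity imply that each $\phi(\nu)$ with $\nu \in T_A$ takes a common nonzero constant value $c_A$ on $A$. Applying part~(1) to the chain $T_A$ shows that $\inf_\preceq \phi[T_A]$ is a $\preceq$-atom, and since it must dominate $c_A \chi_A \neq \mathbf{0}$, it equals $c_A \chi_A$. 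If $T_A$ is finite with top $\nu^*$, then $\nu^*$ is terminal in $S$ (else its child containing $A$ would extend $T_A$) and the partition chain containing $\nu^*$ has $\phi(\nu^*) = c_A\chi_A$ as its infimum. If $T_A$ is infinite, the identities $\|\phi(\nu_k)\|_p^p, \|\phi(\nu_{k+1})\|_p^p \to |c_A|^p \mu(A)$ drive the total $p$-norm of the siblings of $\nu_{k+1}$ under $\nu_k$ to zero; almost norm-maximization then forces the canonical child of $\nu_k$ inside its partition chain to coincide with $\nu_{k+1}$ for all sufficiently large $k$, because any alternative sibling would have $p$-norm at most $2^{-|\nu_k|}$ above a vanishing sibling, while $\|\phi(\nu_{k+1})\|_p^p \to |c_A|^p\mu(A) > 0$. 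Thus $T_A$ is eventually contained in a single $C_n$, whose infimum equals $c_A \chi_A$; uniqueness of $n$ is immediate from the already-established disjointness.

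The step I expect to be the main obstacle is the last one: preventing a partition chain from ``wandering away'' from $T_A$ in the limit. This is exactly where the quantitative tolerance $2^{-|\nu|}$ in Definition~\ref{def:anm.chain} does its work---a weaker tolerance would permit a partition chain to repeatedly prefer slightly smaller siblings and so miss the atom $A$ altogether, destroying the correspondence.
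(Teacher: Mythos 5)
The paper does not actually prove Theorem \ref{thm:limits.chains}; it imports it from \cite{McNellp} and \cite{Brown.McNicholl.2018}, so your argument has to stand on its own. Most of it does: the regrouping argument via linear density in part (1), the incomparability walk for disjointness of the infima, and the two-case analysis of $T_A$ are all essentially sound.

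The genuine gap is in part (1), and it propagates. Your argument requires choosing $\nu_n \in C$ with $|\nu_n| > \max_i |\mu_i|$, which is only possible when $C$ is infinite, and your parenthetical claim that almost norm-maximization is never used in part (1) is false. For a finite chain the $\preceq$-infimum of $\phi[C]$ is $\phi(\nu^*)$ where $\nu^*$ is the top of $C$, and if $\nu^*$ is nonterminal this need not be an atom: take the disintegration of $\ell^p_2$ given by $\phi(\emptyset)=e_1+e_2$, $\phi((0))=e_1$, $\phi((1))=e_2$, and the chain $C=\{\emptyset\}$, whose infimum $e_1+e_2$ is neither $\zerovec$ nor an atom of $\preceq$. (Your regrouping breaks precisely because $(0)$ is a proper descendant of $\emptyset$.) Almost norm-maximization is exactly what excludes this: a finite almost norm-maximizing chain must contain a child of each of its nonterminal members, hence ends at a terminal node $\nu^*$ of $S$, and for a terminal node no $\mu_i$ can be a proper descendant, so your regrouping goes through with $\nu_n := \nu^*$. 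This needs to be said explicitly, both to complete part (1) and because in part (2) you apply ``part (1)'' to the chain $T_A$, which is not assumed to be almost norm-maximizing --- what you are really invoking there is the strengthened statement (arbitrary infinite chains, plus finite chains whose top is terminal), and your separate verification that a finite $T_A$ has terminal top is what makes that invocation legitimate. With this patch the remainder, including the eventual capture of $T_A$ by a single $C_n$ via the vanishing sibling mass against the $2^{-|\nu_k|}$ tolerance, is correct.
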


The next theorem generalizes a result on isomorphisms of disintegrations from \cite{Clanin.McNicholl.Stull.2019}.  Although disintegrations are not bases, this theorem states an important way in which they behave like bases.
 
\begin{theorem}\label{thm:lifting}
Suppose $\phi_j$ is a $L^p$-formal  disintegration of $\mathcal{B}_j$ for each $j \in \{0,1\}$, and suppose 
$f$ is an isomorphism of $\phi_0$ with $\phi_1$.  Then, there is a unique isometric isomorphism 
$T_f$ of $\mathcal{B}_0$ onto $\mathcal{B}_1$ so that $T_f(\phi_0(\nu)) = \phi_1(f(\nu))$ for all 
$\nu \in \dom(\phi_0)$.  
\end{theorem}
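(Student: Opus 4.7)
The plan is to define $T_f$ on the rational linear span $V_0$ of $\ran(\phi_0)$ by declaring $T_f(\phi_0(\nu)) = \phi_1(f(\nu))$ and extending linearly, then to verify that $T_f$ is isometric on $V_0$, and finally to extend to $\mathcal{B}_0$ by continuity. Uniqueness will follow at once from linear density of $\ran(\phi_0)$.

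The heart of the argument is to show that, for any finite list $\nu_1, \ldots, \nu_n \in S_0 := \dom(\phi_0)$ and any scalars $\alpha_1, \ldots, \alpha_n$,
\[
\left\| \sum_j \alpha_j \phi_0(\nu_j) \right\|_{\B_0} = \left\| \sum_j \alpha_j \phi_1(f(\nu_j)) \right\|_{\B_1}.
\]
To do so I would first form the finite subtree $T_F \subseteq S_0$ generated by $F = \{\nu_1,\ldots,\nu_n\}$ under prefix closure. For each internal node $\nu$ of $T_F$, let $R_\nu := \sum_{\nu'} \phi_0(\nu')$, where $\nu'$ ranges over the children of $\nu$ in $S_0 \setminus T_F$; by summativity this series converges and $\phi_0(\nu) = R_\nu + \sum_{\nu' \in T_F,\, \nu' \text{ child of } \nu} \phi_0(\nu')$. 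A short induction on the height in $T_F$ then rewrites each $\phi_0(\nu_j)$, and hence $\sum_j \alpha_j \phi_0(\nu_j)$, as a finite linear combination of the vectors $\{\phi_0(\mu) : \mu \text{ leaf of } T_F\} \cup \{R_\nu : \nu \text{ internal in } T_F\}$.

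The main obstacle is to prove that this finite family is itself $L^p$-formally disjointly supported. Here I would take partial sums of each $R_\nu$: the truncations, together with the $\phi_0(\mu)$'s for $\mu$ a leaf of $T_F$, are supported at pairwise incomparable nodes of $S_0$ (one checks case-by-case that missing children of distinct internal nodes, and of leaves, remain incomparable), and therefore satisfy the $L^p$-formal disjoint-support identity by $L^p$-formal separation of $\phi_0$. Passing to the limit in $N$ and invoking continuity of the norm gives the same identity for the $R_\nu$ themselves. Consequently $\|\sum_j \alpha_j \phi_0(\nu_j)\|^p$ is a sum of $p$-th powers of coefficients times $\|\phi_0(\mu)\|^p$ and $\|R_\nu\|^p$, and moreover $\|R_\nu\|^p = \|\phi_0(\nu)\|^p - \sum_{\nu' \in T_F} \|\phi_0(\nu')\|^p$.

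Now apply $f$. Since $f$ is an order isomorphism $S_0 \to S_1$ onto $\dom(\phi_1)$, it carries $T_F$ onto the analogous finite subtree of $S_1$, preserving the leaf/internal distinction and the children-in-subtree relation; and by hypothesis $\|\phi_1(f(\nu))\|_{\B_1} = \|\phi_0(\nu)\|_{\B_0}$ for all $\nu \in S_0$. So the analogous decomposition of $\sum_j \alpha_j \phi_1(f(\nu_j))$ has the same coefficients and the same leaf-norms and residual-norms. This yields the desired equality of norms. Well-definedness of $T_f$ on $V_0$ is an immediate consequence (apply the equality to a representation of $\zerovec$), as is linearity, isometry, and the fact that $T_f$ carries $V_0$ onto the rational span of $\ran(\phi_1)$. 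Extending by uniform continuity and using that each $V_i$ is dense in $\B_i$ produces the required isometric isomorphism $T_f : \B_0 \to \B_1$. Uniqueness is forced by the prescribed values on the linearly dense set $\ran(\phi_0)$.
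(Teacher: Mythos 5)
Your proposal is correct, and its overall architecture (define $T_f$ on the linear span of $\ran(\phi_0)$, prove it is isometric there via a formally-disjointly-supported decomposition, extend by density, get surjectivity from surjectivity of $f$ and linear density of $\ran(\phi_1)$, and uniqueness from linear density of $\ran(\phi_0)$) is the same as the paper's. The one place where you genuinely diverge is the decomposition used to verify the isometry. The paper truncates the whole tree at level $n$, setting $F_n = S_0 \cap \omega^{\leq n}$, and expresses each element of the span of $\phi_0[F_n]$ uniquely as a combination of the \emph{leaf vectors} of $F_n$, whose images under $f$ have the same norms and the same formal disjointness; you instead take the finite prefix-closed hull $T_F$ of the finitely many nodes actually appearing in the given linear combination and introduce the residual vectors $R_\nu$ collecting the children omitted from $T_F$, proving their formal disjointness by a partial-sum-and-limit argument. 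Your version is somewhat more robust: when a node has infinitely many children, $F_n$ is infinite and $\phi_0(\nu)$ is only an \emph{infinite} sum of leaf vectors of $F_n$, so the paper's ``unique finite linear combination of leaf vectors'' step needs exactly the kind of residual/limit bookkeeping you supply, together with the explicit formula $\norm{R_\nu}^p = \norm{\phi_0(\nu)}^p - \sum_{\nu'} \norm{\phi_0(\nu')}^p$ showing that the residual norms are determined by data that $f$ preserves. The price is a slightly heavier case analysis of incomparability, which you correctly identify and which does go through. No gaps.
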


\begin{proof}
Let $S_0 = \dom(\phi_0)$, and let $L$ denote the linear span of the vectors of $\phi_0$.  
For each $n \in \N$, let $F_n = S_0 \cap \omega^{\leq n}$, and let $L_n$ denote the linear span 
of $\phi_0[F_n]$.  For each $v \in L$, there is a least $n \in \N$ so that $v \in L_n$; denote this number by $n_v$. 

When $\nu$ is a leaf node of $F_n$, call $\phi_0(\nu)$ a \emph{leaf vector} of $F_n$.  Since 
$\phi_0$ is non-vanishing and $L^p$-formally separating, the 
leaf vectors of $F_n$ are linearly independent.  Because $\phi_0$ is summative, each $v \in L_n$ can be expressed as a linear combination of the leaf vectors of $F_n$ in exactly one way.

When $v \in L$, let $T(v) = \sum_\nu \beta_\nu \phi_1(f(\nu))$ where $\nu$ ranges over the leaf nodes of $F_{n_\nu}$ and $v = \sum_\nu \beta_\nu \phi_0(\nu)$.  Then, $T$ is well-defined and linear.  Since $f$ is an isomorphism, and because $\phi_0$ and $\phi_1$ are $L^p$-formally separating, $T$ is isometric.  Hence, since $\ran(\phi_0)$ is dense in $\B_0$, $T$ has a unique isometric extension to $\mathcal{B}_0$, and this extension is linear.  We denote this extension by $T$ as well.  Since $f$ is surjective and the vectors of $\phi_1$ are linearly dense in $\mathcal{B}_1$, $T$ is surjective.  The uniqueness of $T$ follows from the linear density of the vectors of $\phi_0$.   
\end{proof}

The following is crucial to our analysis of index sets of Lebesgue space presentations.

\begin{theorem}\label{thm:disint.Lp}
A Banach space has an $L^p$-formal disintegration if and only if it is isometrically isomorphic to a separable $L^p$ space.  
\end{theorem}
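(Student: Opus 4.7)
The plan is to prove the two directions separately. For the reverse direction, suppose $\B$ is isometrically isomorphic to a separable $L^p$ space. I would exhibit an $L^p$-formal disintegration explicitly in each case of Theorem~\ref{thm:classification}, using Theorem~\ref{thm:lamperti} to promote an ordinary disintegration to an $L^p$-formal one when $p\ne 2$, and the parallelogram identity in the Hilbert case $p=2$. For $L^p[0,1]$ I would take the dyadic tree with $\phi(\nu)=\mathbf{1}_{I_\nu}$, where $I_\nu$ is the dyadic interval coded by $\nu$; summativity, formal separation, omission of $\zerovec$ and linear density are immediate. For $\ell^p$ or $\ell^p_n$ atoms cannot appear as nodes, since summativity together with injectivity of $\phi$ and omission of $\zerovec$ forces every node of $S$ to have at least two children (an empty children-sum equals $\zerovec$, and a one-element sum collides with its parent under injectivity). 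Each atom is instead realised as the norm-limit along an infinite branch on which a shrinking residue is peeled off at each level. The mixed spaces in Theorem~\ref{thm:classification} are then obtained by grafting such trees under a common root.

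For the forward direction, suppose $\phi:S\to\B$ is an $L^p$-formal disintegration. The strategy is to construct a measure space $(\Omega,\mu)$ directly from the tree and an isometric isomorphism $T:\B\to L^p(\Omega)$. Set $\Omega=[S]$, the set of infinite branches of $S$ (no branch is finite, by the two-children property just noted), and define a premeasure on cylinder sets by $\mu([\nu])=\|\phi(\nu)\|_\B^p$, where $[\nu]=\{\beta\in[S]:\nu\in\beta\}$. The consistency relation $\|\phi(\nu)\|_\B^p=\sum_{\nu'\text{ a child of }\nu}\|\phi(\nu')\|_\B^p$ is immediate from summativity combined with $L^p$-formal disjoint support of siblings, so Carath\'eodory extension produces a Borel measure on $[S]$. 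Along any branch $\beta$ the sequence $\|\phi(\beta_n)\|_\B^p$ is non-increasing; branches whose norm limit is positive become atoms of $(\Omega,\mu)$ of that mass (only countably many, since the total atom mass is bounded by $\|\phi(\emptyset)\|_\B^p$), while the remainder carries the non-atomic part. Define $T$ on the linear span of $\phi[S]$ by $T(\phi(\nu))=\mathbf{1}_{[\nu]}$ and extend linearly.

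The key calculation is to show that $T$ is isometric. Given $v=\sum_j\alpha_j\phi(\nu_j)$, I would select a finite subtree $F\subseteq S$ containing each $\nu_j$ and extending below it, so that iterated summativity gives $\phi(\nu_j)=\sum_{\mu\in L(F),\ \mu\supseteq\nu_j}\phi(\mu)$, where $L(F)$ denotes the leaves of $F$. Setting $\gamma_\mu:=\sum_{\nu_j\subseteq\mu}\alpha_j$, both $\|v\|_\B^p$ and $\|T(v)\|_p^p$ collapse to $\sum_{\mu\in L(F)}|\gamma_\mu|^p\|\phi(\mu)\|_\B^p$, using on the $\B$ side the $L^p$-formal disjoint support of the pairwise-incomparable leaves and on the $L^p(\Omega)$ side the genuine disjoint support of their cylinders. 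Extending by continuity gives an isometric $\bar T:\B\to L^p(\Omega)$; its image is closed because $\B$ is complete and $\bar T$ is isometric, and dense because the cylinder indicators generate the Borel $\sigma$-algebra and each atom indicator arises as a norm-limit of cylinder indicators along a branch, so $\bar T$ is surjective. Countability of $S$ yields separability of $L^p(\Omega)$. The principal obstacle I foresee is proving a propagation lemma needed inside the leaf refinement: if $u\preceq v$ and $v$ is $L^p$-formally disjoint from $\{w_1,\ldots,w_k\}$, then $u$ and $v-u$ are jointly $L^p$-formally disjoint from $\{w_1,\ldots,w_k\}$. Such a lemma does not follow from sibling disjointness at a single level and will need to be extracted directly from the formal identity defining $\preceq$.
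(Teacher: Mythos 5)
Your forward direction is, up to packaging, the paper's own construction: the paper realizes the tree as a nested family of subintervals $I_\nu\subseteq[0,1]$ with $|I_\nu|=\norm{\phi(\nu)}_\B^p$, takes the $\sigma$-algebra these generate, and then invokes Theorem~\ref{thm:lifting} to transport the isometry (so the leaf calculation is done once, there, rather than redone); your boundary-of-the-tree measure space with cylinder mass $\norm{\phi(\nu)}_\B^p$ is the same object. For the converse the paper simply cites the uniformity of Theorem~\ref{thm:disint.comp} instead of building disintegrations case by case. Those differences are cosmetic. The genuine problem is your opening claim that summativity, injectivity and omission of $\zerovec$ force every node of $S$ to have at least two children, and the resulting plan to realize each atom of $\ell^p_n$ as a norm-limit along an infinite branch.

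If every node had at least two children, then along any infinite branch $\beta_0\subset\beta_1\subset\cdots$ each $\beta_{i+1}$ would have a sibling $\sigma_i$ in $S$; the $\sigma_i$ are pairwise incomparable, so $\phi(\sigma_0),\phi(\sigma_1),\ldots$ are pairwise $L^p$-formally disjointly supported nonzero vectors, hence (by Theorem~\ref{thm:lamperti} when $p\neq 2$, by orthogonality when $p=2$) linearly independent. In $\ell^p_n$ there is no room for infinitely many such vectors, so a disintegration of $\ell^p_n$ has \emph{no} infinite branches at all, and your ``shrinking residue'' construction cannot be carried out; read literally, your version of summativity would make the theorem false for every finite-dimensional $L^p$ space. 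The reading the paper actually uses --- see Lemma~\ref{lm:summative}, Definition~\ref{def:anm.chain}, and especially the proof of Lemma~\ref{lm:antichains}, which explicitly produces terminal nodes --- is that summativity constrains only nonterminal nodes, so the empty-sum case never arises. With that reading the converse for $\ell^p_n$ is the one-level tree with terminal children $e_1,\ldots,e_n$, atoms in general arise either as terminal nodes or as positive-norm limits along infinite branches, and in your forward direction $\Omega$ must be the set of \emph{maximal} branches (finite ones included), not just the infinite ones. Two smaller repairs: the exact identity $\phi(\nu_j)=\sum_{\mu\in L(F),\,\mu\supseteq\nu_j}\phi(\mu)$ fails for a finite subtree $F$ when some node has infinitely many children, so the isometry computation needs the $\epsilon$-approximation of Lemma~\ref{lm:summative}; and the propagation fact you flag at the end is indeed needed, but it follows from the observation that the identity defining formal disjoint support is a closed condition preserved under norm limits, applied to $\phi(\nu)-\sum_{\mu\in F}\phi(\mu)=\lim_{F'}\sum_{\mu\in F'\setminus F}\phi(\mu)$.
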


\begin{proof}
The converse follows from the uniformity of the proof of Theorem \ref{thm:disint.comp} below.  

  So, let $\mathcal{B}$ be a Banach space, and suppose $\phi$ is an $L^p$-formal disintegration of $\mathcal{B}$.  Let $S = \dom(\phi)$.  We may assume $\B$ is nonzero.  
Without loss of generality, assume $\norm{\phi(\emptyset)}_\mathcal{B} = 1$.  

We first associate each node of $S$ with a subinterval of $[0,1]$ as follows.  
Let $I_\emptyset = [0,1]$.  Let $\nu$ be a non-root node of $S$, and suppose $I_{\nu'}$ has been defined for all $\nu' \in S$ that lexicographically precede $\nu$.  
If $\nu$ is the lexicographically least child of $\nu^-$, then we define the left endpoint of $I_\nu$ to be the left endpoint of $I_{\nu^-}$.   Suppose $\nu$ is not the lexicographically least child of $\nu^-$.  Let $\nu'$ denote the lexicographically largest sibling of $\nu$ that lexicographically precedes $\nu$.  We then define 
the left endpoint of $I_\nu$ to be the right endpoint of $I_{\nu'}$.  
In either case, we define the right endpoint of $I_\nu$ to be 
$a + \norm{\phi(\nu)}_\mathcal{B}^p$ where $a$ is the left endpoint of $I_\nu$.

Let $\mathcal{M}$ denote the $\sigma$-algebra generated by the $I_\nu$'s, and define $\mu$ to be the restriction of Lebesgue measure to $\mathcal{M}$.  
Let $\Omega = ([0,1], \mathcal{M}, \mu)$.  
Let $\psi(\nu)$ denote the indicator function of $I_\nu$ for each $\nu \in S$.  Then, $\psi$ is a disintegration of $L^p(\Omega)$, and the identity map is an isomorphism of $\phi$ with $\psi$.  Thus, by Theorem \ref{thm:lifting}, $\mathcal{B}$ is isometrically isomorphic to $L^p(\Omega)$.
\end{proof}

The following will be used in our analysis of the index sets of presentations of $\ell^p_n$.

\begin{lemma}\label{lm:antichains}
Suppose $\psi$ is a disintegration of an $L^p$ space $\B$.  
\begin{enumerate}
	\item $\B$ is finite-dimensional if and only if there is a bound $n$ such that every antichain of $\dom(\psi)$ has size no greater than $n.$
	
	\item If $\B$ is finite-dimensional, then the dimension of $\B$ is the least $n \in \N$ so that 
	$\dom(\psi)$ 
	does not contain an antichain of size $n+1$.
\end{enumerate}
\end{lemma}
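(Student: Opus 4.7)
The plan is to exploit two features of the disintegration $\psi$: first, that incomparable nodes are mapped to $L^p$-formally disjointly supported vectors, which forces linear independence since $\psi$ omits $\zerovec$; second, that summativity collapses the span of any finite portion of $\psi$ down to the span of its outer antichain. Both implications of (1), and then (2), will follow from these observations.

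For the forward direction of (1), suppose $\dim \B = d < \infty$ and let $\nu_0, \ldots, \nu_k \in \dom(\psi)$ be incomparable. Then $\psi(\nu_0), \ldots, \psi(\nu_k)$ are $L^p$-formally disjointly supported, and since $\|\psi(\nu_j)\|_\B > 0$ for each $j$, the identity $\|\sum_j \alpha_j \psi(\nu_j)\|^p = \sum_j |\alpha_j|^p \|\psi(\nu_j)\|^p$ forces the vectors to be linearly independent. Hence $k+1 \le d$, giving a uniform bound of $d$ on antichain size.

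For the converse, assume every antichain in $S = \dom(\psi)$ has size at most $n$. For each $m \in \N$ set $F_m = S \cap \omega^{\le m}$, and call $\nu \in F_m$ a \emph{leaf of $F_m$} if it has no $\subseteq$-extension in $F_m$. Leaves of $F_m$ are pairwise $\subseteq$-incomparable, so they form an antichain in $S$ of size at most $n$. By repeated application of summativity, $\psi(\nu) = \sum_{\nu' \text{ leaf of } F_m \text{ extending } \nu} \psi(\nu')$ for every $\nu \in F_m$, so $\text{span}(\psi[F_m])$ has dimension at most $n$. Because $F_m \subseteq F_{m+1}$, these finite-dimensional subspaces form an increasing chain, and a chain of subspaces of dimension at most $n$ stabilizes; hence $L := \text{span}(\ran(\psi)) = \bigcup_m \text{span}(\psi[F_m])$ has dimension at most $n$. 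Since $L$ is finite-dimensional it is closed, and since $\ran(\psi)$ is linearly dense, $\B = \overline{L} = L$ is finite-dimensional with $\dim \B \le n$.

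Part (2) is then immediate: if $n_0$ is the least integer for which $S$ contains no antichain of size $n_0 + 1$, then the maximum antichain size in $S$ is exactly $n_0$; the forward argument of (1) gives $n_0 \le \dim \B$, and the converse argument gives $\dim \B \le n_0$. The only place one must be a little careful is noticing that the leaves of $F_m$ form an antichain in $S$ (not merely in $F_m$), which is where the width bound actually kicks in; everything else is a routine consequence of summativity and the disjointness/nonvanishing conditions built into the definition of a disintegration.
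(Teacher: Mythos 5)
Your proof is correct, and the forward direction coincides with the paper's: formally disjointly supported nonzero vectors are linearly independent, so antichain size is bounded by $\dim\B$. The converse is organized differently. The paper fixes a single antichain $F$ of maximum size $n$, shows every node of $F$ is terminal in $S$ (otherwise injectivity plus summativity would give the node two children, producing an antichain of size $n+1$), deduces that every node of $S$ is comparable to some node of $F$, and concludes that $\B$ is the linear span of the $n$ linearly independent vectors $\psi[F]$ --- which yields $\dim\B = n$ in one stroke and makes part (2) immediate. You instead bound $\dim\operatorname{span}(\psi[F_m])$ for each truncation $F_m = S\cap\omega^{\le m}$ by collapsing onto the leaf antichain of $F_m$, then pass to the increasing union; this gives only the inequality $\dim\B \le n$, which you then combine with the forward inequality to obtain part (2). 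Both routes are sound. Yours avoids having to prove that a maximum antichain consists of terminal nodes, at the cost of the stabilization step; in either argument one should note (as you implicitly do) that the set of children of any node is itself an antichain, hence has at most $n$ elements under the hypothesis of the converse, so that the sums produced by summativity are genuinely finite and the span computations go through.
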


\begin{proof}
 Without loss of generality, suppose $\B$ is nonzero.  
Let $S = \dom(\psi)$.  We then observe that every disjointly supported set of nonzero vectors is linearly independent.  Thus, since $\psi$ is separating, if $\B$ is finite-dimensional, then there is a bound $n$  (namely the dimension of $\B$) such that every antichain of $S$ has size no greater than $n$.  
Conversely, suppose such a bound exists.  Then, there is a largest $n \in \N$ so that $S$ contains an antichain of size $n$; let $F $ be such an antichain.  

We claim that each node of $F$ is a terminal node of $S$.  By way of contradiction, suppose $\nu \in F$ is nonterminal, and let $\nu_0 \in S$ be a child of $\nu$.  Since $\psi$ is injective and summative, $S$ must contain another child of $\nu$, $\nu_1$.  
Thus, $(F - \{\nu\}) \cup \{\nu_0, \nu_1\} $ is an antichain of $S$ of size $n + 1$ which is a contradiction.  Thus, every node of $F$ is a terminal node of $S$. 

By the maximality of $n$, every node of $S$ is comparable to at least one node of $F$.   Since $\psi$ is summative, we conclude that $\B$ is the closed linear span of $\psi[F]$.  Since $F$ is finite, 
$\B$ is the linear span of $\psi[F]$.  Hence, the dimension of $\B$ is $n$.
\end{proof}

\subsection{The modulus of uniform convexity} Suppose we are given a computable
presentation of a Lebesgue space, and our task is to extract its exponent $p$.  According to our definitions, a computable presentation does not contain any information about $p$.
Therefore, we aim to find a way of using only the norm and the Banach space operations to approximate $p$ with an arbitrary precision.
The following local parameter, which has a long history in the geometric analysis of Banach spaces, will be very helpful.

\begin{definition}\label{def:mod.conv}
Suppose $\mathcal{B}$ is a nonzero Banach space.  For all $0 \leq \epsilon \leq 2$, let
\[
\delta_\mathcal{B}(\epsilon) = \inf_{u,v} 1 - \frac{\norm{u + v}}{2}
\]
where $u,v$ range over all unit vectors of $\mathcal{B}$ so that $\norm{u - v} \geq \epsilon$. 
The function $\delta_\B$ is called the \emph{modulus of uniform convexity} of $\B$.
\end{definition}

It is well-known that if $\B$ is an $L^p$ space with $1 <p < \infty$, then $\delta_\B$ is positive.  
Later, in the proof of Main Theorem \ref{mthm:Lebesgue}, we will use the modulus of uniform convexity to 
produce approximations of the exponent of a Lebesgue space from one of its presentations.  
This is made possible by an explicit formula for the modulus of convexity of $L^p$ spaces due to O. Hanner \cite{Hanner.1956}.  In order to state this formula, we first make the following definition.

\begin{definition}\label{def:delta.func}
Suppose $1 < p < \infty$ and $0 < \epsilon \leq 2$.  
\begin{enumerate}
	\item If $1 < p \leq 2$, then let $\delta(p,\epsilon)$ denote the unique number $\delta \in [0,1]$ so that 
	\[
	\left( 1 - \delta + \frac{\epsilon}{2}\right)^p + \left| 1 - \delta - \frac{\epsilon}{2}  \right|^p = 2.
	\]
	
	\item If $2 \leq p$, then let 
	\[
	\delta(p, \epsilon) = 1 - \left( 1 - \left( \frac{\epsilon}{2} \right)^p \right)^{1/p}.
	\]
\end{enumerate}
\end{definition}

We can now state Hanner's Theorem (which in fact he attributes to A. Beurling).  

\begin{theorem}[Hanner 1956 \cite{Hanner.1956}]\label{thm:Hanner}
Suppose $1 < p < \infty$ and $\mathcal{B}$ is either $\ell^p$ or $L^p[0,1]$.  Then, 
$\delta_\mathcal{B}(\epsilon) = \delta(p, \epsilon)$ whenever $0 < \epsilon \leq 2$.  
\end{theorem}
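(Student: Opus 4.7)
The proof splits into the two inequalities $\delta_{\B}(\epsilon) \geq \delta(p,\epsilon)$ and $\delta_{\B}(\epsilon) \leq \delta(p,\epsilon)$. The main ingredient is Hanner's inequalities, which for $1 \leq p \leq 2$ state
\[
\norm{f+g}_p^p + \norm{f-g}_p^p \geq \left( \norm{f}_p + \norm{g}_p \right)^p + \left| \norm{f}_p - \norm{g}_p \right|^p
\]
for all $f, g \in L^p(\Omega)$, with the reverse inequality for $p \geq 2$.  These are due to Hanner; their proof is delicate and I would cite it rather than reprove it.

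For the lower bound, fix unit vectors $u, v \in \B$ with $\norm{u-v}_p \geq \epsilon$ and set $a = \norm{(u+v)/2}_p$ and $b = \norm{(u-v)/2}_p$.  When $p \geq 2$, I apply Hanner's inequality directly to $u, v$: since $\norm{u}_p = \norm{v}_p = 1$, the right-hand side equals $2^p$, giving $a^p + b^p \leq 1$, and combined with $b \geq \epsilon/2$ this yields $a \leq (1 - (\epsilon/2)^p)^{1/p} = 1 - \delta(p, \epsilon)$.  When $1 < p \leq 2$, I apply Hanner to $f = (u+v)/2$ and $g = (u-v)/2$, so that $f + g = u$ and $f - g = v$, and obtain $2 \geq (a+b)^p + |a-b|^p$.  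A short calculation shows that $(a+b)^p + |a-b|^p$ is nondecreasing in each of $a, b$ for $p \geq 1$ (one-sided derivatives are nonnegative), so after substituting $b = \epsilon/2$ and invoking the defining equation of $\delta(p,\epsilon)$, one concludes $a \leq 1 - \delta(p, \epsilon)$.  In either regime $1 - a \geq \delta(p, \epsilon)$, hence $\delta_{\B}(\epsilon) \geq \delta(p, \epsilon)$.

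For the upper bound I exhibit explicit extremal pairs living in a two-dimensional subspace, which embeds isometrically into both $\ell^p$ and $L^p[0,1]$ via $(\alpha, \beta) \mapsto \alpha e_1 + \beta e_2$ or $(\alpha, \beta) \mapsto \alpha \chi_{[0,1/2]} + \beta \chi_{[1/2,1]} \cdot 2^{1/p}$ (rescaled to be isometric).  For $p \geq 2$, I take $u = (c, \epsilon/2)$ and $v = (c, -\epsilon/2)$ with $c = (1 - (\epsilon/2)^p)^{1/p}$; then $\norm{u}_p = \norm{v}_p = 1$, $\norm{u-v}_p = \epsilon$, and $\norm{(u+v)/2}_p = c = 1 - \delta(p, \epsilon)$.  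For $1 < p \leq 2$, I set $t = (\epsilon/2)/(1 - \delta(p, \epsilon))$ and $N = ((1+t)^p + (1-t)^p)^{1/p}$, and take $u = (1+t, 1-t)/N$, $v = (1-t, 1+t)/N$; direct computation gives $\norm{u}_p = \norm{v}_p = 1$, $\norm{(u+v)/2}_p = 2^{1/p}/N$, and $\norm{(u-v)/2}_p = t \cdot 2^{1/p}/N$, and then the defining equation for $\delta(p,\epsilon)$ rearranges to $(1-\delta(p,\epsilon))^p \cdot ((1+t)^p + (1-t)^p) = 2$, which is exactly $\norm{(u+v)/2}_p = 1 - \delta(p, \epsilon)$ together with $\norm{u-v}_p = \epsilon$.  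Thus in both regimes the infimum defining $\delta_{\B}(\epsilon)$ is attained at $\delta(p, \epsilon)$.

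The main technical obstacle lies not in our argument but in Hanner's inequalities themselves, whose proof requires a careful analysis of the scalar function $r \mapsto (1+r)^p + (1-r)^p$ and its convexity properties.  Once those are in hand, everything reduces to one-dimensional calculus and monotonicity, the only real bookkeeping being to track the sign of $1 - \delta - \epsilon/2$ in the $p \leq 2$ regime (though strict monotonicity of $g(\delta) := (1-\delta+\epsilon/2)^p + |1 - \delta - \epsilon/2|^p$ on $[0,1]$, verified by splitting at the sign change, ensures that $\delta(p,\epsilon)$ is well defined).
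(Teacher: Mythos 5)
The paper does not prove this theorem at all: it is imported verbatim from Hanner's 1956 paper (with credit to Beurling), so there is no in-paper argument to compare yours against line by line. Your reconstruction is essentially the classical one and is correct modulo the two Hanner inequalities, which you reasonably cite rather than reprove; note that your upper-bound construction is not even new to this paper, since your extremal vectors coincide exactly with the $u(\epsilon), v(\epsilon)$ that the authors write down in the proof of Proposition \ref{prop:hanner.extended} (for $1<p\le 2$, your $(1+t,1-t)/N$ simplifies to $2^{-1/p}\bigl(1-\delta+\tfrac{\epsilon}{2},\,1-\delta-\tfrac{\epsilon}{2}\bigr)$, which is their $u(\epsilon)$), and the monotonicity of $(s,t)\mapsto (s+t)^p+|s-t|^p$ that you invoke is the same fact they use in Proposition \ref{prop:delta.inc.dec}. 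Three small repairs are needed. First, in the regime $1<p\le 2$ with $\epsilon=2$ the defining equation forces $\delta(p,2)=1$, so $t=(\epsilon/2)/(1-\delta)$ is undefined; the theorem statement includes $\epsilon=2$, so you must treat this case separately (take $v=-u$ for any unit vector $u$, which gives $\norm{u-v}_p=2$ and $\norm{u+v}_p=0$). Second, when $2^{1/p}<\epsilon<2$ one has $1-\delta-\epsilon/2<0$, hence $t>1$, and your normalizer must be written $N=\bigl((1+t)^p+|1-t|^p\bigr)^{1/p}$ rather than with $(1-t)^p$, which is not defined for non-integer $p$; the computations survive unchanged. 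Third, in the lower bound for $1<p\le 2$, passing from $(a+\epsilon/2)^p+|a-\epsilon/2|^p\le 2=(1-\delta+\epsilon/2)^p+|1-\delta-\epsilon/2|^p$ to $a\le 1-\delta$ requires \emph{strict} monotonicity of $s\mapsto (s+\epsilon/2)^p+|s-\epsilon/2|^p$, not just the nondecreasing property you state; you do verify strictness at the very end for well-definedness of $\delta(p,\epsilon)$, so you should make explicit that the same strictness is what closes this step.
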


For the sake of computation and approximation, it will be useful to show that Hanner's formula
applies to a broader class of $L^p$ spaces and that, at least for these spaces, the weak inequality in
Definition \ref{def:mod.conv} can be made strict when $\epsilon < 2$.  

\begin{proposition}\label{prop:hanner.extended}
Suppose $1 < p < \infty$, and let $\mathcal{B}$ be a separable $L^p$ space whose dimension is at least $2$.  Then, 
\begin{enumerate} 
	\item $\delta_B(\epsilon) = \delta(p, \epsilon)$ for all $\epsilon \in (0,2]$.  

	\item If $0 < \epsilon < 2$,  
\[
\delta_\B(\epsilon) = \inf_{u,v} 1 - \frac{1}{2} \norm{u - v}_p
\]
where $u,v$ range over all unit vectors of $\B$ so that $\norm{u - v}_p > \epsilon$.
\end{enumerate}
\end{proposition}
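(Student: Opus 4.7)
The plan is to sandwich $\delta_\B(\epsilon)$ between two copies of $\delta(p,\epsilon)$, using two general observations: (i) an isometric embedding $W \hookrightarrow \B$ gives $\delta_\B(\epsilon) \leq \delta_W(\epsilon)$, because we infimize over a larger unit sphere, and (ii) the map $\epsilon \mapsto \delta(p,\epsilon)$ is continuous on $(0,2]$, which is immediate from Definition \ref{def:delta.func} (for $1 < p \leq 2$ one uses the implicit function theorem on the defining equation).

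For the lower bound $\delta(p,\epsilon) \leq \delta_\B(\epsilon)$ in part (1), I would show that every separable $L^p$ space embeds isometrically into $L^p[0,1]$. Working through the Classification Theorem, the essential step is the embedding $\ell^p \hookrightarrow L^p[0,1]$ via $e_n \mapsto \mu(A_n)^{-1/p}\chi_{A_n}$ for a partition $\{A_n\}$ of $[0,1]$ into sets of positive measure; for hybrid cases $\ell^p_n \oplus_p L^p[0,1]$ and $\ell^p \oplus_p L^p[0,1]$, split $[0,1]$ into two subintervals and map each summand into one. Combining monotonicity with Theorem \ref{thm:Hanner} applied to $L^p[0,1]$ yields the lower bound.

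For the upper bound $\delta_\B(\epsilon) \leq \delta(p,\epsilon)$, the hypothesis $\dim \B \geq 2$ guarantees two disjointly supported unit vectors in $\B$, spanning an isometric copy of $\ell^p_2$; so it suffices to check $\delta_{\ell^p_2}(\epsilon) \leq \delta(p,\epsilon)$ by an explicit extremal configuration. For $p \geq 2$, I would take $u = (\alpha, \epsilon/2)$ and $v = (\alpha, -\epsilon/2)$ with $\alpha = (1 - (\epsilon/2)^p)^{1/p}$; direct computation gives unit norms, $\|u-v\|_p = \epsilon$, and $1 - \|u+v\|_p/2 = \delta(p,\epsilon)$. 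For $1 < p \leq 2$, I would take $u = (a,b)$ and $v = (b,a)$ with $a = 2^{-1/p}(1 - \delta(p,\epsilon) + \epsilon/2)$ and $b = 2^{-1/p}(1 - \delta(p,\epsilon) - \epsilon/2)$ (allowing $b < 0$ once $\epsilon > 2^{1/p}$); the defining Hanner equation forces $|a|^p + |b|^p = 1$, and a short computation yields $\|u-v\|_p = \epsilon$ and $1 - \|u+v\|_p/2 = \delta(p,\epsilon)$.

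For part (2), let $\tilde\delta_\B(\epsilon)$ denote the right-hand side. Trivially $\tilde\delta_\B(\epsilon) \geq \delta_\B(\epsilon)$. Conversely, for any $\epsilon' \in (\epsilon, 2)$, the constraint $\|u - v\|_p \geq \epsilon'$ implies $\|u-v\|_p > \epsilon$, so $\tilde\delta_\B(\epsilon) \leq \delta_\B(\epsilon') = \delta(p,\epsilon')$ by part (1). Sending $\epsilon' \searrow \epsilon$ and invoking continuity of $\delta(p,\cdot)$ yields the reverse inequality. The main obstacle is the $1 < p \leq 2$ case of the extremal computation, where the sign change of $b$ at $\epsilon = 2^{1/p}$ requires careful use of the absolute value in Hanner's equation; the rest is routine manipulation.
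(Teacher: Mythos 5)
Your proposal is correct and follows essentially the same route as the paper's proof: the identical extremal pair in $\ell^p_2$ (the paper's $u(\epsilon),v(\epsilon)$ are exactly your vectors) gives the upper bound, the isometric embedding of $\B$ into $L^p[0,1]$ combined with Hanner's theorem gives the lower bound, and part (2) is obtained by the same limiting argument $\epsilon' \searrow \epsilon$ using continuity of $\delta(p,\cdot)$. The only difference is that you spell out the embedding of an arbitrary separable $L^p$ space into $L^p[0,1]$ and the sign issue at $\epsilon = 2^{1/p}$, which the paper leaves implicit.
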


\begin{proof}
Let:
\begin{eqnarray*}
u(\epsilon) & = &
 \left\{ \begin{array}{cc}
 2^{-1/p}(1 - \delta(p,\epsilon) + \frac{\epsilon}{2}, 1 - \delta(p, \epsilon) - \frac{\epsilon}{2}) & 1 < p < 2\\
 (1 - \delta(p,\epsilon), \frac{\epsilon}{2}) & 2 \leq p\\
 \end{array}
 \right. \\
 v(\epsilon) & = &  \left\{ \begin{array}{cc}
 2^{-1/p}(1 - \delta(p,\epsilon) - \frac{\epsilon}{2}, 1 - \delta(p, \epsilon) + \frac{\epsilon}{2}) & 1 < p < 2\\
 (1 - \delta(p,\epsilon), - \frac{\epsilon}{2}) & 2 \leq p\\
 \end{array}
 \right. 
\end{eqnarray*}
Then, $u(\epsilon)$ and $v(\epsilon)$ are unit vectors of $\ell^p_2$ so that 
$\norm{u(\epsilon) - v(\epsilon)}_p = \epsilon$ and $\delta(p, \epsilon) = 1 - \frac{1}{2} \norm{u(\epsilon) + v(\epsilon)}_p$.  

We first show $\delta_\B(\epsilon) = \delta(p, \epsilon)$.  Since the dimension of $\B$ is at least $2$, 
$\ell^p_2$ isometrically embeds into $\B$.  Thus, $\delta_\B \leq \delta_{\ell^p_2}$.  
However, $\delta_{\ell^p_2}(\epsilon) \leq 1 - \frac{1}{2}\norm{u(\epsilon) + v(\epsilon)}_p = \delta(p, \epsilon)$.   And, since $\B$ isometrically embeds into $L^p[0,1]$, $\delta_{L^p[0,1]}(\epsilon) \leq \delta_\B$.  So, by Theorem \ref{thm:Hanner}, $\delta(p,\epsilon) \leq \delta_\B(\epsilon)$ and so $\delta(p, \epsilon) = \delta_\B(\epsilon)$.

Now, suppose $0 < \epsilon < 2$, and let $\delta_\B'(\epsilon) = \inf_{u,v} 1 - \frac{1}{2}\norm{u + v}_p$
where $u,v$ range over all unit vectors of $\B$ so that $\norm{u - v}_p > \epsilon$.  Then, 
$\delta_\B(\epsilon) \leq \delta_\B'(\epsilon)$.  Fix an isometric embedding $T$ of $\ell^p_2$ into $\B$.  Then, when $\epsilon < \epsilon' < 2$, 
\[
\delta_\B'(\epsilon) \leq 1 - \frac{1}{2} \norm{T(u(\epsilon')) + T(v(\epsilon'))}_p = \delta(p, \epsilon').
\]
Since $\delta$ is continuous, $\delta_\B'(\epsilon) \leq \delta(p, \epsilon) = \delta_\B(\epsilon)$.  
\end{proof}

The proposition below will later (Lemma~\ref{lm:comp.exp}) allow us to approximate $p$ based on an approximation of  $\delta_B(\epsilon) = \delta(p, \epsilon)$.
Although this approximation will not be computable in general, it will be $\Delta^0_2$ which is sufficient for our purposes.  In particular, this approximation will be essential in Section \ref{sec:index.Lebesgue} where we show that the index set of all computable Lebesgue space presentations is $\Pi^0_3$.

\begin{proposition}\label{prop:delta.inc.dec}
Fix $0 < \epsilon < 2$.  
\begin{enumerate}
	\item If $2 \leq p_1 < p_2 < \infty$, then $\delta(p_2, \epsilon) < \delta(p_1, \epsilon)$. \label{prop:deltap.inc.dec::itm:p>2}

	\item If $1 < p_1 < p_2 \leq 2$, then $\delta(p_1, \epsilon) < \delta(p_2, \epsilon)$.\label{prop:deltap.inc.dec::itm:p<2}
\end{enumerate}
\end{proposition}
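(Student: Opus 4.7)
For part (1), since the defining formula when $p \geq 2$ is explicit, my plan is to verify strict monotonicity by a direct log-derivative calculation. Set $a := \epsilon/2 \in (0,1)$ and $h(p) := (1 - a^p)^{1/p}$, so that $\delta(p, \epsilon) = 1 - h(p)$; the claim reduces to $h(p_1) < h(p_2)$. A short computation gives
\[
(\log h)'(p) = -\frac{\log(1 - a^p)}{p^2} - \frac{a^p \log a}{p(1 - a^p)},
\]
and both summands are strictly positive because $\log a < 0$ and $\log(1 - a^p) < 0$. Hence $h$ is strictly increasing on $[2, \infty)$ and (1) follows.

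For part (2), the definition of $\delta(p, \epsilon)$ is implicit, so I would proceed via the implicit function theorem. Put $a := 1 - \delta + \epsilon/2$, $b := 1 - \delta - \epsilon/2$, and $F(p, \delta) := a^p + b^p - 2$. First I would verify that at the solution $\delta = \delta(p, \epsilon)$ one has $b \geq 0$, so the absolute value in the definition vanishes: this is witnessed by the standard $\ell^p_2$ pair $u = (a, b)/\|(a,b)\|_p$, $v = (b, a)/\|(a,b)\|_p$, which already shows $\delta_{\ell^p_2}(\epsilon) \leq 1 - \epsilon/2$. A brief case analysis using $a^p + b^p = 2$ and $a - b = \epsilon > 0$ then forces $a > 1 > b \geq 0$ (if $a \leq 1$ then $a^p + b^p \leq 2$ with equality only at $\epsilon = 0$; if $b \geq 1$ then $a^p + b^p \geq 1 + (1+\epsilon)^p > 2$). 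Now $\partial_\delta F = -p(a^{p-1} + b^{p-1}) < 0$, so by implicit differentiation
\[
\operatorname{sgn}\frac{d\delta}{dp} \;=\; \operatorname{sgn}\partial_p F \;=\; \operatorname{sgn}\bigl(a^p \log a + b^p \log b\bigr).
\]
To pin down this sign I will apply Jensen's inequality to the strictly convex function $\phi(x) := x \log x$:
\[
\tfrac{1}{2}\bigl(\phi(a^p) + \phi(b^p)\bigr) \;>\; \phi\!\left(\tfrac{a^p + b^p}{2}\right) \;=\; \phi(1) \;=\; 0,
\]
the inequality being strict because $a^p \neq b^p$. Dividing by $p > 0$ yields $a^p \log a + b^p \log b > 0$, hence $d\delta/dp > 0$, which gives (2).

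The main subtlety is in part (2): the implicit form of $\delta(p, \epsilon)$ rules out a direct manipulation, and one needs a clean tool to extract the sign of $a^p \log a + b^p \log b$ from the single constraint $a^p + b^p = 2$. The function $\phi(x) = x\log x$ is perfectly adapted to this, since its strict convexity together with $\phi(1) = 0$ converts the normalization $(a^p + b^p)/2 = 1$ into the desired strict positivity, and it simultaneously delivers the \emph{strict} inequality required by the statement. The only other step requiring care is the auxiliary verification $a > 1 > b \geq 0$, which is needed to justify dropping the absolute value in the definition of $\delta$; this falls out of the short case analysis indicated above and uses only $0 < \epsilon < 2$.
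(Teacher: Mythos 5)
Your part (1) is correct and is essentially the paper's own argument in cleaner form (the paper differentiates $(1-(\epsilon/2)^p)^{1/p}$ directly and checks the sign of the resulting expression). Your part (2) takes a genuinely different route: the paper avoids differentiation entirely by setting $h_p(s,t)=(s+t)^p+|s-t|^p$, noting $h_p(1-\delta(p,\epsilon),\tfrac{\epsilon}{2})=2$ for $1<p\le 2$, and using strict convexity of $x\mapsto x^{p_2/p_1}$ to get $h_{p_2}(1-\delta(p_1,\epsilon),\tfrac{\epsilon}{2})>2$, whence monotonicity of $h_{p_2}$ in its first argument forces $\delta(p_2,\epsilon)>\delta(p_1,\epsilon)$. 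Your implicit-differentiation-plus-Jensen argument is a legitimate alternative (the convexity of $x\log x$ is in effect the infinitesimal version of the paper's power-mean inequality), but as written it contains a genuine error.

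The false step is the auxiliary claim that $b=1-\delta-\epsilon/2\ge 0$ at the solution. For $p=2$ the definition gives $1-\delta=\sqrt{1-(\epsilon/2)^2}$, so $b=\sqrt{1-(\epsilon/2)^2}-\epsilon/2<0$ whenever $\sqrt{2}<\epsilon<2$, and by continuity the same happens for $p$ slightly below $2$; equivalently, the bound $\delta_{\ell^p_2}(\epsilon)\le 1-\epsilon/2$ you attribute to the pair $u,v$ is simply false for such $\epsilon$. Your case analysis establishes $a>1>|b|$ but cannot establish $b\ge 0$, so you may not drop the absolute value, and the expression $\partial_\delta F=-p(a^{p-1}+b^{p-1})$ is not even defined for $b<0$ and non-integer $p$. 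The damage is local and reparable: work with $F(p,\delta)=a^p+|b|^p-2$ throughout. Then $\partial_\delta F=-p\left(a^{p-1}+\operatorname{sgn}(b)\,|b|^{p-1}\right)<0$, because $a-b=\epsilon>0$ and $a+b=2(1-\delta)\ge 0$ give $a>|b|\ge 0$ (hence $a^{p-1}>|b|^{p-1}$), and your Jensen step applies verbatim to $a^p$ and $|b|^p$, whose average is still $1$ and which are distinct; this yields $\partial_p F=a^p\log a+|b|^p\log|b|>0$. (Extend $\phi(x)=x\log x$ to $[0,\infty)$ with $\phi(0)=0$ so that the degenerate case $b=0$ causes no trouble.) With these repairs the argument is sound.
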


\begin{proof}
(\ref{prop:deltap.inc.dec::itm:p>2}): It is sufficient to calculate

 $\frac{d}{dp} \left( 1 - \left( \frac{\epsilon}{2} \right)^p \right)^{1/p} $
$= \dfrac{(1-(\epsilon/2)^p)^{1/p} \left[ ( 2^p - \epsilon^p) \ln(1-(\epsilon/2)^p) + p \epsilon^p \ln(\epsilon/2)\right]}{p^2(\epsilon^p -2^p)} $ 
 
and see that  the result is positive for a positive $p$ whenever  $0 < \epsilon < 2$. \\

(\ref{prop:deltap.inc.dec::itm:p<2}): Suppose $1 < p_1 < p_2 \leq 2$.  
Set $\delta = \delta(p_1, \epsilon)$.  Set $h_p(s,t) = (s + t)^p + |s - t|^p$ when $s,t \geq 0$ and $p > 1$.  
It follows that $h_p$ is increasing in each variable (divide by larger of two and differentiate the result).  
Also, $h_p(1 - \delta(p, \epsilon), \frac{\epsilon}{2}) = 2$ whenever $1 < p \leq 2$.  

Set $\delta = \delta(\epsilon, p_1)$.  Then, 
\[
\left( \frac{(1 - \delta + \frac{\epsilon}{2})^{p_1} + |1 - \delta + \frac{\epsilon}{2}|^{p_1}}{2} \right)^{p_2/p_1} = 1
\]
Since $p_2 > p_1$, $x \mapsto x^{p_2/p_1}$ is strictly convex.  We infer that, 
\[
\frac{1}{2}\left[ (1 - \delta + \frac{\epsilon}{2})^{p_2} + |1 - \delta + \frac{\epsilon}{2}|^{p_2} \right] > 1
\]
Since $h_{p_2}$ is increasing in both variables, it follows that $\delta(p_2, \epsilon) > \delta(p_1, \epsilon)$.
\end{proof}

\subsection{Computable disintegrations}\label{sec:prelim::subsec:CA}

We begin by stating two theorems from prior work on the computation of disintegrations and almost norm-maximizing chains.  Theorem \ref{thm:disint.comp} is from \cite{Clanin.McNicholl.Stull.2019}. 
Theorem \ref{thm:decomp} was first proven for $\ell^p$ spaces in \cite{McNellp} and for general $L^p$ spaces in \cite{Brown.McNicholl.2018}

\begin{theorem}\label{thm:disint.comp}
Suppose $p$ is a computable real so that $p \geq 1$ and $p \neq 2$.  
If $\mathcal{B}^\#$ is a computable presentation of an $L^p$ space, then 
there is a computable disintegration of $\mathcal{B}^\#$.
\end{theorem}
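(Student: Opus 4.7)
The plan is to build the disintegration $\phi$ effectively, stage by stage, by successively incorporating each distinguished vector $v_n$ of $\B^\#$ into a growing finite vector subtree $F_n$. The key enabler is Lamperti's theorem (Theorem~\ref{thm:lamperti}): since $p\neq 2$, formal disjoint support of two vectors $u,v$ (equivalently, actual disjoint support in $L^p$) is captured by the single norm identity $\norm{u+v}^p+\norm{u-v}^p=2(\norm{u}^p+\norm{v}^p)$. Because the norm of any rational vector of $\B^\#$ is computable, this identity is $\Pi^0_1$, so disjoint support can be certified to any prescribed precision using only the computable norm.

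At stage $n$ I maintain a finite subtree $F_n\subseteq\omega^{<\omega}$ with an assignment $\phi_n\colon F_n\to\B$ that is summative, formally separating, omits $\zerovec$, and whose leaves' linear span approximates $v_0,\ldots,v_{n-1}$ to within $2^{-n}$. To pass to stage $n+1$, for each leaf $\nu$ with vector $\ell=\phi_n(\nu)$ I must split $\ell$ into formally disjointly supported children $\ell_1,\ldots,\ell_k$ such that the part of $v_n$ living on $\supp(\ell)$ in the underlying $L^p(\Omega)$ lies in their linear span. The true decomposition exists by restricting $v_n$ to $\supp(\ell)$ and to its complement; and because rational vectors of $\B^\#$ are linearly dense, there exist rational approximants to both pieces. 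I effectively enumerate rational linear combinations of distinguished vectors as candidates $\ell_1,\ldots,\ell_k$, use Lamperti's identity and the computable norm to verify approximate pairwise formal disjointness and approximate formal subordination to $\ell$ with tolerance $2^{-(n+1)}$, and additionally verify that the appropriate linear combination approximates a chosen rational approximant of $v_n$'s component on $\ell$. The existence of the true decomposition guarantees the search halts.

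The main obstacle is precision control: approximate disjointness at each stage must compound into exact disjointness in the limit, and each $\phi(\nu)$ must be a computable point of $\B$. I would handle this by (i) tightening the tolerance geometrically with the stage, (ii) at later stages $m>n$, continually sharpening the rational representatives of already-committed vectors $\phi(\nu)$ for $\nu\in F_n$ so that they form an effectively convergent Cauchy sequence, and (iii) invoking Proposition~\ref{prop:subvectorLimitsExist} to see that limits of chains in the formal-component order exist in $L^p$. Summativity is preserved exactly at each stage by the algebraic split, while the formal separation identity, enforced to precision $2^{-n}$ at stage $n$ and propagated by continuity of the norm, becomes exact in the limit. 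Computability of $\phi$ is immediate from the stage-wise sharpening, and linear density of $\ran(\phi)$ follows since each $v_n$ lies in the closed linear span of the leaves of $F_{n+1}$ and the $v_n$ were linearly dense to begin with.
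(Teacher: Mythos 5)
First, a point of orientation: the paper does not actually prove Theorem~\ref{thm:disint.comp}; it imports it from \cite{Clanin.McNicholl.Stull.2019} (with the $\ell^p$ case originating in \cite{McNellp}), so there is no in-paper proof to compare against. Your outline is in the same general spirit as the cited construction: grow a finite vector subtree stage by stage, splitting leaf vectors to accommodate each new distinguished vector $v_n$, with Lamperti's identity serving as the effective detector of disjointness.

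That said, there is a genuine gap at the step you describe as ``approximate disjointness \ldots becomes exact in the limit.'' A rational vector of $\B^\#$ is a finite rational combination of distinguished vectors and will essentially never be \emph{exactly} disjointly supported from another such vector; the Lamperti identity is a $\Pi^0_1$ condition that can only be refuted, never confirmed, in finite time. Consequently every vector you commit to a node must be revised infinitely often, and the construction yields a genuine disintegration only if you prove a quantitative stability lemma: if a finite family of vectors has small Lamperti defect (the quantity $\bigl|\norm{u+v}_p^p + \norm{u-v}_p^p - 2(\norm{u}_p^p+\norm{v}_p^p)\bigr|$ is small for each pair), then the family lies within an explicitly computable distance of an exactly disjointly supported family with the same sums. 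That estimate is what guarantees (i) that at each later stage you can find sharper representatives that simultaneously shrink all defects, preserve exact summativity down the tree, and move each committed vector by at most a summable amount, so the representatives of a fixed node form an effectively convergent Cauchy sequence; and (ii) that linear density survives the infinitely many perturbations. As written, you assert these convergences by ``continuity of the norm,'' but continuity only tells you that the limit of an already-given convergent sequence of pairs has defect equal to the limit of the defects; it does not supply the existence of the refining sequence. (Proposition~\ref{prop:subvectorLimitsExist} does not help here either: it concerns $\preceq$-decreasing chains in an $L^p$ space, not the revision sequence attached to a single node.) This stability estimate is the technical core of the cited proof and is the missing ingredient in yours.
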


\begin{theorem}\label{thm:decomp}
If $\B^\#$ is a computable presentation of an $L^p$ space, and if $\phi$ is a computable disintegration of $\B^\#$, then there is a partition $\{C_n\}_{n < \kappa}$ of $\dom(\phi)$ into uniformly c.e. almost norm-maximizing chains.
\end{theorem}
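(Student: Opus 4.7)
\textbf{Proof plan for Theorem \ref{thm:decomp}.}

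The plan is to construct the chains stage-by-stage, using the computability of $\phi$ together with the key identity $\|\phi(\nu)\|_p^p = \sum_{\mu} \|\phi(\mu)\|_p^p$ (summed over the children $\mu$ of $\nu$ in $S$), which follows from summativity and $L^p$-formal disjointness of the children. Let $S = \dom(\phi)$. Because $\phi$ is computable, the numbers $\|\phi(\nu)\|_p$ are uniformly computable from $\nu \in S$, and $S$ itself is a computable tree.

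The key subroutine is an algorithm that, given a nonterminal $\nu \in S$, enumerates a single child $\chi(\nu)$ of $\nu$ that is almost norm-maximizing, in the precise sense that $\max_\mu \|\phi(\mu)\|_p^p \leq \|\phi(\chi(\nu))\|_p^p + 2^{-|\nu|}$. Set $\varepsilon = 2^{-|\nu|-2}$. First, compute $\|\phi(\nu)\|_p^p$ to precision $\varepsilon$. Then enumerate children $\nu\cat i$ in $S$ and their approximate norms to precision $\varepsilon/2$, accumulating the partial sum, and wait for a finite set $F$ of children such that $\sum_{i \in F} \|\phi(\nu\cat i)\|_p^p > \|\phi(\nu)\|_p^p - \varepsilon$. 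This search terminates: since $\sum_i \|\phi(\nu\cat i)\|_p^p = \|\phi(\nu)\|_p^p$ converges, only finitely many children contribute significantly. Once $F$ is found, every child outside $F$ has norm-power strictly less than $\varepsilon$, and in particular strictly less than $2^{-|\nu|}$. Among the children in $F$, output one whose approximate norm-power is within $\varepsilon$ of the maximum approximate norm-power over $F$. The triangle of precision estimates ensures that this child lies within $2^{-|\nu|}$ of the true maximum over all children of $\nu$.

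With $\chi$ in hand, build the chains recursively. Declare $\emptyset \in C_0$. Whenever a node $\nu$ has been assigned to some chain $C_n$ and is nonterminal in $S$, enumerate $\chi(\nu)$ into $C_n$ and enumerate each remaining child of $\nu$ (waiting until enough children are seen to be sure a given node is not $\chi(\nu)$, which is fine since $\chi(\nu)$ eventually appears) as the initial node of a fresh chain $C_m$ with a new index $m$. All of this is done in a single c.e. construction that dovetails over stages and over the currently active chains; the uniform c.e.\ indexing of the $C_n$'s follows directly.

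It remains to verify the required properties. Each non-root node of $S$ has a unique parent in $S$, and that parent contributes $\nu$ to exactly one chain (either as the chosen extension $\chi(\nu^-)$ or as the seed of a new chain), so the $C_n$'s partition $\dom(\phi)$. Each $C_n$ is a chain by construction, and is almost norm-maximizing because for every nonterminal $\nu \in C_n$, its successor in $C_n$ is $\chi(\nu)$, which satisfies the defining inequality of Definition \ref{def:anm.chain}. The main obstacle is the c.e.\ determination of $\chi(\nu)$: we cannot decide the maximum outright, but the convergence of $\sum_i \|\phi(\nu\cat i)\|_p^p$ combined with the $2^{-|\nu|}$ slack built into the definition of almost norm-maximizing chains is precisely what makes the approximation feasible and uniform.
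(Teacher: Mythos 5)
The paper does not actually prove Theorem \ref{thm:decomp}; it quotes it from \cite{McNellp} and \cite{Brown.McNicholl.2018}, and your greedy construction --- truncating the children of $\nu$ to a finite set $F$ whose norm-powers nearly exhaust $\norm{\phi(\nu)}_p^p$, then selecting an approximate maximizer in $F$ and absorbing all errors into the $2^{-|\nu|}$ slack of Definition \ref{def:anm.chain} --- is exactly the standard argument from those sources, and your error budget ($\varepsilon = 2^{-|\nu|-2}$, giving a true loss of at most $3\varepsilon < 2^{-|\nu|}$) checks out. It is correct up to routine repairs: $\dom(\phi)$ is in general only c.e.\ rather than computable (harmless, since you only ever act on children as they are enumerated); the child norms should be approximated with geometrically decreasing precision so that the partial sums of $\norm{\phi(\nu\cat i)}_p^p$ carry a bounded total error rather than one growing with $|F|$; and computing the $p$-th powers tacitly uses the standing assumption, explicit in Theorem \ref{thm:disint.comp} and in the signature of $\Phidecomp$, that a name of $p$ is available.
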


The key feature of the proofs of these theorems is their high degree of uniformity.  
To be more precise about this, we introduce names of disintegrations and almost norm-maximizing chain decompositions as follows.  
Let $\psi$ be a vector tree of $\B^\#$.  A \emph{name} of $\psi$ is an enumeration of the set of all 
finite subsets of $\{(\nu, n)\ :\ \psi(\nu) \in B(\B^\#;n)\}$.  Suppose $\psi$ is a disintegration of $\B^\#$, and let $\mathcal{C} = \{C_n\}_{n < \kappa}$ is a decomposition of $\dom(\psi)$ into almost norm-maximizing chains.  
A \emph{name} of $\mathcal{C}$ is an enumeration of the set of all finite subsets of 
$\{(n, \nu)\ :\ \nu \in C_n\}$.  After suitable coding, these names can (and will) be regarded as functions in $\N^\N$.  

As noted above (as well as in \cite{Clanin.McNicholl.Stull.2019}), the proof of Theorem \ref{thm:disint.comp} is uniform.  That is, it is possible to uniformly compute a name of a disintegration of $\mathcal{B}^\#$ from a name of $\mathcal{B}^\#$ and a name of $p$. 
More formally,  there is a computable operator $\Phidis : (\N^\N)^2 \rightarrow \N^\N$ so that 
for all $f,g \in \N^\N$, if $g$ names a real $p \geq 1$ so that $p \neq 2$, and if 
$f$ names a presentation $\B^\#$ of an $L^p$ space, then $\Phidis(f,g)$ names 
a disintegration of $\B^\#$.
The proof of Theorem \ref{thm:decomp} is similarly uniform, and so there is a computable operator $\Phidecomp:(\N^\N)^3\rightarrow \N^\N$ so that for all $f,g,h\in \N^\N$ if $f$ names an $L^p$-space presentation $\B^\#$, if $g$ names a real $p \geq 1$ so that $p \neq 2$, and if $h$ names a disintegration $\phi$ of $\B^\#$, then $\Phidecomp(f,h,g)$ names an almost norm-maximizing chain decomposition of $\dom(\phi)$.
We will be using these observations and operators throughout the rest of the paper.

\subsection{The language of finite approximations} 

A function $f \in \N^\N$ may or may not be a name of a disintegration (or of a decomposition, etc.).  
Several of the forthcoming proofs require us to reason about the objects that such a function may name.  
In order to facilitate this reasoning, we introduce a formal language of presentations, disintegrations, and decompositions as follows. 

Let $\Lpres$ denote the language consisting of the following.
\begin{enumerate}
	\item Distinct constants $\overline{v_0}, \overline{v_1}, \ldots$.  
	\item A binary operation symbol $+$.
	\item For each rational scalar $\alpha$, a unary function symbol $\cdot_\alpha$.
	\item For each positive rational number $r$, unary predicates $P_{<,r}$ and $P_{>r}$.
\end{enumerate}
We write $\cdot_\alpha x$ as $\alpha x$, $P_{<,r}(x)$ as $\norm{x} < r$ and 
$P_{>,r}(x)$ as $\norm{x} > r$. 

Let $f \in \N^{\leq \N}$.  (Here $f$ should be thought of a possible name of a Banach space presentation.)  Write $f \models \norm{\sum_{j \leq M} \alpha_j \overline{v_j}} < r$ if there exists $n$ so that 
$(\langle \alpha_0, \ldots, \alpha_M \rangle,n) \in \ran(f)$ and $r$ is the right endpoint of $I_n$.  We similarly define $f \models \norm{\sum_{j \leq M} \alpha_j \overline{v_j}} > r$.
If $n = \langle \alpha_0, \ldots, \alpha_N \rangle$, then let $\tau_n = \sum_{j \leq N} \alpha_j \overline{v_j}$.  

Let $\Ldisint$ consist of $\Lpres$ together with a family of distinct $0$-ary predicate symbols
$\{S_\nu\}_{\nu \in \N^{<\N}}$ and a family of distinct constants 
$\{\phi_\nu\}_{\nu \in \N^{<\N}}$.    For convenience, and to make the intended meaning clear, write $S_\nu$ as $\nu \in S$.  (We abuse our language; we will use these predicates to mimic subsets of the Baire space.  Although it makes the language heavier, it will allow to compress and unify our formal arguments later in the paper.)  

Suppose $f,g \in \N^\N$.  The intended interpretation of $g$ is as follows. It will encode our current guess on  vectors in the vector subtree. Write $(f,g) \models S_\nu$ if there exists $n$ so that 
$\langle \langle \nu \rangle, n \rangle\in \ran(g)$. 

Let $\tau$ be a term of $\Ldisint$.  Write $\tau$ in the form $\tau_0 + \tau_1$ where 
$\tau_0 = \sum_{j < M} \alpha_j \overline{v_j}$ and $\tau_1 = \sum_{j < M'} \beta_j \phi_{\nu_j}$.  
Write $(f,g) \models \norm{\tau} < r$ if for each $j < M'$ there exist 
$n_j$ so that $\ran(g)$ contains a code of $\{(\nu_j, n_j)\}$ and so that 
\[
f \models \norm{\sum_j \alpha_j \overline{v_j} + \sum_j \beta_j \left(\sum_{j'} \gamma_{j,j'} \overline{v_{j'}}\right)} < r - \sum_j |\beta_j| r_j
\]
where $r_j$ is the radius of the $n_j$-th rational ball of a Banach space presentation and $\langle \gamma_{j,0}, \ldots \rangle$ encodes the center of this ball (again, these objects are independent of the presentation).
We similarly define the meaning of $(f,g) \models \norm{\tau} < r$ (replace subtraction with addition in the above inequality).  

Let $\Lchain$ consist of $\Ldisint$ together with a family of distinct $0$-ary predicate symbols
$\{C_{n, \nu}\}_{n,\nu}$.  For convenience, write $\nu \in C_n$ for $C_{n,\nu}$.  
If $f,g,h \in \N^\N$, write $(f,g,h) \models \nu \in C_n$ if $\ran(h)$ contains a code of 
$\{(n, \nu)\}$.

Suppose $\B^\#$ is a presentation of a Banach space, and let $\sigma$ be a term of $\Ldisint$.  Write $\sigma$ in form 
$\sum_{j < M} \alpha_j \overline{v_j}$.  Let $\sigma[\B^\#] = \sum_{i < M} \alpha_j v_j$ where $v_j$ is the $j$-th distinguished vector of $\B^\#$.
Suppose $\phi$ is a vector tree of $\B^\#$, and let $\tau$ be a term of 
$\Ldisint$.  Write $\tau$ in the form $\sum_{j < M_0} \alpha_j \overline{v_j} + \sum_{j < M_1} \beta_j \phi_{\nu_j}$.  
We then let $\tau[\B^\#, \phi] = \sum_{j <M_0} \alpha_j v_j + \sum_{j < M_1} \beta_j \phi_(\nu_j)$.
The following is an easy consequence of these definitions.

\begin{proposition}\label{prop:interpretation}
Suppose $f$ names a Banach space presentation $\B^\#$ and that $g$ names a vector tree $\psi$ of $\B^\#$.  Let $\sigma$, $\tau$ be terms of $\Lpres$, $\Ldisint$ respectively.
\begin{enumerate}
	\item $f \models \norm{\sigma} < r$ if and only if $\sigma[\B^\#] < r$.
	
	\item $f \models \norm{\sigma} > r$ if and only if $\sigma[\B^\#] > r$.
	
	\item $(f,g) \models \norm{\tau} < r$ if and only if $\tau[\B^\#, \psi] < r$.
	
	\item $(f,g) \models \norm{\tau} > r$ if and only if $\tau[\B^\#, \psi] > r$.	
\end{enumerate}
\end{proposition}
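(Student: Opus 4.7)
The plan is to unwind the definitions of $\models$ in $\Lpres$ and $\Ldisint$ and show that they faithfully encode the intended norm inequalities; this reduces to density of rational balls (for (1)/(2)) and the triangle inequality (for (3)/(4)).

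For parts (1) and (2), I would first observe that a term $\sigma = \sum_{j \leq M} \alpha_j \overline{v_j}$ of $\Lpres$ evaluates to a rational vector of $\B^\#$ whose code $m_\sigma = \langle \alpha_0,\ldots,\alpha_M\rangle$ is exactly the first coordinate used in the definition of $\models$. Since $f$ names $\B^\#$, $\ran(f)$ is its diagram, so $f \models \norm{\sigma} < r$ is equivalent to the existence of some $n \in \N$ such that $\norm{\sigma[\B^\#]}_\B \in I_n$ and $\max(I_n) = r$. The forward implication is then immediate. For the converse, given $\norm{\sigma[\B^\#]}_\B < r$ and $r$ rational, one can always choose a rational $\rho > 0$ with $2\rho < r - \norm{\sigma[\B^\#]}_\B$ and pick $I_n = (r - 2\rho, r)$; this puts the required pair into the diagram, hence into $\ran(f)$. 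Part (2) is symmetric, using sub-intervals of the form $(r, r + 2\rho)$.

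For parts (3) and (4), I would decompose $\tau = \tau_0 + \tau_1$ with $\tau_0 = \sum_{j<M}\alpha_j \overline{v_j}$ and $\tau_1 = \sum_{j<M'}\beta_j \phi_{\nu_j}$ and exploit that, since $g$ names $\psi$, the containment $\ran(g) \ni \{(\nu_j,n_j)\}$ translates exactly to $\psi(\nu_j) \in B(\B^\#;n_j)$. In the forward direction of (3), write $\psi(\nu_j) = c_j + e_j$ with $\norm{e_j} < r_j$ (where $c_j$ and $r_j$ are the center and radius of $B(\B^\#;n_j)$); applying part (1) to the associated $\Lpres$-inequality and then the triangle inequality yields
\[
\norm{\tau[\B^\#,\psi]} \;\le\; \Bigl\lVert \tau_0[\B^\#] + \sum_j \beta_j c_j \Bigr\rVert + \sum_j |\beta_j|\, r_j \;<\; r.
\]
For the converse, set $\epsilon = r - \norm{\tau[\B^\#,\psi]} > 0$ and use density of rational vectors of $\B^\#$ (which follows from the linear density of the distinguished points) to choose, for each $j$, a rational ball $B(\B^\#;n_j) \ni \psi(\nu_j)$ with rational radius $r_j$ so small that $2 \sum_j |\beta_j| r_j < \epsilon$. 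A triangle-inequality estimate then gives $\lVert \tau_0[\B^\#] + \sum_j \beta_j c_j \rVert < r - \sum_j |\beta_j| r_j$, and part (1) produces the required $\Lpres$-witness for $f$. Part (4) is handled by the symmetric argument with reversed inequalities.

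The argument is mostly bookkeeping. The only point requiring any real care is the converse of (3) and (4): selecting the radii $r_j$ small enough that the error incurred in replacing $\psi(\nu_j)$ by its rational-ball center $c_j$ does not eat up the slack $r - \norm{\tau[\B^\#,\psi]}$. Since the rational vectors of $\B^\#$ are norm-dense in $\B$, this selection is routine.
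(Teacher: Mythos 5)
The paper gives no proof of this proposition at all (it is asserted as ``an easy consequence of these definitions''), and your unwinding of the semantics is exactly the intended argument: the identification of the code $\langle \alpha_0,\ldots,\alpha_M\rangle$ with the first coordinate of diagram entries, the decomposition $\tau = \tau_0 + \tau_1$, the triangle-inequality estimates in both directions of (3)/(4), and the density argument for shrinking the radii $r_j$ so that $2\sum_j |\beta_j| r_j$ stays below the slack are all correct. The one slip is in your converse of (1): to place the pair $(\langle \alpha_0,\ldots,\alpha_M\rangle, n)$ into the diagram you need $\norm{\sigma[\B^\#]}_\B \in I_n = (r-2\rho,\, r)$, which requires $2\rho > r - \norm{\sigma[\B^\#]}_\B$, not $2\rho < r - \norm{\sigma[\B^\#]}_\B$ as written; with your choice the interval lies entirely to the right of $\norm{\sigma[\B^\#]}_\B$ and the witness fails. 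The symmetric slip occurs in (2). Both are repaired by taking $\rho$ large enough (so that $r - 2\rho < 0 \leq \norm{\sigma[\B^\#]}_\B < r$), after which the proof stands.
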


\section{The complexity of naming a disintegration}\label{sec:compl.disint}

In this section, we prove the following which is a stepping stone towards the proof of Main Theorem \ref{mthm:Lp}.

\begin{theorem}\label{thm:disint}
There is a $\Pi_2^0$ predicate $\Disint \subseteq (\N^\N)^3$ so that 
for all $f,g,h \in \N^\N$, if $f$ names a Banach space presentation $\B^\#$, and if $h$ names a real $p \geq 1$, then $\Disint(f,g,h)$ if and only if $g$ names a formal $L^p$-disintegration of 
$\B^\#$.
\end{theorem}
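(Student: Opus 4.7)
The plan is to present $\Disint(f,g,h)$ as a conjunction of $\Pi^0_2$ clauses, each formalizing one part of the definition of an $L^p$-formal disintegration.  Throughout I use the languages $\Lpres$ and $\Ldisint$ of Section \ref{sec:prelim} and Proposition \ref{prop:interpretation}: for any $\Ldisint$-term $\tau$ and rational $r$, the statements $(f,g) \models \norm{\tau} < r$, $(f,g) \models \norm{\tau} > r$, and $g \models S_\nu$ are all $\Sigma^0_1$ in the names, with $\Pi^0_1$ negations.  Let $\psi$ denote the alleged vector tree, with $\dom(\psi) = \{\nu : g \models S_\nu\}$ and $\psi(\nu)$ the common point of the balls enumerated by $g$ at $\nu$ (well-defined once the clauses below hold).

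The clauses split into two groups.  The first group asserts that $g$ is a valid name of a vector tree of $\B^\#$: (i) for each $\nu$ and each $k$, if $g \models S_\nu$ then eventually an enumerated ball at $\nu$ has radius below $2^{-k}$; (ii) any two enumerated balls at a common node overlap; (iii) $\dom(\psi)$ is prefix-closed; (iv) distinct nodes in $\dom(\psi)$ are eventually separated by disjoint enumerated balls (injectivity of $\psi$); and (v) every finite subset of $\{(\nu, n) : \psi(\nu) \in B(\B^\#; n)\}$ is eventually enumerated.  The second group encodes the disintegration axioms: (vi) $\psi$ omits $\zerovec$, i.e.\ for each $\nu$ with $g \models S_\nu$ some enumerated ball at $\nu$ excludes the origin; (vii) $L^p$-formally separating --- for every finite pairwise-incomparable tuple $\nu_1,\dots,\nu_n$ in $\dom(\psi)$ and every finite tuple of rational scalars, the identity $\norm{\sum_i \alpha_i \phi_{\nu_i}}^p = \sum_i |\alpha_i|^p \norm{\phi_{\nu_i}}^p$ holds (both sides are computable reals in $(f,g,h)$ since $h$ names $p$, and equality of computable reals is $\Pi^0_1$); (viii) summativity; (ix) linear density --- every distinguished vector $v_m$ of $\B^\#$ lies within $2^{-k}$ of some rational linear combination of $\psi$-vectors.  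Each clause other than (viii) is plainly $\Pi^0_2$ or simpler, using the standard facts that $\Pi^0_1 \vee \Pi^0_1$ is $\Pi^0_1$ and $\Pi^0_1 \vee \Sigma^0_1$ is $\Pi^0_2$.

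The main obstacle is clause (viii), since summativity concerns a possibly infinite sum of children.  I would express it as the conjunction of (viii-a): for each $\nu$ in $\dom(\psi)$ and each $k$, eventually some finite set $J$ of enumerated children of $\nu$ appears with witness $\norm{\phi_\nu - \sum_{\nu' \in J} \phi_{\nu'}} < 2^{-k}$; and (viii-b): for every such finite $J$, $\sum_{\nu' \in J} \norm{\phi_{\nu'}}^p \leq \norm{\phi_\nu}^p$.  Clause (viii-a) is $\Pi^0_2$ and (viii-b) is $\Pi^0_1$, and together with (vii) they imply genuine summativity: by (vii) and (viii-b), $\sum_j \norm{\psi(\nu'_j)}^p$ is a bounded sum of non-negative terms hence converges, and by the disjoint-support norm identity the partial sums $\sum_{\nu' \in J} \psi(\nu')$ form a Cauchy sequence in $\B$ converging to some limit $L$ with $\norm{L} \leq \norm{\psi(\nu)}$; clause (viii-a) then forces $L = \psi(\nu)$.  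The predicate $\Disint(f,g,h)$ is the conjunction of (i)--(ix), which is $\Pi^0_2$; soundness and completeness are routine applications of Proposition \ref{prop:interpretation} once $\psi$ is defined via (i)--(ii), with completeness using that every strict norm inequality enjoyed by a genuine $L^p$-formal disintegration is eventually witnessed by the enumeration $g$.
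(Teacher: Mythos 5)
Your proposal is correct in substance and follows the same overall architecture as the paper's proof (syntactic $\Pi^0_2$ clauses interpreted via Proposition \ref{prop:interpretation}, a vector-tree predicate, separation expressed as the non-existence of rational counterexamples, approximate summativity, linear density), but it handles the one genuinely delicate point --- upgrading approximate summativity to true summativity --- by a different mechanism. The paper adds the condition that every finite sum of children is an $L^p$-formal \emph{component} of the parent (condition (3) of its predicate) and then invokes the monotonicity argument of Lemma \ref{lm:summative}: for $F \subseteq F'$ the residual norms $\norm{\psi(\nu) - \sum_{\mu \in F}\psi(\mu)}$ decrease, so smallness along one cofinal family gives convergence of the whole net. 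You instead impose only the $\Pi^0_1$ inequality (viii-b), $\sum_{\nu' \in J}\norm{\psi(\nu')}^p \leq \norm{\psi(\nu)}^p$, which is weaker than the formal-component condition but still suffices; your route is arguably more elementary. One step of your argument is compressed and should be spelled out, since the sets $J_k$ witnessing (viii-a) need not be cofinal in the net of finite subsets: from (viii-b) one gets $T := \sum_j \norm{\psi(\nu'_j)}^p \leq \norm{\psi(\nu)}^p$, from (viii-a) one gets $\sum_{j \in J_k}\norm{\psi(\nu'_j)}^p = \norm{\sum_{J_k}\psi(\nu'_j)}^p > (\norm{\psi(\nu)} - 2^{-k})^p$, hence the omitted mass $T - \sum_{J_k}\norm{\psi(\nu'_j)}^p \leq \norm{\psi(\nu)}^p - (\norm{\psi(\nu)}-2^{-k})^p \to 0$, and since $\norm{L - \sum_{J_k}\psi(\nu'_j)}^p$ is bounded by this omitted mass, $\sum_{J_k}\psi(\nu'_j)$ converges to $L$ as well as to $\psi(\nu)$, forcing $L = \psi(\nu)$. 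Without (viii-b) this fails (take $\psi(\nu) = \sum_{j\geq 2} 2^{-j}e_j$ in $\ell^p$ with children $2^{-j}e_j$ for $j \geq 1$), so you were right to include it.

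Two small repairs are needed. First, clause (viii-a) must be restricted to \emph{nonterminal} nodes $\nu$ (as in condition (4) of the paper's predicate); as written, a terminal node has only $J = \emptyset$ available, which would force $\norm{\psi(\nu)} < 2^{-k}$ for all $k$ and, together with your clause (vi), reject every disintegration with a leaf, e.g.\ any disintegration of $\ell^p_n$. Second, in your vector-tree clause (ii), mere pairwise overlap of the enumerated balls at a node (even with radii tending to $0$) only places the limit point in the \emph{closure} of each enumerated ball, so $g$ could enumerate a pair $(\nu,n)$ with $\psi(\nu) \in \partial B(\B^\#;n) \setminus B(\B^\#;n)$ and fail to be a name; you need the nested-closure refinement of Lemma \ref{lm:vectortree} (for any two enumerated balls there is an enumerated ball whose closure lies in both). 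With these adjustments the predicate is a finite conjunction of $\Pi^0_2$ clauses and both directions of the equivalence go through as you indicate; your explicit inclusion of the ``omits $\zerovec$'' clause (vi) is a point in your favor, as it is genuinely needed.
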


The proof of this fact is essentially reduced to a careful analysis of the definition of a formal $L^p$-disintegration. However, since we are dealing with 
separable spaces rather than countable discrete algebras, a brute-force quantifier counting would not suffice, for only positive existential formulae and their (infinite) disjunction will correspond to c.e.~facts. However, in our case with some care the complexity will be equal to the natural (`naive') estimate, but proving this requires some care. We first prove two technical lemmas.

\begin{lemma}\label{lm:vectortree}
There is a $\Pi_2^0$-predicate $\VectorTree \subseteq \N^\N \times \N^\N$ so that 
whenever $f \in \N^\N$ names a Banach space presentation $\B^\#$, 
$\VectorTree(f,g)$ if and only if $g$ names a vector tree of $\B^\#$.
\end{lemma}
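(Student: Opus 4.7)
My approach is to express $\VectorTree(f,g)$ as a finite conjunction of $\Pi_2^0$ clauses on $(f,g)$ capturing precisely the defining features of ``$g$ names a vector tree of $\B^\#$''. Set $E_g = \bigcup \ran(g)$, the positive diagram implicit in $g$, and write $c_n, r_n$ for the center and radius of the rational ball $B(\B^\#;n)$. Assuming $f$ names a Banach space presentation $\B^\#$, the task is to characterize when there exists a vector tree $\psi$ of $\B^\#$ such that $E_g = \{(\nu,n) : \psi(\nu) \in B(\B^\#;n)\}$ and $\ran(g)$ is exactly the set of finite subsets of $E_g$.

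I would conjoin the following clauses: (a) pairwise compatibility---whenever $(\nu,n_1),(\nu,n_2) \in E_g$, the rational balls overlap, i.e.\ $\norm{c_{n_1}-c_{n_2}} < r_{n_1}+r_{n_2}$; (b) shrinking strict containment---for each $(\nu,m)\in E_g$ and each rational $\epsilon>0$ there exists $n$ with $(\nu,n)\in E_g$, $r_n<\epsilon$, and $\norm{c_n-c_m}+r_n < r_m$; (c) saturation---if $(\nu,n)\in E_g$ and $\norm{c_n-c_m}+r_n<r_m$, then $(\nu,m)\in E_g$; (d) the domain $\{\nu : \exists n\,(\nu,n)\in E_g\}$ is closed under initial segments; (e) injectivity witnessed by a pair of enumerated balls with strictly separated centers for any two distinct $\nu_0,\nu_1$ in the domain; (f) every finite subset of $E_g$ lies in $\ran(g)$. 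Membership in $E_g$ is $\Sigma_1^0$ in $g$, and every norm inequality appearing above is $\Sigma_1^0$ in $f$ via Proposition \ref{prop:interpretation}; hence each clause has the form $\forall x\,[A(x)\to B(x)]$ with $A,B$ both $\Sigma_1^0$, so each is $\Pi_2^0$, and the finite conjunction is $\Pi_2^0$.

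For correctness, the $(\Leftarrow)$ direction is a routine verification against the definitions (using density of rational vectors of $\B^\#$ to check (b), and strict positivity of distances for (e)). For $(\Rightarrow)$, given (a)--(f), for each $\nu$ with some enumerated ball I iterate (b) to produce a descending chain of strictly contained enumerated balls with radii tending to $0$; the corresponding centers form a Cauchy sequence converging to a point $v_\nu \in \B$. Pairwise compatibility (a) places $v_\nu$ in the closure of every enumerated ball for $\nu$, and combining (b) with (c) upgrades this to $v_\nu \in B(\B^\#;m)$ (open) for every $(\nu,m)\in E_g$; saturation (c) also yields the converse, since if $v_\nu \in B(\B^\#;m)$ then some small enumerated ball for $\nu$ is strictly contained in $B(\B^\#;m)$, forcing $(\nu,m)\in E_g$. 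Setting $\psi(\nu)=v_\nu$ on the domain extracted from $E_g$, clauses (d) and (e) make $\psi$ a vector tree, and (f) matches the finite-subset enumeration condition. The main subtle point is the strictness built into (b): asking merely for arbitrarily small enumerated balls would only place $v_\nu$ in the closures of the enumerated open balls, not inside them, so the strict-containment phrasing is essential; this stronger version is nevertheless satisfied by any bona fide name because rational vectors lie densely in $\B^\#$.
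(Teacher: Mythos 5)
Your proof is correct and takes essentially the same route as the paper's: both characterize names of vector trees by demanding arbitrarily small enumerated balls whose closures are strictly contained in every enumerated ball (so the centers form a Cauchy sequence whose limit lies inside each open ball), a saturation clause for the converse inclusion, a tree condition on the domain, and formally disjoint enumerated balls to witness injectivity, each clause being visibly $\Pi_2^0$. The differences are only organizational---your pairwise-overlap clause (a) stands in for the paper's common-refinement clause, and your clause (b) folds together the paper's separate ``arbitrarily small radius'' and ``closure strictly contained'' conditions---and your remark on the necessity of strict containment matches the role of the paper's $\operatorname{IC}$ predicate.
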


\begin{proof}
We first claim that there is a $\Sigma_1^0$ predicate $\IncludesClosure \subseteq \N^\N \times \N^2$ so that 
whenever $f$ is a name of a Banach space presentation $\B^\#$ and $n_0, n_1 \in \N$,  
$\IncludesClosure(f; n_0, n_1)$ if and only if $B(\B^\#; n_0) \supseteq \overline{B}(\B^\#; n_1)$.  This is immediate from the following observation:
if $u$ and $v$ are vectors of a Banach space $\B$, and if $r$ and $s$ are positive rational numbers, then $\overline{B}(u;r) \subseteq B(v;r)$ if and only if 
$\norm{u - v}_\B + r < s$.  So, by Proposition \ref{prop:interpretation}, we may define $\IncludesClosure(f;n_0, n_1)$ to hold if 
\[
f \models \norm{\sum_{j < M_0} \alpha_{0,j} \overline{v_j} - \sum_{j < M_1} \alpha_{1,j} \overline{v_j} } < r_1 - r_0
\]
where $\langle \alpha_{k,0}, \ldots \rangle$ encodes the center of the $n_k$-th rational ball and $r_k$ is the radius of this ball.

When $g \in \N^\N$, let $S_g$ denote the set of all $\nu \in \N^{< \N}$ so that $g$ contains 
a code of $\{(\nu, n)\}$ for some $n$.  When $g \in \N^\N$, and when 
$\nu \in S_g$, let $S_{g,\nu}$ denote the set of all $n \in \N$ so that 
$\ran(g)$ contains a code of $\{(\nu, n)\}$.

\newcommand{\FD}{\operatorname{FD}}
Let us say that two rational balls are \emph{formally disjoint} if the distance between their centers 
is larger than the sum of their radii.  It follows that there is a $\Sigma_1^0$ predicate $\FD \subseteq \N^\N \times \N^2$ so that whenever $f$ names a Banach space presentation $\B^\#$, 
$\FD(f;m,n)$ if and only if $B(\B^\#; m)$ and $B(\B^\#; n)$ are formally disjoint.

Let $\VectorTree(f,g)$ if and only if all of the following hold.
\begin{enumerate}
	\item $S_g$ is a tree.\label{lm:vectortree::cond.1}
	
	\item For every $\nu \in S_g$ and every $k \in \N$, there is an $n \in S_{g,\nu}$ so that 
	the radius of the $n$-th rational ball is at most $2^{-k}$. \label{lm:vectortree::cond.2}
	
	\item For all $\nu \in S_g$ and all $n,n' \in S_{g,\nu}$, there exists $m \in S_{g,\nu}$ so that 
	$\IncludesClosure(f;m,n)$ and $\IncludesClosure(f;m,n')$.  \label{lm:vectortree::cond.3}
	
	\item For all $\nu \in S_g$ and all $m,n \in \N$, if $m \in S_{g, \nu}$, and if 
	$\IncludesClosure(f;m,n)$, then 
	$n \in S_{g,\nu}$. \label{lm:vectortree::cond.4}
	
	\item For all distinct $\nu_0, \nu_1 \in S_g$, there exist $n_0 \in S_{g,n_0}$ and $n_1 \in S_{g,n_1}$ so that $\FD(f;n_0,n_1)$.  \label{lm:vectortree::cond.5}	
\end{enumerate} 
Since $\IncludesClosure$ and $\FD$ are $\Sigma_1^0$, it follows that $\VectorTree$ is $\Pi_2^0$.  

Suppose $f$ names $\B^\#$.  Let $g \in \N^\N$.  If $g$ names a vector tree of $\B^\#$, then 
it is routine to verify that $\VectorTree(f,g)$ holds.  
So, suppose $\VectorTree(f,g)$ holds.  
Let $S = S_g$.  By (\ref{lm:vectortree::cond.2}), for each $k \in \N$ and each $\nu \in S$, there is an 
$n_k \in S_{g,\nu}$ so that the radius of $B(\B^\#;n_k)$ is at most $2^{-k}$.  Let $c_k$ denote the 
center of $B(\B^\#;n_k)$.  It follows from (\ref{lm:vectortree::cond.3}) that 
$\{c_k\}_k$ is a Cauchy sequence; let $\psi(\nu)$ denote its limit.  

We now show that 
for each $\nu \in S$, $S_{g,\nu} = \{n\ :\ \psi(\nu) \in B(\B^\#; n)\}$.  Suppose $n \in S_{g,\nu}$.  
By (\ref{lm:vectortree::cond.3}), there is an $m \in S_{g,\nu}$ so that 
$\overline{B(\B^\#; m)} \subseteq B(\B^\#; n)$.  
By (\ref{lm:vectortree::cond.3}) again, $B(\B^\#;n_k) \cap \overline{B(\B^\#; m)} \neq \emptyset$
for each $k$.  Thus, $\psi(\nu) \in \overline{B(\B^\#; m)}$.  
Conversely, suppose $\psi(\nu) \in B(\B^\#; n)$.  There exists $k$ so that 
$\overline{B(\B^\#;n_k)} \subseteq B(\B^\#; n)$.  Hence, by (\ref{lm:vectortree::cond.4}), 
$n \in S_{g, \nu}$.

It follows from (\ref{lm:vectortree::cond.5}) that $\psi$ is injective.  Thus, $\psi$ is a vector tree.
\end{proof}

\begin{lemma}\label{lm:summative}
Suppose $\psi$ is an $L^p$-formally separating vector tree of $\B$.  Suppose also that for every nonterminal $\nu \in \dom(\psi)$ and finite set $F \subseteq \dom(\psi)$ of children of $\nu$, $\sum_{\mu \in F} \psi(\mu)$ is an $L^p$-formal component of $\psi(\nu)$.  
Then, $\psi$ is summative if and only if for every $\epsilon > 0$ and every nonterminal $\nu \in \dom(\psi)$, there is a finite set $F \subseteq \dom(\psi)$ of children of $\nu$ so that 
\begin{equation}
\norm{\psi(\nu) - \sum_{\mu \in F} \psi(\mu)}_\B < \epsilon. \label{ineq:sum}
\end{equation}
\end{lemma}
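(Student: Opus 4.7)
The plan is to pivot the whole argument on one clean identity that follows from the two structural hypotheses. For any nonterminal $\nu$ and any finite set $F$ of children of $\nu$, the hypothesis that $\sum_{\mu\in F}\psi(\mu)$ is an $L^p$-formal component of $\psi(\nu)$ gives
\[
\norm{\psi(\nu)}_\B^p \;=\; \norm{\psi(\nu)-\sum_{\mu\in F}\psi(\mu)}_\B^p + \norm{\sum_{\mu\in F}\psi(\mu)}_\B^p,
\]
and $L^p$-formal separation of the incomparable vectors $\{\psi(\mu)\}_{\mu\in F}$ collapses the last term to $\sum_{\mu\in F}\norm{\psi(\mu)}_\B^p$. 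So the key identity is
\[
\norm{\psi(\nu)-\sum_{\mu\in F}\psi(\mu)}_\B^p \;=\; \norm{\psi(\nu)}_\B^p - \sum_{\mu\in F}\norm{\psi(\mu)}_\B^p,
\]
valid for every finite set $F$ of children of $\nu$.

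The forward direction is immediate: if $\psi$ is summative, i.e.\ the net of finite partial sums of $\{\psi(\mu)\}_{\mu}$ (over the children of $\nu$) converges to $\psi(\nu)$, then for every $\epsilon>0$ some finite $F$ witnesses the required approximation~(\ref{ineq:sum}).

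For the converse, I would enumerate the children of $\nu$ as $\mu_0,\mu_1,\dots$ (finitely many or countably many) and show that the net of partial sums converges unconditionally to $\psi(\nu)$. Given $\epsilon>0$, choose by hypothesis a finite set $F_\epsilon$ of children with $\norm{\psi(\nu)-\sum_{\mu\in F_\epsilon}\psi(\mu)}_\B<\epsilon$. The key identity forces $\sum_{\mu\in F_\epsilon}\norm{\psi(\mu)}_\B^p > \norm{\psi(\nu)}_\B^p-\epsilon^p$, and since the partial sums $\sum_{\mu\in F}\norm{\psi(\mu)}_\B^p$ are monotone in $F$, for every finite $F'\supseteq F_\epsilon$ we have
\[
\norm{\psi(\nu)-\sum_{\mu\in F'}\psi(\mu)}_\B^p \;=\; \norm{\psi(\nu)}_\B^p - \sum_{\mu\in F'}\norm{\psi(\mu)}_\B^p \;<\; \epsilon^p.
\]
This shows the net of partial sums over finite subsets of children converges to $\psi(\nu)$, i.e.\ $\psi$ is summative at $\nu$. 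As $\nu$ was arbitrary, summativity follows.

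The only subtle point I anticipate is a definitional one: the sum $\sum_{\nu'}\psi(\nu')$ in the definition of \emph{summative} has to be read as an unordered (net) sum over finite subsets of children, which is exactly what the key identity delivers once the approximation hypothesis is in hand. Everything else is a one-line algebraic consequence of formal separation together with the formal-component hypothesis, so no obstacle of substance remains beyond writing the display carefully.
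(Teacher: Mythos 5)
Your proof is correct and follows essentially the same route as the paper's: both derive, from the formal-component hypothesis together with formal separation, that enlarging the finite set $F$ of children can only decrease the error $\norm{\psi(\nu)-\sum_{\mu\in F}\psi(\mu)}_\B$, which yields convergence of the net of partial sums. The only cosmetic difference is that you expand $\norm{\sum_{\mu\in F}\psi(\mu)}_\B^p$ all the way to $\sum_{\mu\in F}\norm{\psi(\mu)}_\B^p$, while the paper only splits off $\norm{\sum_{\mu\in F'-F}\psi(\mu)}_\B^p$.
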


\begin{proof}
Let $S = \dom(\psi)$.  
The first direction is trivial.  Let $\epsilon > 0$, and choose a finite set $F \subseteq S$ of children of $\nu$ so that (\ref{ineq:sum}) holds.  Suppose $F'$ is a finite set of children of $\nu$ so that 
$F \subseteq F' \subseteq S$.  We claim that 
\begin{equation}
\norm{\psi(\nu) - \sum_{\mu \in F} \psi(\mu)}_\B \geq \norm{\psi(\nu) - \sum_{\mu \in F'} \psi(\mu)}_\B.\label{ineq:F}
\end{equation}
For, since $\sum_{\mu \in F'} \psi(\mu)$ is an $L^p$-formal component of $\psi(\nu)$, 
\[
\norm{\psi(\nu) - \sum_{\mu \in F'} \psi(\mu)}_\B^p = \norm{\psi(\nu)}_\B^p - \norm{\sum_{\mu \in F'} \psi(\mu)}_\B^p.
\]
Since $\psi$ is $L^p$-formally separating and $F \subseteq F'$, 
\[
\norm{\sum_{\mu \in F'} \psi(\mu)}_\B^p = \norm{\sum_{\mu \in F} \psi(\mu)}_\B^p + \norm{\sum_{\mu \in F' - F} \psi(\mu)}_\B^p.
\]
Thus, (\ref{ineq:F}).

It follows that $\psi(\nu) = \sum_\mu \psi(\mu)$ where $\mu$ ranges over the children of $\nu$ in $S$.
\end{proof}

\begin{proof}[Proof of Theorem \ref{thm:disint}]
Let $f,g \in \N^\N$, and let $\tau_0, \ldots, \tau_n$ be terms of $\Ldisint$.  
Let us say that $(f,g)$ \it ensures 
$\norm{\ }$ is $p$-additive on $(\tau_0, \ldots, \tau_n)$\rm if 
there do not exist rational numbers $r_0, \ldots, r_{n+1}$ and $m \in \N$ so that one of the following holds.
\begin{enumerate}
	\item $(f,g) \models r_{n+1} > \norm{\tau_0 + \ldots + \tau_n}$, 
	$(f,g) \models r_j < \norm{\tau_j}$, and 
	$r_{n+1}^p < r_0^p + \ldots + r_n^p$.  
	
	\item $(f,g) \models r_{n+1} < \norm{\tau_0 + \ldots + \tau_n}$, 
	$(f,g) \models r_j >\norm{\tau_j}$, and 
	$r_{n+1}^p > r_0^p + \ldots + r_n^p$. 
\end{enumerate}

Let us say that $(f,g,h)$ ensures $\phi$ is \emph{formally separating} if for all rational scalars $\alpha_0, \ldots, \alpha_n$ and all pairwise incomparable
$\nu_0, \ldots, \nu_n \in \N^{< \N}$ so that $(f,g) \models \nu_j \in S$ for each $j$, 
$(f,g,h)$ ensures 
$\norm{\ }$ has the additivity of the real denoted by $h$ on $(\alpha_0\phi_{\nu_0}, \ldots, \alpha_n\phi_{\nu_n})$.

Suppose $f$ names $\B^\#$ and $g$ names a vector tree $\psi$ of $\B^\#$. 
It follows from Proposition \ref{prop:interpretation} and a simple continuity argument that 
$(f,g,h)$ ensures $\norm{\tau_0 + \ldots + \tau_n}$ is $p$-additive if and only if 
\[
\norm{\tau_0[\B^\#, \psi] + \ldots + \tau_n[\B^\#, \psi]}_{\B}^p = 
\norm{\tau_0[\B^\#,\psi]}_{\B}^p + \ldots + \norm{\tau_n[\B^\#, \psi]}_{\B}^p.
\] 
If $h$ names a real $p$, it then follows that $(f,g,h)$ ensures $\phi$ is formally separating if and only if $\psi$ is formally $L^p$-separating.  

So, let $\Disint(f,g,h)$ if and only if the following hold.  
\begin{enumerate}
	\item $\VectorTree(f,g)$.
	
	\item $(f,g,h)$ ensures $\phi$ is formally separating.  
	
	\item For all $\nu_0, \ldots, \nu_n \in \N^{< \N}$ and all rational scalars $\alpha, \beta$, if 
	$(f,g) \models \nu_j \in S$ for each $j$, and if 
	$\nu_1, \ldots, \nu_n$ are distinct children of $\nu_0$, then $(f,g,h)$ ensures 
	$\norm{\alpha (\phi(\nu_0) - \sum_{1 \leq j \leq n} \phi(\nu_j)) + \beta \sum_{1 \leq j \leq n} \phi(\nu_j)}$ has the additivity of the real denoted by $h$.
	
	\item Whenever $k \in \N$ and $\nu$ is a node so that $(f,g,h) \models \nu' \in S$ for at least 
	one $\nu' \supset \nu$, there exist distinct children $\nu_0, \ldots, \nu_n$ of $\nu$ so that 
	$(f,g) \models \nu_j \in S$ and so that $(f,g) \models \norm{\phi(\nu) - \sum_{j \leq n} \phi(\nu_j)} < 2^{-k}$.
	
	\item For every $j,k \in \N$, there exist $\nu_0, \ldots, \nu_n \in \N^{< \N}$ and rational scalars 
	$\alpha_0, \ldots, \alpha_n$ so that 
	$(f,g) \models \nu_m \in S$ for each $m$ and so that $(f,g) \models \norm{v_j - \sum_{m \leq n} \alpha_m \phi(\nu_m)} < 2^{-k}$.  
\end{enumerate}

Suppose $f$ names $\B^\#$ and $h$ names a real $p \geq 1$.  
If $g$ names a formal $L^p$-disintegration of $\B^\#$, then it is routine to verify $\Disint(f,g,h)$.  
So, suppose $\Disint(f,g,h)$.  Thus, $g$ names a vector tree $\psi$.  Let $S = \dom(\psi)$.  By what has just been observed, 
$\psi$ is formally $L^p$-separating.  
It also follows that $\B$, $p$, $\psi$ satisfy the hypotheses of Lemma \ref{lm:summative}, and so $\psi$ is summative.  Finally, the last condition of the definition of 
$\Disint$ ensures that the range of $\psi$ is linearly dense.
\end{proof}

\section{Index sets of computable presentations of Lebesgue spaces with known exponent}\label{sec:index.exp.known}

The goal of this section is to prove the following three theorems.

\begin{theorem}\label{thm:Lp}
Suppose $p \geq 1$ is a computable real.  Then, the set of all names of $L^p$ space presentations 
is $\Pi_2^0$-complete.
\end{theorem}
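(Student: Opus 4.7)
The plan splits naturally into an upper bound and a matching lower bound. For the upper bound I would handle $p=2$ separately using the parallelogram law; for $p\neq 2$ I would use the formal disintegration machinery developed in Section~\ref{sec:compl.disint}.

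For the upper bound when $p=2$, every separable Hilbert space is an $L^2$ space and vice versa, and Hilbert spaces are cut out of the class of Banach spaces by the parallelogram law, which translates into a $\Pi_2^0$ condition on names as in Lemma~\ref{lm:hilbert}. For $p\neq 2$, fix a computable name $h$ of $p$. I claim that $f$ names an $L^p$ space presentation if and only if
\[
f \text{ names a Banach space presentation} \ \wedge\ \Disint\bigl(f,\Phidis(f,h),h\bigr).
\]
In the forward direction, if $f$ names an $L^p$ space presentation $\B^\#$, then by the uniform form of Theorem~\ref{thm:disint.comp}, $\Phidis(f,h)$ names an $L^p$-formal disintegration of $\B^\#$, and so by Theorem~\ref{thm:disint} the second conjunct holds. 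Conversely, if both conjuncts hold, then $f$ names some Banach space presentation $\B^\#$ and $\Phidis(f,h)$ names an $L^p$-formal disintegration of $\B^\#$; by Theorem~\ref{thm:disint.Lp}, $\B^\#$ is then isometrically isomorphic to a separable $L^p$ space and is therefore an $L^p$ space presentation. The right-hand side is a conjunction of two $\Pi_2^0$ conditions (Theorem~\ref{thm:Banach.complete} together with Theorem~\ref{thm:disint}, and since $\Phidis$ is a computable operator the substitution preserves the complexity), hence $\Pi_2^0$.

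For the $\Pi_2^0$-hardness, I would apply Lemma~\ref{lm:Banach.complete} with $\B^\#$ taken to be any fixed computable presentation of an $L^p$ space, for instance the standard presentation of $\ell^p$. Given any $\Pi_2^0$ predicate $P\subseteq \N^\N$, the lemma produces a computable operator $F:\N^\N\to\N^\N$ so that $F(f)$ names $\B^\#$ when $P(f)$ holds and names no Banach space presentation otherwise. Consequently, $F(f)$ names an $L^p$ space presentation if and only if $P(f)$, which is the desired continuous reduction from an arbitrary $\Pi_2^0$ set.

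No step poses a serious obstacle: the substantive work is already packed into Theorem~\ref{thm:disint}, the uniform form of Theorem~\ref{thm:disint.comp}, and the characterization in Theorem~\ref{thm:disint.Lp}. The only design choice is the split at $p=2$, which is forced because $\Phidis$ is only defined for $p\neq 2$; the parallelogram law handles the Hilbert case cleanly via Lemma~\ref{lm:hilbert}.
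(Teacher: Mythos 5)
Your proposal is correct, and the upper bound is essentially the paper's own argument: the predicate $\Banach(f)\wedge\Disint(f,\Phidis(f,h),h)$ for $p\neq 2$, together with the parallelogram-law characterization for $p=2$, is exactly what the paper packages into the predicate $\Lspace$ (Lemma \ref{lm:Lspace}), with the same appeals to Theorems \ref{thm:disint}, \ref{thm:disint.comp}, and \ref{thm:disint.Lp}.

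Where you genuinely diverge is the lower bound for $p\neq 2$. You invoke Lemma \ref{lm:Banach.complete} with $\B^\#$ the standard presentation of $\ell^p$, so that on negative instances $F(f)$ fails to name any Banach space presentation at all. This is a legitimate reduction for the statement as written, and indeed the paper itself uses precisely this device for the $p=2$ case and for several other $\Pi_2^0$ lower bounds. The paper, however, does something more laborious for $p\neq 2$: from a $\Pi_2^0$ predicate $\forall x\exists y\,Q(f;x,y)$ it builds a closed subspace $\B_f$ of $\ell^p$ spanned by vectors such as $e_{3x}+e_{3x+1}$ and $e_{3x+1}+e_{3x+2}$, arranged so that $\B_f=\ell^p$ when $P(f)$ holds, while if $P(f)$ fails $\B_f$ is still a genuine (infinite-dimensional) Banach space but is provably not an $L^p$ space --- the verification uses Lamperti's theorem on preservation of disjoint supports to show that any putative disjointly supported Schauder basis would have to contain two non-disjointly-supported vectors. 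What the paper's construction buys is a stronger fact: deciding $L^p$-ness is $\Pi_2^0$-hard even when the input is promised to name a Banach space presentation, so the hardness is intrinsic to recognizing $L^p$ structure rather than inherited from the $\Pi_2^0$-hardness of being a Banach space name in the first place (Theorem \ref{thm:Banach.complete}). Your reduction conflates these two sources of difficulty; it suffices for the theorem as stated, but it would not distinguish the $L^p$ recognition problem from the Banach space recognition problem.
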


\begin{theorem}\label{thm:L2}
The set of all names of presentations of $\ell^2$ is $\Pi_2^0$-complete as is the set of all names of 
presentations of $\ell^2_n$ for each $n \geq 1$.  
\end{theorem}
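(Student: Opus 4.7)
The plan is to prove hardness and the upper bound separately. For the lower bound: both $\ell^2$ and $\ell^2_n$ carry computable standard presentations, so Lemma \ref{lm:Banach.complete} supplies, for every $\Pi_2^0$ set $P \subseteq \N^\N$, a computable operator $F$ such that $F(f)$ names the designated space exactly when $P(f)$ holds. This gives a many-one reduction from $P$ to the name set, hence $\Pi_2^0$-hardness in both cases.

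For the upper bound I will express each name set as a conjunction of $\Pi_2^0$ predicates. The first factor is ``$f$ names a Banach space presentation,'' which is $\Pi_2^0$ by Theorem \ref{thm:Banach.complete}. The second factor is Hilbert-ness, which by Jordan--von Neumann amounts to the parallelogram identity $\norm{x+y}^2+\norm{x-y}^2 = 2\norm{x}^2 + 2\norm{y}^2$ holding on all rational vectors (continuity transports it to the whole space). Given $f$, every rational-vector norm is $\Sigma_1^0$-approximable from the diagram via Proposition \ref{prop:interpretation}, so asserting equality of the two sides for a fixed pair is $\Pi_2^0$: for every $k$ there eventually appear approximations witnessing agreement to within $2^{-k}$. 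Universal quantification over the countably many rational-vector codes does not raise this complexity.

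The third factor fixes the dimension, which I handle through Gram determinants rather than raw linear independence. Polarization expresses each inner product $\langle v_i, v_j\rangle$ as a rational combination of norms of rational vectors, so the Gram matrix of any tuple of distinguished vectors has approximable entries and its determinant $\det G$ is itself an approximable real in $f$. Because $G$ is positive semidefinite, $\det G \geq 0$ unconditionally, which makes ``$\det G > 0$'' a $\Sigma_1^0$ predicate and ``$\det G = 0$'' a $\Pi_1^0$ predicate of $f$. Once Hilbert-ness is granted, $\det G > 0$ is equivalent to linear independence of the tuple, and since a linearly dense subset of a $d$-dimensional Banach space must contain a basis, the dimension of $\B$ equals the largest size of a tuple of distinguished vectors with $\det G > 0$. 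Consequently ``$\dim \B \leq n$'' reads as ``every $(n{+}1)$-tuple of distinguished vectors has $\det G = 0$'' and is $\Pi_1^0$, while ``$\dim \B \geq n$'' reads as ``some $n$-tuple has $\det G > 0$'' and is $\Sigma_1^0$. For $\ell^2_n$, conjoining with the parallelogram condition and the Banach-presentation condition yields a $\Pi_2^0$ characterization of its names. For $\ell^2$, infinite-dimensionality is $\forall n\,\Sigma_1^0 = \Pi_2^0$, and the conjunction with parallelogram is again $\Pi_2^0$.

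The one place to be careful is this last step: routing the dimension clause through Gram determinants rather than through the direct linear-independence definition is what keeps it at the $\Sigma_1^0 \wedge \Pi_1^0$ level. The Hilbert-space hypothesis is essential here, since in a general Banach space linear independence of a finite tuple is genuinely $\Pi_2^0$, and a naive reading of ``some $n$-tuple is linearly independent'' would inflate the overall complexity to $\Sigma_3^0$. Beyond this bit of bookkeeping I do not anticipate any serious obstacle.
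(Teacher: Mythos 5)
Your proof is correct and follows essentially the same route as the paper's: the lower bound via Lemma \ref{lm:Banach.complete}, and the upper bound as the conjunction of the Banach, Hilbert (parallelogram law), and dimension conditions, with polarization used to recover the inner product from the norm. The only difference is that the paper obtains the $\Sigma_1^0$ ``dimension at least $n$'' predicate by running Gram--Schmidt to enumerate an orthonormal basis, whereas you test linear independence of tuples of distinguished vectors directly via positivity of Gram determinants; both are sound and yield the same complexity.
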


\begin{theorem}\label{thm:known.exp} Let $p$ be a computable real so that $p \geq 1$ and $p \neq 2$.  
\begin{enumerate}
	\item For each $n \geq 1$, the set of all names of presentations of $\ell^p_n$ is 
	$\Pi_2^0$-complete.\label{thm:known.exp.lpn}
	
	\item The set of all names of presentations of $\ell^p$ is $\Pi_3^0$-complete.\label{thm:known.exp.lp}
	
	\item The set of all names of presentations of $L^p[0,1]$ is $\Pi_2^0$-complete.\label{thm:known.exp.Lp01}
	
	\item For each $n \geq 1$, the set of all names of presentations of $\ell^p_n \oplus_p L^p[0,1]$ is 
	$d$-$\Sigma_2^0$-complete.\label{thm:known.exp.lpnLp01}
	
	\item The set of all names of presentations of $\ell^p \oplus_p L^p[0,1]$ is 
	$d$-$\Sigma_3^0$-complete.\label{thm:known.exp.lpLp01}
\end{enumerate}
\end{theorem}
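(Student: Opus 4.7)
My plan is to combine a uniform upper-bound recipe driven by computable disintegrations with five case-specific hardness reductions. Given a name $f$ of a Banach space presentation and the canonical name $h_p$ of the computable real $p$, I apply the operators from Theorems \ref{thm:disint.comp} and \ref{thm:decomp} to form $g = \Phidis(f, h_p)$ and $h = \Phidecomp(f, g, h_p)$. Theorem \ref{thm:disint} supplies a $\Pi_2^0$ certificate that $g$ actually names a formal $L^p$-disintegration $\phi$ of $\B$; whenever that certificate holds, the chain decomposition $\{C_n\}$ recovered from $h$ faithfully records the atomic structure of $\B$ by Theorem \ref{thm:limits.chains}, with positive-infimum chains in bijection with the atoms of the underlying measure space and zero-infimum chains witnessing the non-atomic part. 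Comparing this structural data against the classification in Theorem \ref{thm:classification} gives a uniform way to single out each of the five isomorphism types from a short list of arithmetical invariants of $\dom(\phi)$ and $\{C_n\}$.

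The quantifier bookkeeping for the upper bounds then runs as follows. ``$C_n$ has positive infimum'' is $\Sigma_2^0$ and ``$C_n$ has zero infimum'' is $\Pi_2^0$ in $h$; dimension exactly $n$ of $\B$ is $d$-$\Sigma_1^0$ via Lemma \ref{lm:antichains}; infinite dimensionality of $\B$ is $\Pi_2^0$ (a witness antichain of every finite size); and non-atomicity of $\B$ admits the local $\Pi_2^0$ characterization that every rational vector has arbitrarily good approximate equi-split formal subvectors. Assembling these gives $\Pi_2^0$ for $\ell^p_n$ (the $L^p$ condition plus an antichain dimension bound), $\Pi_2^0$ for $L^p[0,1]$ (the $L^p$ condition plus the local non-atomicity), $\Pi_3^0$ for $\ell^p$ (infinitely many positive-infimum chains and no zero-infimum chain), $d$-$\Sigma_2^0$ for $\ell^p_n \oplus_p L^p[0,1]$ as the $\Sigma_2^0$ fact ``at least $n$ positive-infimum chains'' intersected with the $\Pi_2^0$ fact ``at most $n$ positive-infimum chains, is $L^p$, and is infinite-dimensional'', and $d$-$\Sigma_3^0$ for $\ell^p \oplus_p L^p[0,1]$ as ``infinitely many positive-infimum chains'' ($\Pi_3^0$) intersected with ``some chain has zero infimum'' ($\Sigma_3^0$).

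For the matching hardness, Lemma \ref{lm:Banach.complete} applied to the computable standard presentations of $\ell^p_n$ and $L^p[0,1]$ immediately yields the $\Pi_2^0$ lower bounds for parts (1) and (3). For parts (2), (4), and (5) I use the linear order functor $\mathcal{L} \mapsto \Omega_\mathcal{L}$ built around Definition \ref{def:faithful} and Corollary \ref{cor:faithful}: given a computable linear order $\mathcal{L}$, it outputs a computable measure space in which each element of $\mathcal{L}$ satisfying a chosen local order property contributes an atom of controlled mass, while a non-atomic background of controlled mass may or may not be present. By tuning the trigger property and the presence of the non-atomic background, I arrange the resulting $L^p(\Omega_\mathcal{L})$ to land in exactly one of the five isomorphism classes as a function of a combinatorial property of $\mathcal{L}$, and then reduce an appropriately chosen $\Pi_3^0$-complete linear-order property (``infinitely many elements satisfy the trigger'') to the $\ell^p$ case, a corresponding $d$-$\Sigma_2^0$-complete property to the $\ell^p_n \oplus_p L^p[0,1]$ case, and a $d$-$\Sigma_3^0$-complete property to the $\ell^p \oplus_p L^p[0,1]$ case.

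The principal obstacle will be the construction and faithfulness proof of the linear order functor: it must be computable, must not introduce spurious atoms or spurious non-atomic mass that would push the image into the wrong isomorphism class, and must be flexible enough to realize all three non-$\Pi_2^0$ classes with only local modifications of the trigger property and the background. A secondary difficulty is the tight $\Pi_2^0$ upper bound for $L^p[0,1]$, where a direct chain-decomposition argument naively yields only $\Pi_3^0$; this is resolved by replacing it with the local equi-split characterization of non-atomicity, which simultaneously supplies the clean $\Sigma_2^0$/$\Pi_2^0$ split needed for the $d$-$\Sigma_2^0$ case.
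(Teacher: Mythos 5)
Your overall architecture --- compute $\Phidis$ and $\Phidecomp$, certify the disintegration with Theorem \ref{thm:disint}, read off invariants of the chain decomposition against Theorem \ref{thm:classification}, and get hardness from Lemma \ref{lm:Banach.complete} plus the linear-order functor --- is the paper's. Parts (1), (3), and (4) are essentially right as you describe them (and, incidentally, your worry about part (3) is unfounded: ``every chain contains vectors of arbitrarily small norm'' is already $\Pi_2^0$, and it correctly characterizes non-atomicity because by Theorem \ref{thm:limits.chains} every atom is the support of the $\preceq$-infimum of exactly one chain; the paper uses exactly this, so your equi-split detour is unnecessary).

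The genuine gap is in the upper bounds for parts (2) and (5), where you use ``no chain has zero infimum'' as a proxy for \emph{purely atomic} and ``some chain has zero infimum'' as a proxy for \emph{has non-atomic mass}. Neither implication holds. Theorem \ref{thm:limits.chains} says each infimum $g_n$ is $\zerovec$ or an atom and that the atoms are in bijection with the chains having nonzero infimum, but it does not tie the \emph{existence} of a zero-infimum chain to the non-atomic part. In a presentation of $\ell^p$ an almost norm-maximizing chain can diagonalize --- the tolerance $2^{-|\nu|}$ in Definition \ref{def:anm.chain} permits choosing a child of very small norm at every level --- so a purely atomic space can have a zero-infimum chain, and your condition for (2) would reject it. Conversely, a decomposition of $\dom(\psi)$ for $\ell^p \oplus_p L^p[0,1]$ into $\omega$ many chains can assign one atom to each chain, so that every chain has nonzero infimum while the non-atomic mass is simply shed along the way; your condition for (2) would then accept a space that is not $\ell^p$, and your condition for (5) would reject one that is $\ell^p\oplus_p L^p[0,1]$. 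Nothing about the particular operators $\Phidis,\Phidecomp$ rules this out. The correct detector of non-atomic mass is the one the paper isolates in Lemma \ref{lm:Lp01.embed}: $L^p[0,1]$ embeds iff $\psi(\emptyset)\neq\sum_n g_n$, equivalently $\exists k\,\forall m\ \sum_{j\le m}\norm{g_j}_p^p<\norm{\psi(\emptyset)}_p^p-2^{-k}$, which is $\Sigma_3^0$ and, combined with ``infinitely many chains bounded away from $\zerovec$'' ($\Pi_3^0$), yields the stated $\Pi_3^0$ and $d$-$\Sigma_3^0$ bounds. Your hardness sketch for (2), (4), (5) points at the right functor but omits the paper's key device for realizing the difference classes: writing $P=Q\wedge\neg R$ and taking the $L^p$-sum of the two spaces produced from $Q$ and $R$.
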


The upper bounds in these theorems are mostly obtained by carefully using the technology developed in previous sections.  When the upper bound is $\Pi^0_2$, the lower bound follows from Lemma \ref{lm:Banach.complete}.  In other cases, lower bounds are obtained by relating the complexity of $L^p$-space index sets to that of certain classes of countable linear orders.  To this end, we demonstrate in Proposition \ref{prop:transfer} the existence of a computable operator that given a diagram $f$ of a linear order $L$ produces a name for an $L^p$-space $L^p(\Omega)$ such that the number of adjacencies in $L$ is the number of atoms in $\Omega$, and so that $L^p[0,1]$ isometrically embeds into $L^p(\Omega)$ if $L$ contains a copy of $\Q$ (c.f. Proposition \ref{prop:transfer}).  

We begin by proving two lemmas for the sake of proving the upper bound in Theorem \ref{thm:Lp}.

\begin{lemma}\label{lm:hilbert}
There is a $\Pi_1^0$ predicate $\Hilbert$ so that whenever $f$ names a Banach space presentation $\B^\#$, 
$\B$ is a Hilbert space if and only if $\Hilbert(f)$.
\end{lemma}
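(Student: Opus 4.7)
The plan is to use the Jordan--von Neumann characterization: a Banach space $\B$ is a Hilbert space if and only if its norm satisfies the parallelogram law $\|u+v\|^2+\|u-v\|^2=2(\|u\|^2+\|v\|^2)$ for all $u,v\in\B$. Since the norm is continuous and the rational vectors of any presentation $\B^\#$ are dense in $\B$, verifying the parallelogram law on all pairs of rational vectors is equivalent to verifying it on all of $\B$. So it suffices to produce a $\Pi_1^0$ predicate which, when applied to a name $f$ of $\B^\#$, enforces the parallelogram law on all pairs of rational vectors of $\B^\#$.

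I will encode the negation (i.e., a violation of the parallelogram law) as a $\Sigma_1^0$ condition using the language $\Lpres$ and Proposition \ref{prop:interpretation}. A violation of the parallelogram law at a pair of rational vectors $u,v$ splits into two cases: either $\|u+v\|^2+\|u-v\|^2 > 2(\|u\|^2+\|v\|^2)$, or the reverse strict inequality holds. The first case is witnessed whenever there exist terms $\sigma,\tau$ of $\Lpres$ (each of the form $\sum_j \alpha_j \overline{v_j}$ with $\alpha_j$ rational) and nonnegative rationals $r_0,r_1,r_2,r_3$ such that
\[
f\models \|\sigma+\tau\|>r_0,\ f\models\|\sigma-\tau\|>r_1,\ f\models\|\sigma\|<r_2,\ f\models\|\tau\|<r_3,
\]
together with the decidable arithmetical inequality $r_0^2+r_1^2\geq 2(r_2^2+r_3^2)$. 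The second case is witnessed dually by swapping the directions of the four norm inequalities and using $r_0^2+r_1^2\leq 2(r_2^2+r_3^2)$. By Proposition \ref{prop:interpretation}, when $f$ names $\B^\#$ these metalinguistic assertions correspond exactly to the actual strict norm inequalities in $\B$; and by density plus continuity of the norm, the existence of such a witness is equivalent to a genuine failure of the parallelogram law on $\B$. Each atomic condition of the form $f\models\|\sigma\|<r$ or $f\models\|\sigma\|>r$ is $\Sigma_1^0$ in $f$ directly from the definition of the diagram, so the existential quantifiers over terms and rationals keep the whole violation condition $\Sigma_1^0$.

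Let $\Hilbert(f)$ be the negation of this violation condition; then $\Hilbert$ is a $\Pi_1^0$ predicate. If $f$ names a Banach space presentation $\B^\#$, then $\Hilbert(f)$ holds iff no rational-vector violation is enumerated by $f$, iff the parallelogram law holds on all rational vectors, iff (by density and continuity) it holds on $\B$, iff $\B$ is a Hilbert space by Jordan--von Neumann. There is no real obstacle; the only point requiring care is arranging rational witnesses that make both directions of the parallelogram equality fail via strict open inequalities so that each clause is genuinely a $\Sigma_1^0$ fact about the diagram of $\B^\#$.
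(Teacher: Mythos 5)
Your proof is correct and follows essentially the same route as the paper's: both express a violation of the parallelogram law at rational vectors as a $\Sigma_1^0$ condition on the diagram (via strict rational bounds on the four norms and a comparison of squares) and define $\operatorname{Hilbert}$ as its negation, invoking density of rational vectors, continuity of the norm, and the Jordan--von Neumann characterization. The only cosmetic difference is your use of a weak inequality $r_0^2+r_1^2\geq 2(r_2^2+r_3^2)$ where the paper uses a strict one; both work.
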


\begin{proof} 
Suppose $f$ names a Banach space presentation $\B^\#$.  Then, $\Hilbert(f)$ if and only if 
the $\B$ satisfies the parallelogram law, which is clearly an (effectively) closed condition.  
More formally,
let $\Hilbert(f)$ hold if there do not exist $r_0,r_1, r_2, r_3 \in \Q$ so that one of the following 
holds.
\begin{enumerate}
	\item $f \models r_0 > \norm{\tau_0 + \tau_1}$,\ \  $f \models r_1>  \norm{\tau_0 - \tau_1}$,\ \  
	$f \models r_2 < \norm{\tau_0}$,\ \ $f \models r_3 < \norm{\tau_1}$, and 
	$2(r_2^2 + r_3^2) > r_0^2 + r_1^2$.
	
	\item $f \models r_0 < \norm{\tau_0 + \tau_1}$,\ \  $f \models r_1 < \norm{\tau_0 - \tau_1}$,\ \  
	$f \models r_2 > \norm{\tau_0}$,\ \  $f \models r_3 > \norm{\tau_1}$, and 
	$2(r_2^2 + r_3^2) < r_0^2 + r_1^2$.
\end{enumerate} \end{proof}

\begin{lemma}\label{lm:Lspace}
There is a $\Pi_2^0$ predicate $\Lspace \subseteq \N^\N \times \N^\N$ so that 
for all $f,g \in \N^\N$, if $g$ names a real $p \geq 1$, then $\Lspace(f,g)$ if and only if $f$ names an $L^p$-space presentation.  
\end{lemma}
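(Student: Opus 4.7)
The plan is to combine the $\Pi_2^0$ predicates provided by Theorem \ref{thm:Banach.complete}, Lemma \ref{lm:hilbert}, and Theorem \ref{thm:disint} with the uniformly computable disintegration operator $\Phidis$ from Section \ref{sec:prelim::subsec:CA}. The guiding principle is Theorem \ref{thm:disint.Lp}: a Banach space is isometrically isomorphic to a separable $L^p$ space if and only if it admits an $L^p$-formal disintegration. Quantifying over possible disintegrations naively gives only a $\Sigma^1_1$ bound, but since $\Phidis(f,g)$ produces a canonical computable candidate, we can replace ``some disintegration exists'' by ``this particular one is a valid $L^p$-formal disintegration'', which by Theorem \ref{thm:disint} is $\Pi_2^0$ uniformly in $(f,g)$.

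The main obstacle is that the defining property of $\Phidis$ only guarantees success when $p \neq 2$, since its construction rests on Lamperti's theorem. We handle the case $p=2$ separately via the Hilbert characterization: every separable $L^2$ space is a Hilbert space and conversely, and by Lemma \ref{lm:hilbert} the Hilbert property is $\Pi_1^0$.

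Accordingly, we define $\Lspace(f,g)$ to hold if and only if $f$ names a Banach space presentation and either (a) $g$ names the real $2$ and $\Hilbert(f)$ holds, or (b) $\Disint(f,\Phidis(f,g),g)$ holds. In any reasonable naming convention for reals, the condition ``$g$ names $2$'' is at worst $\Pi_2^0$. Combined with the $\Pi_2^0$ bounds on the Banach, Hilbert, and $\Disint$ predicates and the computability of $\Phidis$, each disjunct is $\Pi_2^0$, hence so is $\Lspace$.

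Correctness then follows by a short case analysis. When $g$ names $p=2$, disjunct (a) captures precisely the $L^2$ case, and disjunct (b) cannot fire spuriously because $\Phidis(f,g)$ naming an $L^2$-formal disintegration of $\B^\#$ would already force $\B$ to be $L^2$ by Theorem \ref{thm:disint.Lp}. When $g$ names $p \neq 2$, disjunct (a) is automatically false, and disjunct (b) fires exactly when $\B$ is an $L^p$ space: the forward direction uses the defining property of $\Phidis$ together with the agreement of formal and genuine disintegrations in $L^p$ spaces when $p \neq 2$, and the reverse direction is again Theorem \ref{thm:disint.Lp}.
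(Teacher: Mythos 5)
Your proposal is correct and takes essentially the same route as the paper: both define $\Lspace(f,g)$ as $\Banach(f)$ together with the disjunction of $(\Hilbert(f)\wedge\text{``$g$ names $2$''})$ and $\Disint(f,\Phidis(f,g),g)$, obtain the $\Pi_2^0$ bound from Theorem \ref{thm:Banach.complete}, Lemma \ref{lm:hilbert}, and Theorem \ref{thm:disint}, and derive correctness from Theorem \ref{thm:disint.Lp}. Your explicit observation that disjunct (b) cannot fire spuriously when $p=2$ is a detail the paper leaves implicit, but the argument is the same.
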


\begin{proof} 
Recall the predicate $\Disint$ from Theorem~\ref{thm:disint}, and recall that 
 $\Phidis : (\N^\N)^2 \rightarrow \N^\N$ is a computable operator so that 
for all $f,g \in \N^\N$, if $g$ names a real $p \geq 1$ so that $p \neq 2$, and if 
$f$ names an $L^p$ space presentation $\B^\#$, then $\Phidis(f,g)$ names 
a disintegration of $\B^\#$.

Let $\Lspace(f,g)$ hold if and only if $\Banach(f)$ and 
\[
(\Hilbert(f)\ \wedge\ \mbox{$g$ names $2$})\ \vee\ 
\Disint(f, \Phidis(f,g), g).
\]
The set of all names of $2$ is $\Pi_1^0$.  So, it follows from Theorem \ref{thm:Banach.complete}, Lemma \ref{lm:hilbert}, and Theorem \ref{thm:disint} that $\Lspace$ is $\Pi_2^0$.  
Assume $g$ names a real $p \geq 1$.  
Then, $\Lspace(f,g)$ says that either $f$ names a Hilbert space and $p=2$ or it names a Banach space presentation with an $L^p$-formal disintegration.  By Theorem \ref{thm:disint.Lp}, the latter holds if and only if $f$ names an $L^p$-space presentation.
\end{proof}

\begin{proof}[Proof of Theorem \ref{thm:Lp}]  
In light of Lemma \ref{lm:hilbert} and \ref{lm:Lspace}, it suffices to show $\Pi_2^0$-hardness.
The case where $p = 2$, follows from Theorem~\ref{thm:Banach.complete}. 
 So, assume $p \neq 2$.  Let $Q \subseteq \N^\N \times \N^2$ be a computable predicate so that for all 
$f \in \N^\N$, 
\[
P(f) \Leftrightarrow \forall x \exists y Q(f; x,y).
\]
For each $f \in \N^\N$ we define a Banach space presentation $\B_f^\#$ as follows.  
When $f \in \N^\N$ and $x,y \in \N$, let:
\begin{eqnarray*}
R_f(\langle x, 0 \rangle) & = & e_{3x} + e_{3x + 1} \\
R_f(\langle x,1 \rangle) & = & e_{3x+1} + e_{3x+2}\\
R_f(\langle x,y +2 \rangle) & = & 
\left\{\begin{array}{cc}
e_{3x+1} & \exists y' < y\ Q(f; x,y')\\
e_{3x+1} + e_{3x+2} & \mbox{otherwise} \\
\end{array}
\right.
\end{eqnarray*}
Let $\B_f$ denote the closed linear span of $R_f$ in $\ell^p$, and let 
$\B_f^\# = (\B_f, R_f)$.  

We now define $F$.  Since $Q,p$ are computable, $R_f$ is an $f$-computable sequence of 
$\ell^p$ uniformly in $f$.  Thus, $\B_f^\#$ is an $f$-computable presentation uniformly in $f$.   
Hence, 
there is a computable $F : \N^\N \rightarrow \N^\N$ so that $F(f)$ names 
$\B_f^\#$ for each $f \in \N^\N$.  

We now show that for all $f \in \N^\N$, $P(f)$ if and only if $F(f)$ enumerates the diagram of an $L^p$-space presentation.  If $P(f)$, then the definition of $R_f$ ensures $\B_f = \ell^p$.  
Suppose $P(f)$ fails, and by way of contradiction suppose $\B_f$ is an $L^p$ space.

We claim $\B_f$ is isometrically isomorphic to $\ell^p$.  As the vectors $R_f(\langle 0,0 \rangle)$, $R_f(\langle 1,0 \rangle)$, $R_f(\langle 2,0 \rangle )$ $\ldots$ are linearly independent, $\B_f$ is infinite-dimensional.   By the classification of separable $L^p$ spaces, $\B_f$ is isometrically isomorphic to one of $\ell^p$, $L^p[0,1]$, $\ell^p \oplus_p L^p[0,1]$, or $\ell^p_n \oplus_p L^p[0,1]$ for some $n \geq 1$.  
Since $\B_f$ is a subspace of $\ell^p$, it must be that $\B_f$ is isometrically isomorphic to $\ell^p$.

Let $T$ be an isometric isomorphism of $\B_f$ onto $\ell^p$, and let $v_j = T^{-1}(e_j)$.  
Thus, $\{v_j\}_{j \in \N}$ is a Schauder basis for $\B_f$.  Furthermore, by Theorem \ref{thm:disj.supp}, the vectors $v_0$, $v_1$, $\ldots$, are disjointly supported.  

Since $P(f)$ fails, there is an $x_0 \in \N$ so that $Q(f; x_0, y)$ fails for all $y \in \N$.  We claim there is a $j_0 \in \N$ so that $e_{3x_0} + e_{3x_0 + 1}$ is a scalar multiple of $v_{j_0}$.  
For there is a sequence $\{\beta_j\}_{j \in \N}$ of scalars so that $e_{3x_0} + e_{3x_0 + 1} = \sum_j \beta_j v_j$.  Thus, $\beta_j v_j$ is a component of $e_{3x_0} + e_{3x_0 + 1}$ for each $j$.  
There is a $j_0 \in \N$ so that $\beta_{j_0} \neq \zerovec$.  Hence, $v_{j_0}$ is either $e_{3x_0}$, 
$e_{3x_0 + 1}$, or $e_{3x_0} + e_{3x_0 + 1}$.  Since $Q(f;x_0, y)$ fails for all $y \in \N$, 
$e_{3x_0 + 1} \not \in \B_f$.  Hence, $e_{3x_0} \not \in \B_f$ either.  Thus, 
$\beta_{j_0} v_{j_0} = e_{3x_0} + e_{3x_0 + 1}$.  

It similarly follows that there is a $j_1 \in \N$ so that $v_{j_1}$ is a scalar multiple of 
$e_{3x_0 + 1} + e_{3x_0 + 2}$.  Hence $j_0 \neq j_1$, and $v_{j_0}$, $v_{j_1}$ are not disjointly supported.  This is a contradiction, and so $\B_f$ is not an $L^p$ space.
\end{proof}

\begin{proof}[Proof of Theorem \ref{thm:L2}]
Recall that if $\F = \R$, then the inner product of an inner product space and its norm are related by the polarization identity
\[
\langle u,v \rangle = \frac{1}{4} (\norm{u + v}^2 - \norm{u - v}^2).
\]
If $\F = \C$, then the corresponding identity is 
\[
\langle u,v \rangle = \frac{1}{4} (\norm{u + v}^2 - \norm{u - v}^2) + \frac{i}{4}(\norm{u - iv}^2 - \norm{u + iv}^2).
\]
In either case, the point is that the inner product can be expressed in terms of the norm.  
Recall also that if $v_0, v_1, \ldots, v_n$ are pairwise orthoganol unit vectors, and if 
$v = \alpha_0 v_0 + \ldots + \alpha_n v_n$, then $\alpha_j = \langle v, v_j \rangle$ for each $j$.  
It follows from these observations and Gramm-Schmidt orthonormalization that there is a computable
operator $F : \subseteq \N^\N \rightarrow \N$ so that for all $f \in \N^\N$, if $f$ names a nonzero Hilbert space presentation $\B^\#$, then $F(f)$ enumerates an orthonormal basis for $\B^\#$.  If $\B$ is finite-dimensional, then this enumeration will contain repetitions.  
It then follows that there is a $\Sigma_1^0$ predicate $P \subseteq \N^\N \times \N$ so that for all 
$f \in \N^\N$ and $n \in \N$, if $f$ names a Hilbert space presentation $\B^\#$, then 
$P(f;n)$ if and only if the dimension of $\B$ is at least $n$.  Thus, $f \in \N^\N$ names 
$\ell^2$ if and only if 
\[
\Banach(f)\ \wedge\ \Hilbert(f)\ \wedge\ \forall n\ P(f;n).
\]
Also, $f \in \N^\N$ names $\ell^2_n$ if and only if 
\[
\Banach(f)\ \wedge\ \Hilbert(f)\ \wedge\ P(f;n)\ \wedge\ \neg P(f;n+1).
\]
Both of these conditions are $\Pi_2^0$.  

Again, the lower bounds are by Lemma \ref{lm:Banach.complete}.   
\end{proof}

\begin{proof}[Proof of Theorem \ref{thm:known.exp}.\ref{thm:known.exp.lpn}]
Fix a computable name $h$ of $p$. 

We first show that there is a $d-\Sigma_1^0$ predicate $G \subseteq \N^\N$ so that whenever $f \in \N^\N$ names an $L^p$ space presentation $\B^\#$, $G(f)$ if and only if $\B$ is isometrically isomorphic to $\ell^p_n$.  To this end, we first note that there is a $\Sigma_1^0$-predicate 
$P \subseteq \N^\N \times \N^\N \times \N$ so that for all $f,g \in \N^\N$ and $m \in \N$, if 
$f$ names an $L^p$ space presentation $\B^\#$, and if $g$ names a disintegration $\psi$ of $\B^\#$, then $P(f,g;m)$ if and only if $\dom(\psi)$ contains an antichain of size $m$.  Let 
\[
G(f) \Leftrightarrow P(f, \Phidisint(f,h); n)\ \wedge\ \forall m > n \neg P(f, \Phidisint(f,h); m).
\] 
Thus, $G$ is $d$-$\Sigma_1^0$.  It follows from Lemma \ref{lm:antichains} that $G$ has the required properties.

We now infer from Lemma \ref{lm:Lspace} that the set of all names of presentations of $\ell^p_n$ is $\Pi_2^0$.  Once again, the lower bound follows from Lemma \ref{lm:Banach.complete}.
\end{proof}

The lower bounds in the remaining parts of Theorem \ref{thm:known.exp} require the linear order technology alluded to in the introduction of this section and which we now lay out precisely.

\begin{definition}\label{def:interval.algebras}
Suppose $L$ is a set of reals that contains at least two points.  
\begin{enumerate}
	\item Let $I_L$ denote the set of all open intervals whose endpoints belong to $L$.
	
	\item Let $X_L = \bigcup I_L$.
	
	\item Let $\mathcal{S}_L$ denote the $\sigma$-algebra over $X_L$ generated by $I_L$.  
	
	\item Let $\Omega_L = (X_L, \mathcal{S}_L, m_L)$ where $m_L$ is the restriction of Lebesgue measure to $\mathcal{S}_L$.
\end{enumerate}
\end{definition}

\begin{lemma}\label{lm:omega.L}
Suppose $L$ is a set of reals that contains at least two points.  
\begin{enumerate}
	\item Every minimal element of $I_{\overline{L}}$ is an atom of $\Omega_L$, and every atom of $\Omega_L$ is, up to a set of measure 0, a minimal element of $I_{\overline{L}}$.  \label{lm:omega.L::atom}
	
	\item If there is an open interval $I$ so that $I \cap \Q \subseteq L$, then $\Omega_L$ is not purely atomic.\label{lm:omega.L::non.atom}
\end{enumerate}
\end{lemma}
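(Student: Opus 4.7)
My plan for (1) hinges on a structural observation about the generators: every $(c, d) \in I_L$ has both endpoints in $L \subseteq \overline{L}$, so neither can lie strictly inside a minimal gap $(a, b)$; a short case analysis then forces $(c, d) \supseteq (a, b)$ or $(c, d) \cap (a, b) = \emptyset$. The family $\mathcal{F} = \{A \in \mathcal{S}_L : A \supseteq (a, b) \text{ or } A \cap (a, b) = \emptyset\}$ is clearly closed under complement and countable union, so $\mathcal{F} = \mathcal{S}_L$. To exhibit the atom I form $\tilde A = \bigcap_n (c_n, d_n) \in \mathcal{S}_L$, where $c_n, d_n \in L$ with $c_n = a$ if $a \in L$ and $c_n \nearrow a$ otherwise (using that $a \in \overline{L}$ must be approached from below, since $L \cap (a, b) = \emptyset$), and symmetrically for $d_n$. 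Then $\tilde A$ differs from $(a, b)$ only by the null set $\{a, b\} \cap \tilde A$, so $m_L(\tilde A) = b - a > 0$, and the dichotomy forces every measurable subset of $\tilde A$ to have measure $0$ or $b - a$.

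For the converse of (1), let $A$ be an atom. For each $l \in L$ the set $A \cap (-\infty, l)$ is measurable, since $(-\infty, l) \cap X_L$ is a countable union of generators $(c, l)$; so atomicity forces $m_L(A \cap (-\infty, l)) \in \{0, m_L(A)\}$. This partitions $L$ into a downward-closed $L_{\text{left}}$ (measure $0$) and upward-closed $L_{\text{right}}$ (full measure); both are nonempty, since any $l \in L$ within distance less than $m_L(A)$ of $\inf L$ forces $m_L(A \cap (-\infty, l)) < m_L(A)$, hence $= 0$, and symmetrically for $L_{\text{right}}$. Setting $a = \sup L_{\text{left}}$ and $b = \inf L_{\text{right}}$ gives $a \leq b$ with $a, b \in \overline{L}$, and any $l \in L \cap (a, b)$ would have to lie in one of the two pieces, contradicting the definitions. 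So $L \cap (a, b) = \emptyset$, and since any point of $(a, b) \cap \overline{L}$ would be approached by $L$ points eventually forced into $(a, b)$, also $\overline{L} \cap (a, b) = \emptyset$. The case $a = b$ would confine $A$ to an essentially single point and hence force $m_L(A) = 0$; so $a < b$, $(a, b)$ is a minimal element of $I_{\overline{L}}$, and since $A \subseteq [a, b]$ modulo null sets the forward direction pins down $A = (a, b)$ almost surely.

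For (2), I choose rationals $u' < v'$ inside $I$. Both lie in $L$, so $(u', v') \in I_L$ is measurable with positive measure $v' - u'$. Were $\Omega_L$ purely atomic, part (1) would supply a minimal gap $(a, b) \subseteq [u', v'] \subseteq \overline{I}$ with $(a, b) \cap \overline{L} = \emptyset$; but $\overline{L} \supseteq \overline{I \cap \Q}$ is dense in $I$, so no such gap can exist. Hence $(u', v')$ contains no atom, and $\Omega_L$ is not purely atomic.

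The only point requiring real care is that $(a, b)$ itself may fail to belong to $\mathcal{S}_L$ when neither endpoint of a minimal gap lies in $L$; the construction of $\tilde A$ sidesteps this by producing a measurable set agreeing with $(a, b)$ off a null set, which is exactly what the ``up to a set of measure $0$'' clause in the lemma accommodates.
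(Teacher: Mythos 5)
Your proof is correct, and the forward half of (1) together with part (2) follows essentially the paper's route: the paper likewise observes that every generator either contains or misses a minimal gap $(a,b)$, pushes this dichotomy through the generated $\sigma$-algebra, and deduces (2) from (1). Where you genuinely diverge is the converse half of (1). The paper approximates the atom $A$ by intervals of $I_L$ via the standard approximation theorem for generated $\sigma$-algebras (Halmos, Theorem A, p.~168), arranges a nested sequence $J_n=(a_n,b_n)$ with $m_L(A\setminus J_n)=0$ and $m_L(J_n\setminus A)<2^{-n}$, and takes $a=\lim_n a_n$, $b=\lim_n b_n$. Your Dedekind-cut argument --- splitting $L$ according to whether $m_L(A\cap(-\infty,l))$ equals $0$ or $m_L(A)$ --- avoids the external citation and makes the minimality of the resulting gap completely explicit, a point the paper leaves implicit; it is arguably cleaner. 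You are also more careful than the paper about measurability: when an endpoint of the gap lies in $\overline{L}\setminus L$, the interval $(a,b)$ itself genuinely fails to belong to $\mathcal{S}_L$ (no set in $\mathcal{S}_L$ separates $a$ from $(a,b)$), so the ``up to measure $0$'' reading is the only tenable one, and your $\tilde A$ construction handles this correctly where the paper's assertion that $(a,b)\in\mathcal{S}$ is loose. One small repair is needed on your side: your justification that $L_{\mathrm{left}}$ and $L_{\mathrm{right}}$ are nonempty invokes a point of $L$ within distance $m_L(A)$ of $\inf L$ (resp.\ $\sup L$), which presupposes that $L$ is bounded; the lemma is applied in the paper to unbounded images of faithful embeddings (e.g.\ the image of $\omega$). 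The fix is one line: if $m_L(A\cap(-\infty,l))=m_L(A)$ for every $l\in L$, then intersecting over $l$ puts $A$, up to a null set, inside $(-\infty,\inf L)$, which misses $X_L$ entirely and forces $m_L(A)=0$; the symmetric argument handles $L_{\mathrm{right}}$.
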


\begin{proof}
Suppose $(a,b)$ is a minimal element of $I_{\overline{L}}$.  Let $\mathcal{S}$ denote the set of all 
$A \in \mathcal{S}_L$ so that $A$ either includes $(a,b)$ or is disjoint from $(a,b)$.  
Then, $I_L \subseteq \mathcal{S}$, and $\mathcal{S}$ is a $\sigma$-algebra.  
Hence, $\mathcal{S} \supseteq \mathcal{S}_L$.  Since $a,b \in \overline{L}$, 
$(a,b) \in \mathcal{S}$.  Therefore, $(a,b)$ is an atom of $\Omega_L$.

Conversely, suppose $A$ is an atom of $\Omega_L$.  Since $I_L$ generates $\Omega_L$, for each $n \in \N$, there is an interval $J_n$ in $I_L$ so that $\mu(J_n \triangle A) < \min\{\mu(A), 2^{-n}\}$ (see e.g. Theorem A p. 168 of \cite{Halmos.1950}).  Thus, $A - J_n$ is null for each $n$.  
Let $J_n = (a_n, b_n)$.  Without loss of generality, we may assume $J_{n+1} \subseteq J_n$.  
Let $a = \lim_n a_n$, and let $b = \lim_n b_n$.  Thus, $\mu((a,b) \triangle A) = 0$, and 
$a,b \in \overline{L}$.  

Part (\ref{lm:omega.L::non.atom}) follows from part (\ref{lm:omega.L::atom}).
\end{proof}

\begin{definition}\label{def:faithful}
Suppose $L$ is a countable linear order, and suppose $F : L \rightarrow \R$ is an order monomorphism. 
We say that $F$ is \emph{faithful} if every adjacency of $\overline{\ran(F)}$ is an adjacency of 
$\ran(F)$.  
\end{definition}

\begin{corollary}\label{cor:faithful}
Suppose $L$ is a countable linear order, and suppose $F : L \rightarrow \R$ is a faithful order isomorphism.
Then, the number of atoms of $\Omega_{\ran(F)}$ is the number of adjacencies of $L$.
\end{corollary}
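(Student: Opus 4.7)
The plan is to set $M := \ran(F)$ and chain together four bijections:
\[
\{\text{adjacencies of } L\} \leftrightarrow \{\text{adjacencies of } M\} \leftrightarrow \{\text{adjacencies of } \overline{M}\} \leftrightarrow \{\text{minimal elements of } I_{\overline{M}}\} \leftrightarrow \{\text{atoms of } \Omega_M\}/\!\sim,
\]
where $\sim$ denotes null-equivalence. The first bijection is immediate, since $F$ restricts to an order isomorphism $L \to M$ and the notion of adjacency is purely order-theoretic. The third is a direct unwinding of definitions: an interval $(a,b) \in I_{\overline{M}}$ is minimal under inclusion if and only if no $c \in \overline{M}$ lies strictly between $a$ and $b$, which is precisely what it means for $a$ and $b$ to be adjacent in $\overline{M}$.

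For the second bijection, the forward direction ``adjacency of $\overline{M}$ implies adjacency of $M$'' is exactly the content of faithfulness: if $a<b$ are adjacent in $\overline{M}$, then in particular $a, b \in \overline{M}$, so faithfulness gives $a, b \in M$ and adjacency in $M$ follows since $M \subseteq \overline{M}$. For the reverse direction, suppose $a < b$ are adjacent in $M$, i.e., $(a,b) \cap M = \emptyset$. If some $x \in (a,b)$ were in $\overline{M}$, then a sequence $\{x_n\} \subseteq M$ converging to $x$ would eventually land in the open interval $(a,b)$, contradicting $(a,b) \cap M = \emptyset$. Hence $(a,b) \cap \overline{M} = \emptyset$, and since $a,b \in M \subseteq \overline{M}$, they are adjacent in $\overline{M}$.

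For the final bijection, Lemma~\ref{lm:omega.L}.\ref{lm:omega.L::atom} provides both a map from minimal elements of $I_{\overline{M}}$ into the atoms of $\Omega_M$ and a map back (well-defined modulo null sets). To see this is a bijection of equivalence classes, note that two distinct minimal intervals in $I_{\overline{M}}$ are disjoint open intervals of positive Lebesgue measure, hence their symmetric difference has positive measure and they cannot represent the same atom; conversely, Lemma~\ref{lm:omega.L}.\ref{lm:omega.L::atom} says every atom is null-equivalent to some such minimal interval, and that interval is unique because any two open intervals of positive measure that agree modulo null sets must coincide.

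There is no genuine obstacle here; the argument is essentially a routine chain of definitional equivalences. The one place where a nontrivial hypothesis enters is the forward direction of the second bijection, where faithfulness is indispensable: without it, $\overline{M}$ could acquire adjacencies between limit points not belonging to $M$, producing atoms of $\Omega_M$ with no counterpart in $L$.
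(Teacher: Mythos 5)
Your proof is correct and is exactly the argument the paper intends: the corollary is stated without proof because it is meant to follow immediately from Lemma~\ref{lm:omega.L}(\ref{lm:omega.L::atom}) together with Definition~\ref{def:faithful}, via precisely the chain of identifications you spell out (adjacencies of $L$ $=$ adjacencies of $\ran(F)$ $=$ adjacencies of $\overline{\ran(F)}$ by faithfulness $=$ minimal elements of $I_{\overline{\ran(F)}}$ $=$ atoms of $\Omega_{\ran(F)}$ up to null sets). Your explicit verification that distinct minimal intervals are disjoint, and hence yield distinct atom classes, is a worthwhile detail the paper leaves tacit.
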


\begin{lemma}\label{lm:faithful}
There is a computable operator $G : 2^\N \times \N \rightarrow \Q$ so that whenever $f$ is the diagram 
of a linear order $L$, the map $G(f;\cdot)$ is a faithful embedding of 
$L$ into $\R$.
\end{lemma}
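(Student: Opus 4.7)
The plan is to define $G$ by a bisection procedure that performs a back-and-forth embedding into $\Q$.  Set $G(f;0) = 0$, and inductively assume that $G(f;0), \dots, G(f;n-1)$ have been defined as distinct rationals whose $<$-order on $\R$ agrees with the $L$-order on $\{0, \dots, n-1\}$ (the latter is uniformly decidable from $f$).  To compute $G(f;n)$, query $f$ finitely many times to determine the $L$-position of $n$ among $\{0,\dots,n-1\}$.  If $n$ is $L$-above all of them, set $G(f;n) = \max_{i<n} G(f;i) + 1$; if $L$-below all of them, set $G(f;n) = \min_{i<n} G(f;i) - 1$; otherwise, letting $i$ and $j$ be the immediate $L$-predecessor and $L$-successor of $n$ within $\{0,\dots,n-1\}$, set $G(f;n) = \tfrac{1}{2}(G(f;i) + G(f;j))$.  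A routine induction shows that $F := G(f;\cdot)$ is a computable order monomorphism into $\Q \subseteq \R$.

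The nontrivial part is verifying that $F$ is faithful, that is, every adjacency of $\overline{\ran(F)}$ is already an adjacency of $\ran(F)$.  Suppose toward a contradiction that $(a,b)$ is an adjacency of $\overline{\ran(F)}$ with $a \notin \ran(F)$ (the case $b \notin \ran(F)$ is symmetric).  For each sufficiently large $n$ let $i_n \le n$ be the index with $F(i_n) = \max\{F(k) : k \le n,\ F(k) \le a\}$, and let $j_n$ be defined analogously on the right; any $F(m)$ strictly between $F(i_n)$ and $F(j_n)$ would necessarily lie in $(a,b)$, contradicting closure-adjacency, so $F(i_n)$ and $F(j_n)$ are consecutive in $\{F(0),\dots,F(n)\}$.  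A short elementary argument, using that $a \in \overline{\ran(F)}$ and that nothing lies in $(a,b)$, forces $F(i_n) \nearrow a$.

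The key observation is how $i_n$ can strictly increase: this occurs at step $k$ only if $F(k)$ is placed by the bisection rule strictly between $F(i_{k-1})$ and $F(j_{k-1})$, in which case $F(k) = \tfrac{1}{2}(F(i_{k-1}) + F(j_{k-1}))$.  The condition $F(k) \le a$ is then equivalent to $F(j_{k-1}) \le 2a - F(i_{k-1})$.  Since $F(j_{k-1}) \ge b > a$ while $F(i_{k-1}) \to a$, we have $2a - F(i_{k-1}) \to a < b \le F(j_{k-1})$, so the required inequality fails for all sufficiently large $k$.  Hence only finitely many updates of $i_n$ are possible, so $F(i_n)$ stabilizes at some $F(i^*) \in \ran(F)$, contradicting $a \notin \ran(F)$; the mirror argument yields $b \in \ran(F)$.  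I expect the main obstacle to be exactly this faithfulness verification, and within it the geometric estimate that, once $F(i_n)$ gets close enough to $a$, the midpoint produced by any further bisection between $F(i_n)$ and $F(j_n)$ is forced inside $(a,b)$; everything else is bookkeeping.
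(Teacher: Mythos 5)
Your construction is the same as the paper's (place each new element at $\max+1$, $\min-1$, or the midpoint of the gap between its immediate predecessor and successor among the already-placed points), and your faithfulness verification rests on the same geometric fact the paper uses, namely that once the best lower approximation to $a$ is within $b-a$ of $a$, any midpoint taken with a point at or above $b$ must land in the forbidden interval $(a,b)$; you package this as ``the index $i_n$ is updated only finitely often,'' while the paper exhibits a single offending step via sequences converging to $a$ and $b$, but these are the same argument. The proposal is correct and essentially matches the paper's proof.
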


\begin{proof}
Let $f \in 2^\N$.  Set $G(f;0) = 0$.  Define $G(f; s + 1)$ as follows.  Set:
\begin{eqnarray*}
M & = & \max\{G(f;t)\ :\ t \leq s\}\\
m & = & \min\{G(f;t)\ :\ t \leq s\}.
\end{eqnarray*}
If $f(\Quinequote{t \leq s+1}) = 1$ for all $t \leq s$, then set $G(f; s + 1) = M + 1$.  
If $f(\Quinequote{t \leq s+1}) = 0$ for all $t \leq s$, then set $G(f; s + 1) = m - 1$. 
Suppose neither of these two cases holds.  Set:
\begin{eqnarray*}
m_0 & = & \min\{G(f; t)\ :\ f(\Quinequote{ s +1 \leq t}) = 1\}\\
M_0 & = & \max\{G(f; t)\ :\ f(\Quinequote{ t \leq s +1}) = 1\}\\
G(f;s+1) & = & \frac{1}{2}(m_0 + M_0)
\end{eqnarray*}
Suppose $f$ is the diagram of a linear order $L = (\N, \leq_0)$.  
Let $F(t) = G(f;t)$.  
We show $F$ is faithful.  Let 
$\{a,b\}$ be an adjacency of $\overline{\ran(F)}$.  Without loss of generality, assume $a < b$.  
By way of contradiction, suppose one of $a,b$ does not belong of $\ran(F)$.  
We consider the case where neither belongs to $\ran(F)$; the other cases are handled similarly.   
Since $a,b$ are boundary points of $\ran(F)$, there exist increasing sequences 
$\{s_j\}_{j \in \N}$ and $\{t_j\}_{j \in \N}$ so that $\{F(s_j)\}_{j \in \N}$ is increasing, 
$\{F(t_j)\}_{j \in \N}$ is decreasing, $\lim_j F(s_j) = a$, and $\lim_j F(t_j) = b$.  There is a $\delta > 0$
so that whenever $(x,y) \in (a - \delta, a) \times (b, b + \delta)$, $\frac{1}{2}(x + y) \in (a,b)$.  
There is a $j_0 \in \N$ so that $F(s_j) > a - \delta$ and $F(t_j) < b + \delta$ whenever $j \geq j_0$.
It then follows from the definition of $G$ that $F(s_{j_0 + 1}) \in (a,b)$- a contradiction.
\end{proof}

\begin{proposition}\label{prop:transfer}
Suppose $p \geq 1$ is computable.   Then, there is a computable operator $F : 2^\N \rightarrow \N^\N$ so that for all $f \in 2^\N$, if $f$ is the diagram of a linear order $L$, then $F(f)$ names a presentation $L^p(\Omega)^\#$ so that the number of atoms of $\Omega$ is the number of adjacencies of $L$ and so that $L^p[0,1]$ isometrically embeds into $L^p(\Omega)$ if 
there is an interval of $L$ that is isomorphic to $\Q$.  
\end{proposition}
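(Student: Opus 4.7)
The plan is to combine the faithful embedding supplied by Lemma~\ref{lm:faithful} with the measure space construction of Definition~\ref{def:interval.algebras}, taking as distinguished vectors the indicator functions of open intervals whose endpoints lie in the image of the embedding.

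First, given $f \in 2^\N$, run the computable operator $G$ of Lemma~\ref{lm:faithful} to obtain a rational-valued map $F_0 : \N \to \Q$. If $f$ is the diagram of a linear order $L = (\N, \leq_L)$, then by Lemma~\ref{lm:faithful} $F_0$ is a faithful order embedding of $L$ into $\R$, and in particular $F_0$ is injective. Form the measure space $\Omega = \Omega_{\ran(F_0)}$; Corollary~\ref{cor:faithful} guarantees that the number of atoms of $\Omega$ equals the number of adjacencies of $L$.

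Next, I would define the $n$-th distinguished vector of $L^p(\Omega)^\#$ as follows: decode $n$ as a pair $(s,t) \in \N^2$, and set $R(n) = \Ind_{(F_0(s), F_0(t))}$ if $F_0(s) < F_0(t)$ and $R(n) = \zerovec$ otherwise. Since every $I \in I_{\ran(F_0)}$ appears as some $(F_0(s), F_0(t))$, the linear span of $\ran(R)$ contains $\Ind_I$ for every $I \in I_{\ran(F_0)}$, hence contains indicators of all finite disjoint unions of such intervals. Standard approximation for $\sigma$-algebras generated by an algebra shows that these indicators are dense in $L^p(\Omega)$, giving linear density of $R$. To show $L^p(\Omega)^\#$ is computable, observe that any rational linear combination $\sum_j \alpha_j R(n_j)$ is a step function whose break points $F_0(s_j), F_0(t_j)$ are rational and uniformly computable from $f$; refining to a common partition by rational intervals one obtains an explicit rational expression for $\sum_j \alpha_j R(n_j)$ as a simple function, whose $L^p$-norm to the $p$-th power is a finite rational sum which can then be $p$-th rooted to arbitrary precision since $p$ is computable. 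Packaging this algorithm as an enumeration of the diagram yields the computable operator $F$. When $f$ fails to be a linear order diagram, $F(f)$ still names \emph{some} Banach space presentation (possibly trivial), which is harmless for the statement.

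Finally, suppose $L$ contains a convex subset $J$ order-isomorphic to $\Q$. Let $a = \inf F_0[J]$ and $b = \sup F_0[J]$; since $J$ is densely ordered without endpoints and has at least two elements, $a < b$ and $F_0[J]$ is dense in $(a,b)$. Consequently, for every rational subinterval $(x,y) \subseteq (a,b)$ we may choose $s_n, t_n \in J$ with $F_0(s_n) \downarrow x$ and $F_0(t_n) \uparrow y$, so that $(x,y) = \bigcup_n (F_0(s_n), F_0(t_n)) \in \mathcal{S}_{\ran(F_0)}$. Thus the Borel $\sigma$-algebra on $(a,b)$ is contained in $\mathcal{S}_{\ran(F_0)}$, and the restriction-of-functions map provides an isometric embedding of $L^p((a,b),\text{Borel},\mathrm{Leb})$ into $L^p(\Omega)$. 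Rescaling, this is $L^p[0,1]$, as desired.

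The main obstacle I anticipate is the bookkeeping in Step~2: verifying uniformly in $f$ that the norm computation terminates and that $F$ outputs a bona fide name even when $f$ is not a linear-order diagram, while still making the implication ``$f$ names $L \Rightarrow F(f)$ names $L^p(\Omega)^\#$'' hold on the nose. Everything else is either a direct invocation of previously established results (Lemma~\ref{lm:faithful}, Corollary~\ref{cor:faithful}) or a routine measure-theoretic density argument.
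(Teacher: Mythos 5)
Your construction is essentially the one in the paper: apply the faithful embedding $G$ of Lemma~\ref{lm:faithful}, form $\Omega_{\ran(G(f;\cdot))}$, take as distinguished vectors the indicator functions $\Ind_{(a,b)}$ with $a,b \in \ran(G(f;\cdot))$, and read off the atom count from Corollary~\ref{cor:faithful}. Your computability and linear-density checks are also in the spirit of the paper, which packages them by sitting inside a fixed computable presentation of $L^p(\R)$ whose distinguished points are the rational simple functions; that is only a bookkeeping difference.

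The one genuine gap is in your treatment of the $L^p[0,1]$ embedding. You assert that for a convex $J \subseteq L$ isomorphic to $\Q$, the image $F_0[J]$ is dense in $(a,b)$ where $a = \inf F_0[J]$ and $b = \sup F_0[J]$, ``since $J$ is densely ordered without endpoints.'' That inference is invalid on its own: an order embedding of $\Q$ into $\R$ need not have topologically dense image in its convex hull (map $\Q \cap (-\infty,0)$ identically and shift $\Q \cap [0,\infty)$ by $1$; the image misses $(0,1)$ entirely). The claim is true for the particular $F_0$ at hand, but only because $F_0$ is \emph{faithful} and $J$ is an \emph{interval} of $L$: if $(c,d) \subseteq (a,b)$ were a maximal gap of $F_0[J]$, then convexity of $J$ forces $(c,d) \cap \ran(F_0) = \emptyset$, so its endpoints form an adjacency of $\overline{\ran(F_0)}$, hence by faithfulness an adjacency of $\ran(F_0)$ whose members lie in $J$ --- contradicting the density of $J$. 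You should either supply this argument or sidestep density altogether, which is what the paper's appeal to Lemma~\ref{lm:omega.L} amounts to: for $s <_L t$ in $J$, the interval $(F_0(s), F_0(t))$ has positive measure and, since atoms of $\Omega_{\ran(F_0)}$ correspond via faithfulness to adjacencies of $L$ and no adjacency can lie inside the dense interval $J$, this interval meets no atom; hence $\Omega_{\ran(F_0)}$ is not purely atomic and Theorem~\ref{thm:classification} already yields that $L^p[0,1]$ embeds isometrically into $L^p(\Omega)$.
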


\begin{proof}
Let $G$ be a computable operator as in Lemma \ref{lm:faithful}.  Let $S$ be a structure on 
$L^p(\R)$ so that $\{S(n)\}_n$ enumerates all rational simple functions and so that 
$(L^p(\R), S)$ is computable.  Without loss of generality, we assume $S(0) = \zerovec$.  
There is a total computable function $R : 2^\N\times \N \rightarrow \N$ so that for all $f \in 2^\N$, $S \circ R(f)$ enumerates
\[
\{\zerovec\} \cup \{\Ind_{(a,b)}\ :\ a,b \in \ran(G(f))\ \wedge\ a < b\}.
\] 
Let $H(f; n) = S(R(f;n))$.  Let $\mathcal{B}_f$ denote the closed linear span of $\ran(H(f))$.  
Thus, if $f$ is the diagram of a linear order $L$, then $B_f$ is 
$L^p(\Omega_L)$.  Since $G$ and $(L^p(\R), S)$ are computable, there is a computable operator 
$F: 2^\N \rightarrow \N^\N$ so that $F(f)$ enumerates the diagram of $(\B_f, H(f))$.  By Lemma 
\ref{lm:omega.L}, $F$ has the required properties.
\end{proof}

\begin{lemma}\label{lm:Lp01.embed}
Suppose $p$ is a computable real so that $p \geq 1$ and $p \neq 2$.  There is a $\Sigma_3^0$ predicate $E \subseteq \N^\N$ so that whenever $f$ names an $L^p$ space presentation $\B^\#$, $E(f)$ if and only if $L^p[0,1]$ isometrically embeds into $\B$.
\end{lemma}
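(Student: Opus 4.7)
The plan is to combine the classification of separable $L^p$ spaces (Theorem~\ref{thm:classification}) with the formal-disintegration machinery of the previous subsection. By the classification, for $p \neq 2$, $L^p[0,1]$ isometrically embeds into a separable $L^p$ space $\B$ if and only if the underlying measure space $\Omega$ of $\B$ is not purely atomic; equivalently, the total atomic mass of $\Omega$ is strictly less than the total mass of $\Omega$. The goal is to detect this mass gap in a $\Sigma_3^0$ fashion using the almost norm-maximizing chain decomposition supplied by $\Phidecomp$ applied to the disintegration supplied by $\Phidis$.

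Fix a computable name $h$ of $p$. Given $f$ naming $\B^\#$, let $\psi := \Phidis(f,h)$ and let $\{C_m\}_{m<\kappa}$ be the chain decomposition of $\dom(\psi)$ named by $\Phidecomp(f, \Phidis(f,h), h)$. Define $T := \norm{\psi(\emptyset)}^p$ and $T_m := \inf_{\nu \in C_m}\norm{\psi(\nu)}^p$. Passing through Theorem~\ref{thm:disint.Lp}, $\B$ is isometrically isomorphic to $L^p(\Omega)$ with $T$ equal to the total measure $\mu(\Omega)$; by Theorem~\ref{thm:limits.chains}, each $T_m$ is either $0$ or the $p$-th power of the norm of an atom of $\Omega$, with the nonzero $T_m$'s bijecting with the atoms. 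Hence $\sum_m T_m$ is the total atomic mass of $\B$, and so $L^p[0,1]$ embeds into $\B$ if and only if $\sum_m T_m < T$. By monotonicity of partial sums, this is equivalent to the existence of a positive rational $\epsilon$ such that for every finite list of distinct indices $m_1,\ldots,m_k$ with each $C_{m_j}$ nonempty, $\sum_{j} T_{m_j} \leq T - \epsilon$.

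To verify the complexity, note that since $\psi$ is antitone along each chain, $T_m$ is the limit of the monotonically decreasing sequence $(\norm{\psi(\nu)}^p)_{\nu \in C_m}$. Hence $\sum_j T_{m_j} \leq T - \epsilon$ is equivalent to: for every rational $\delta > 0$ there exist $\nu_{m_j} \in C_{m_j}$ with $\sum_j \norm{\psi(\nu_{m_j})}^p < T - \epsilon + \delta$. The innermost clause is $\Sigma_1^0$ in $f$, because upper bounds on $\norm{\psi(\nu)}^p$ and membership $\nu \in C_m$ are both $\Sigma_1^0$ given the names of $\psi$ and of the chain decomposition (both computable from $f$), and $T$ is computable from $f$ to arbitrary precision. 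Folding the $\Sigma_1^0$ side condition ``each $C_{m_j}$ is nonempty'' into the antecedent of an implication, each instance of the outer universal becomes a $\Pi_2^0$ statement, so $E(f)$ has the form $\exists \epsilon\, \forall (m_1,\ldots,m_k,\delta)\, [\Pi_2^0]$, which collapses to $\Sigma_3^0$. The main obstacle is justifying the identification of $T$ and the $T_m$ with measure-theoretic invariants of $\Omega$, which is accomplished by applying Theorem~\ref{thm:disint.Lp} to pass from $\psi$ to a concrete $L^p$ disintegration and then invoking Theorem~\ref{thm:limits.chains}.
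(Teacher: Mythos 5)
Your proposal is correct and follows essentially the same route as the paper: both reduce the embedding of $L^p[0,1]$ to the strict inequality $\sum_n \norm{g_n}_p^p < \norm{\psi(\emptyset)}_p^p$ for the $\preceq$-infima $g_n$ of an almost norm-maximizing chain decomposition obtained via $\Phidis$ and $\Phidecomp$, and then express this as $\exists\,\forall\,(\Sigma_1^0)$ using the fact that the chain norms decrease to the infima. The only cosmetic difference is that you justify the criterion through Theorem~\ref{thm:classification} and the atom-counting in Theorem~\ref{thm:limits.chains}, whereas the paper cites Lemma~3.4 of \cite{Brown.McNicholl.2018} directly for the equivalence with $\psi(\emptyset) \neq \sum_n g_n$.
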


\begin{proof}
Suppose $\psi$ is a disintegration of an $L^p$ space $\B$, and let $\{C_n\}_{n < \kappa}$ be a decomposition of $\dom(\psi)$ into almost norm-maximizing chains.  For each $n < \kappa$, 
let $g_n$ denote the $\preceq$-infimum of $\phi[C_n]$.  Then, by Lemma 3.4 of \cite{Brown.McNicholl.2018}, 
$L^p[0,1]$ embeds into $\B$ if and only if $\psi(\emptyset) \neq \sum_n g_n$.  Since $g_0$, $g_1$, $\ldots$ are disjointly supported components of $\psi(\emptyset)$, it follows that $\norm{\psi(\emptyset) - \sum_n g_n}^p = \norm{\psi(\emptyset)}^p - \sum_n \norm{g_n}^p$.  Thus: 
\begin{eqnarray*}
\psi(\emptyset) \neq \sum_n g_n & \Leftrightarrow & \exists k \forall n < \kappa\ \sum_{j = 0}^n \norm{g_n}_\B^p < \norm{\psi(\emptyset)}_\B^p - 2^{-k}\\
& \Leftrightarrow & \exists k \forall n < \kappa \exists \nu_0, \ldots, \nu_n\ [(\forall j \leq n\ \nu_j \in C_j) \Rightarrow\ \sum_{j \leq n} \norm{\psi(\nu_j)}_\B^p < \norm{\psi(\emptyset)}_\B^p - 2^{-k}].
\end{eqnarray*}
It now follows via the technology of Proposition \ref{prop:interpretation} that there is a $\Sigma_3^0$ predicate $P \subseteq (\N^\N)^3$ so that for all $f,g,h \in \ N^\N$, if $f$ names an $L^p$ space presentation $\B^\#$, and if $g$ names a disintegration $\psi$ of $\B^\#$, and if $h$ names a decomposition $\{C_n\}_{n < \kappa}$ of $\dom(\psi)$ into almost norm-maximizing chains, then $P(f,g,h)$ if and only if $\psi(\emptyset) \neq \sum_{n < \kappa} g_n$ where for each $n$ $g_n$ denotes the $\preceq$-infimum of $\phi[C_n]$.  The existence of $E$ now follows from the uniformity of Theorems \ref{thm:disint.comp} and \ref{thm:decomp}.
\end{proof}

\begin{lemma}\label{lm:sigma.03}
Suppose $P \subseteq \N^\N$ is $\Sigma_3^0$.  Then, there is a computable operator 
$F : \N^\N \rightarrow 2^\N$ so that for all $f \in \N^\N$, $F(f)$ is the diagram of a linear order $L_f$ so that for some $n \geq 1$, 
\[
L_f \approx \left\{
\begin{array}{cc} 
\eta + n & \mbox{if $P(f)$}\\
\omega & \mbox{otherwise.}\\
\end{array}
\right.
\]
Furthermore, if $A \subseteq \N$ is infinite and computable, then 
we can ensure that $n$ has the form $2^{s_1} + \ldots + 2^{s_m}$ where $s_1, \ldots, s_m \in A$ are distinct.
\end{lemma}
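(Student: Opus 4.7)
Express $P$ as $P(f) \Leftrightarrow \exists x\, Q_x(f)$ with $Q_x(f) := \forall y\, \exists z\, R(f;x,y,z)$ a $\Pi_2^0$ predicate for some computable $R$, and set $\alpha_x^s := \max\{k : \forall y < k\, \exists z \le s\, R(f;x,y,z)\}$, the nondecreasing computable approximation with $\alpha_x^\infty = \infty$ iff $Q_x(f)$ holds. Fix a computable enumeration $s_1 < s_2 < \ldots$ of $A$.

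The construction of $L_f$ is an infinite-injury priority argument of the sort standard in showing $\Pi_3^0$-completeness for the isomorphism class of $\omega$ among computable linear orderings. It uses modules $M_x$ of priority $x$ ($x \in \N$), each attempting to force a dense region $D_x$ below an initial top chain. Stage $0$ enumerates top elements $b_1 < \ldots < b_{n_0}$ with $n_0 := 2^{s_1}$. At each stage $s \geq 1$ we look for the smallest $x \leq s$ whose $\alpha_x^s$ has advanced since $M_x$ was last injured: if such an $x$ exists, $M_x$ acts by enumerating a new element of $D_x$ at a fresh dyadic-rational position (chosen from a density-witnessing enumeration for the interval immediately below $b_1$) and by injuring every $M_{x'}$ with $x' > x$. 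A ``background'' process, triggered whenever the identity of the current minimum successful module changes, simultaneously enumerates a batch of $2^{s_{j+1}}$ additional top elements (using the next unused $s_{j+1} \in A$) above everything already enumerated; this ensures the running top count is always of the form $\sum_i 2^{s_i}$.

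Verification splits into cases. If $\neg P(f)$, then every $\alpha_x^\infty$ is finite, so no module ever stabilizes as the eventual winner: the background therefore fires cofinitely producing an infinite top $\omega$-chain, the dense-region contributions collapse into finite initial segments that are absorbed at the bottom of this chain, and $L_f \cong \omega$. If $P(f)$ with least witness $x_0$, the modules $M_x$ with $x < x_0$ act only finitely often (their $Q_x$ fail), so $M_{x_0}$ is eventually never reinjured and its continued actions populate $D_{x_0}$ with a dense set of order type $\eta$ below $b_1$; the background then fires only finitely often, so $L_f \cong \eta + n$ with $n$ equal to the final top count (automatically of the prescribed form $2^{s_1} + \ldots + 2^{s_m}$ by our batched enumeration). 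The principal technical challenge, standard to $\Pi_3^0$-completeness constructions for $\omega$, is aligning the background trigger with the $\Sigma_3^0$-style stabilization of the true witness and arranging positions so that surviving dense contributions in the $\neg P$ case do not disturb the order type; this is handled with a standard tree-of-strategies priority setup.
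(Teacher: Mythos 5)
Your high-level strategy---guess the least witness $x_0$ of the outer existential quantifier, let the winning guess build a dense block, and tie the growth of a ``top'' part to the guessing---is the same idea that drives the paper's proof, but the two mechanisms you leave unspecified are exactly where the content of the lemma lives, and as described both appear to fail. First, the background trigger. If ``the identity of the current minimum successful module changes'' means the index of the module acting at the current stage, then in the $P(f)$ case the approximation $\alpha_{x_0}^s$ advances at infinitely many stages but not at all of them, and at the stages in between some $M_{x'}$ with $x' > x_0$ acts; so the identity changes infinitely often, the background fires infinitely often, and you get an infinite top chain instead of $\eta + n$. If instead it means the minimum index over all modules that have ever acted, that quantity stabilizes whether or not $P(f)$ holds, and the $\neg P(f)$ case fails to produce the $\omega$-chain. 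I do not see a reading under which both cases survive. Second, the absorption claim. Since infinitely many modules can each act at least once when $\neg P(f)$ (each $\alpha_x$ may advance a positive finite number of times), infinitely many points in total are placed ``immediately below $b_1$,'' i.e., strictly below the entire top chain; an infinite suborder $\tau$ sitting below an $\omega$-chain gives $\tau + \omega$, which is never $\omega$, so these contributions cannot be ``absorbed at the bottom.'' Deferring both issues to ``a standard tree-of-strategies priority setup'' does not discharge them; in fact no injury machinery is needed at all.

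The paper resolves both problems at once with a single geometric device. For each $x$ let $\ell(f;x,s)$ be the length-of-agreement function of the $\Pi_2^0$ matrix, let $\delta(f;s)$ be the least $x$ whose length of agreement attains a new maximum at stage $s$, and at stage $s$ adjoin the $2^{z_s}$ equally spaced dyadic rationals $j2^{-(\delta(f;s)+z_s)}$ for $0 \leq j < 2^{z_s}$, where $z_s - \delta(f;s)$ runs through an increasing sequence from $A$ (this choice is what yields the ``furthermore'' clause). The batch for a guess $x$ lives in $[0, 2^{-x})$, and these intervals are nested: a batch placed for a wrong guess $x' > x_0$ lands inside the region $[0, 2^{-x_0})$ that the true witness is making dense and is silently absorbed there; wrong guesses $x < x_0$ occur only finitely often and contribute only finitely many points outside $[0,2^{-x_0})$; and when $\neg P(f)$ the batches march monotonically toward the accumulation point, so every tail away from it is finite and the order type is $\omega$ (up to orientation). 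There is no separate top-chain process and no injury---the $\omega$-part and the dense part are the same set of points read at different scales. To repair your write-up you would essentially have to reinvent this placement or an equivalent one, so as it stands the proof is incomplete.
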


\begin{proof}
Let $Q \subseteq \N^\N \times \N^3$ be a computable predicate so that for all $f \in \N^\N$, 
\[
P(f) \Leftrightarrow \exists x \forall y \exists z\ Q(f; x,y,z).
\]
Let 
\[
\ell(f;x,s) = \max(\{y \leq s\ :\ \forall y' < y \exists z \leq s\ Q(f; x,y',z)\}\ \cup\ \{0\}).
\]
If for every $x$ there is an $s' < s$ so that $\ell(f; x,s) \leq \ell(f; x, s')$, then let 
$\delta(f; s) = \infty$.  Otherwise, let $\delta(f;s)$ denote the least $x$ so that 
$\ell(f;x,s) > \ell(f;x,s')$ for all $s' < s$.  
Let $z_s$ denote the least positive number so that $z_s - \delta(f;s) \in A$ and so that 
$z_s - \delta(f;s) > z_{s'} - \delta(f;s')$ for all $s' < s$.  
Let: 
\begin{eqnarray*}
X(f; 0) & = & \{0\}\\
X(f; s+ 1) & = & X(f;s) \cup \{j 2^{-(\delta(f;s) + z_s)}\ :\ 0 \leq j < 2^{z_s}\}\\
X(f) & = & \bigcup_s X(f;s)\\
L_f & = & (X(f), <)
\end{eqnarray*}
It follows that $L_f$ is an $f$-computably presentable linear order uniformly in $f$.  
Hence, there is a computable operator $F : \N^\N \rightarrow 2^\N$ so that $F(f)$ is the diagram
of a presentation of $L_f$ for all $f \in \N^\N$.

Suppose $P(f)$, and let $x$ be the least number so that for every $y$ there exists $z$ so that 
$Q(f; x,y,z)$.  Thus, $\delta(f;s) = x$ for infinitely many $s$, and $\delta(f;s) < x$ for only finitely many $x$.  It follows that $[0,2^{-x}) \cap X(f)$ is isomorphic to $\Q$ and that $[2^{-x}, 1] \cap X(f)$ is finite.  Thus, $L_f$ is isomorphic to $\eta + n$ for some $n \geq 1$, and the construction ensures $n$ has the correct form.  

On the other hand, suppose $\neg P(f)$.  Then, $X(f) \cap [2^{-x}, 1]$ is finite for all $x \in \N$.  
It follows that $L_f$ is isomorphic to $\omega$. 
\end{proof}

\begin{corollary}\label{cor:sigma.03}
Suppose $P \subseteq \N^\N$ is $\Sigma_3^0$.  Then, there is a computable operator 
$F : \N^\N \rightarrow \N^\N$ so that for all $f \in \N^\N$, there is a Banach space presentation 
$\B_f^\#$ and an $n \in \N$ so that $F(f)$ names $\B_f^\#$ and 
\[
B_f = \left\{ 
\begin{array}{cc}
\ell^p_n \oplus L^p[0,1] & \mbox{if $P(f)$}\\
\ell^p & \mbox{otherwise.}
\end{array}
\right.
\]
Furthermore, if $A \subseteq \N$ is infinite and computable, then 
we can ensure that $n$ has the form $2^{s_1} + \ldots + 2^{s_m}$ where $s_1, \ldots, s_m \in A$ are distinct.
\end{corollary}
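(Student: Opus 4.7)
The plan is to compose Lemma~\ref{lm:sigma.03} with Proposition~\ref{prop:transfer} and then identify the output via Theorem~\ref{thm:classification}. First I apply Lemma~\ref{lm:sigma.03} to $P$ and to a suitable computable infinite set $A^\star \supseteq A$, obtaining a computable operator $F_1 : \N^\N \to 2^\N$ so that $F_1(f)$ is the diagram of a linear order $L_f \cong \eta + n_f$ when $P(f)$, with $n_f \geq 1$ of the prescribed sum-of-distinct-powers-of-two form using exponents in $A^\star$, and $L_f \cong \omega$ otherwise. Next I apply Proposition~\ref{prop:transfer} at the given exponent $p$ to obtain a computable operator $F_2$ that converts a linear-order diagram into a name of a presentation of $L^p(\Omega_L)$; recall that $\Omega_L$ has exactly as many atoms as $L$ has adjacencies, and $L^p[0,1]$ embeds isometrically into $L^p(\Omega_L)$ whenever $L$ contains a $\Q$-interval. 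Setting $F := F_2 \circ F_1$ produces a computable operator for which $F(f)$ names a presentation of $\B_f := L^p(\Omega_{L_f})$.

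To identify $\B_f$, I split on $P(f)$. If $P(f)$ holds, the $\eta$-block inside $\eta + n_f$ is a $\Q$-interval of $L_f$, so $L^p[0,1]$ embeds isometrically into $\B_f$; the adjacencies of $\eta + n_f$ are exactly the $n_f - 1$ consecutive pairs in the finite tail (none between $\eta$ and the tail, since $\eta$ has no maximum), so $\Omega_{L_f}$ has exactly $n_f - 1$ atoms. Theorem~\ref{thm:classification} then forces $\B_f \cong \ell^p_n \oplus_p L^p[0,1]$ with $n = n_f - 1$. If $\neg P(f)$, then $L_f \cong \omega$ is discretely ordered with no $\Q$-interval; faithfully embedded in $\R$ via Lemma~\ref{lm:faithful}, its adjacencies persist in the closure, so the atoms of $\Omega_{L_f}$ are precisely these countably many consecutive-pair intervals and the ambient space decomposes as their union modulo a countable set of singletons. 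Hence $\Omega_{L_f}$ is purely atomic with $\aleph_0$ atoms, and Theorem~\ref{thm:classification} gives $\B_f \cong \ell^p$.

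The one delicate point is ensuring that $n = n_f - 1$ has the prescribed form $2^{s_1} + \cdots + 2^{s_m}$ with $s_i \in A$ distinct, since Lemma~\ref{lm:sigma.03} pins down that form for $n_f$ rather than for $n_f - 1$. I would address this by choosing $A^\star := A \cup \{0\}$ and slightly modifying the construction in the proof of Lemma~\ref{lm:sigma.03} so as to force the exponent $0$ always to appear among the $s_i$'s---for example by seeding the initial stage of that construction with one extra anchor point in the finite tail that contributes exactly the summand $2^0 = 1$ to $n_f$. Then $n_f = 1 + \sum_{i \geq 2} 2^{s_i}$ with the remaining exponents in $A$, and subtracting the leading $1$ recovers $n$ in the desired form. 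This bookkeeping is the principal---though essentially routine---obstacle; everything else is an immediate composition of the two cited results with the classification theorem.
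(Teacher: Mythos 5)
Your proposal is correct and is exactly the intended argument: the paper states this corollary without proof, as the composition of Lemma~\ref{lm:sigma.03}, Proposition~\ref{prop:transfer} together with Corollary~\ref{cor:faithful}, and the classification in Theorem~\ref{thm:classification}. Your off-by-one observation is a genuine catch rather than a quibble: $\eta + n_f$ has $n_f - 1$ adjacencies and hence yields $n_f - 1$ atoms (the paper itself uses $\eta + n + 1$ to obtain $n$ atoms in Corollary~\ref{cor:pi02}), so the ``Furthermore'' clause does not literally transfer from the lemma, and your repair --- passing to $A \cup \{0\}$, forcing a $2^0$ summand into the finite tail, and subtracting it --- is sound and preserves the only property the corollary is later used for (obtaining distinct atom counts from disjoint sets $A$ in Lemma~\ref{lm:lwr.known.p}).
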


\begin{proof}[Proof of Theorem \ref{thm:known.exp}.\ref{thm:known.exp.lp}]
We first show there is a $\Pi_3^0$-predicate $P$ so that whenever $f \in \N^\N$ names an $L^p$ space presentation $\B^\#$, $\ell^p$ isometrically embeds into $\B$ if and only if 
$P(f)$.   To this end, suppose $\psi$ is a disintegration of an $L^p$ space $\B$, and let 
$\{C_n\}_{n < \kappa}$ be a decomposition of $\dom(\psi)$ into almost norm-maximizing chains.  Let $g_n$ denote the $\preceq$-infimum of $\phi[C_n]$ for each $n$.  Then, by Theorem \ref{thm:classification}, $\ell^p$ embeds into $\B$ if and only if $g_n \neq \zerovec$ for infinitely many $n$.  Hence, $\ell^p$ embeds into $\B$ if and only if 
\[
\forall n\ \exists m,k\ \forall \nu \in C_m\ [m > n\ \wedge\ \norm{\psi(\nu)}_\B > 2^{-k}].
\]
The existence of $P$ follows from the technology of Proposition \ref{prop:interpretation}.

It now follows from Lemma \ref{lm:Lp01.embed} that the set of all names of 
presentations of $\ell^p$ is $\Pi_3^0$.  Completeness is obtained from Corollary \ref{cor:sigma.03}.
\end{proof}

\begin{lemma}\label{lm:pi02}
Fix $n \geq 1$.  Suppose $P \subseteq \N^\N$ is $\Pi_2^0$.  Then, there is a computable operator
$F: \N^\N \rightarrow 2^\N$ so that for every $f \in \N^\N$, there is an $n \in \N$ so that $F(f)$ is the diagram of a linear order $L_f$ so that 
\[
L_f \approx \left\{
\begin{array}{cc}
\eta & \mbox{if $P(f)$}\\
\eta + n & \mbox{if $\neg P(f)$}\\
\end{array}
\right.
\]
\end{lemma}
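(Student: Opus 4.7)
The plan is to build $L_f$ as a computable linear order on a computable subset of $\Q$ (standardly identified with $\N$) by a ``moving-tail absorption'' construction keyed to the canonical approximation to the $\Pi_2^0$ predicate.  Write $P(f) \Leftrightarrow \forall x\, \exists y\, R(f;x,y)$ for a computable $R$, and define $e(f;s)$ to be the largest $m$ with $\forall x < m\, \exists y \leq s\, R(f;x,y)$; then $e(f;\cdot)$ is non-decreasing in $s$, and $e(f;s) \rightarrow \infty$ if and only if $P(f)$ holds.

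At stage $s$, set the frontier $a_s := 1 + 2n \cdot e(f;s)$, add the tail block $B_s := \{a_s, a_s+1, \ldots, a_s+n-1\}$ to $L_f$, and also enumerate every dyadic rational in the interval $(-s,\, a_s)$ whose denominator is at most $2^s$.  This procedure is uniformly computable from $f$, so after packaging the induced comparisons into the diagram of a presentation on $\N$ one obtains the required operator $F : \N^\N \rightarrow 2^\N$.

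If $P(f)$, then $a_s \rightarrow \infty$, so for every real $x$ and every $k$, eventually $x \in (-s, a_s)$ and a dyadic rational in $L_f$ lies within $2^{-k}$ of $x$; hence $L_f$ is dense in $\R$.  Moreover, each tail block $B_s$ eventually lies strictly below $a_{s'}$ for some later $s'$, so later densifications surround its points with dyadic rationals on both sides, and no point of $L_f$ remains isolated, giving $L_f \cong \eta$.  If $\neg P(f)$, then $e(f;s)$ stabilizes at some $e^*$ and the frontier settles at $a^* := 1 + 2n\, e^*$; no dyadic rational is ever enumerated in $(a^*, \infty)$.  The cumulative density yields $L_f \cap (-\infty, a^*) \cong \eta$ (unbounded below, dense, with unattained supremum $a^*$), while the final block $\{a^*, a^*+1, \ldots, a^*+n-1\}$ sits above as a discrete chain of length $n$, with $a^*$ having no immediate predecessor in $L_f$; hence $L_f \cong \eta + n$.

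The chief technical point is tracking the absorption in the limit: once the frontier passes a past block, subsequent stages add dyadic rationals of arbitrarily fine precision on both sides of each of its elements, so in the $P(f)$ case past blocks genuinely cease to be isolated, while in the $\neg P(f)$ case no additions are ever made above the final frontier, preserving the last block as the isolated $n$-tail.  The rest of the verification is routine bookkeeping, and the whole procedure yields $F$ uniformly.
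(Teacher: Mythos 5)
Your construction is correct and is essentially the paper's own argument recast in dynamic form: both grow a dense order below a frontier governed by the $\Sigma_1^0$ approximation to the $\Pi_2^0$ predicate and perch an $n$-element block on top, the block being absorbed into the dense part exactly when the frontier tends to infinity (the paper does this statically via $X_0(f)=\bigcup_{z\in P_1(f)}(0,z+1)\cap\Q$ and $X_1(f)=\bigcup_{z\in P_1(f)}\{z+1,\ldots,z+n\}$). One small repair: define $e(f;s)$ as the largest $m\le s$ with $\forall x<m\,\exists y\le s\,R(f;x,y)$, since without the cap ``the largest $m$'' need not exist and is not computable from $f$ and $s$.
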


\begin{proof}
Let $Q \subseteq \N^\N \times \N^2$ be a computable predicate so that for all $f \in \N^\N$, 
\[
P(f) \Leftrightarrow \forall x \exists y Q(f; x,y).
\]
Let:
\begin{eqnarray*}
P_1(f; z) & \Leftrightarrow & \forall x < z \exists y\ Q(f; x,y)\\
X_0(f) & = & \bigcup_{z \in P_1(f)} (0,z+1) \cap \Q\\
X_1(f) & = & \bigcup_{z \in P_1(f)} \{z+1, \ldots, z + n\}\\
X(f) & = & X_0(f) \cup X_1(f)\\ 
L_f & = & (X(f), <)
\end{eqnarray*}
Hence, $L_f$ is an $f$-computably presentable linear order uniformly in $f$.  Thus, there is a 
computable operator $F : \N^\N \rightarrow 2^\N$ so that $F(f)$ is the diagram of $L_f$ for all $f \in \N^\N$.  

If $P(f)$, then $X(f) = X_0(f) = (0, \infty) \cap \Q$.  
If $\neg P(f)$, and if $z_0$ is the least number so that $\neg Q(f; z,y)$ for all $y \in \N$, then 
\[
X(f) = (0, z_0+1) \cap \Q \cup \{z_0 + 1, \ldots, z_0 + n\}.
\]
Therefore, 
\[
L_f \approx \left\{
\begin{array}{cc}
\eta & \mbox{if $P(f)$}\\
\eta + n & \mbox{if $\neg P(f)$}\\
\end{array}
\right.
\]
\end{proof}

\begin{corollary}\label{cor:pi02}
Let $p$ be a computable real so that $p \geq 1$, and let $n \geq 1$.  
Suppose $P \subseteq \N^\N$ is $\Pi_2^0$.  Then, there is a computable operator 
$F : \N^\N \rightarrow \N^\N$ so that for all $f \in \N^\N$, $F(f)$ names a Banach space presentation $\B^\#$ so that 
\[
\B = \left\{
\begin{array}{cc}
L^p[0,1] & \mbox{if $P(f)$}\\
\ell^p_n \oplus L^p[0,1] & \mbox{otherwise}. \\
\end{array}
\right.
\]
\end{corollary}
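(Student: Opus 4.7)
The plan is to compose the linear-order operator of Lemma \ref{lm:pi02} with the transfer operator of Proposition \ref{prop:transfer}. Concretely, apply Lemma \ref{lm:pi02} with parameter $n + 1$ in place of $n$ to obtain a computable $G : \N^\N \to 2^\N$ such that $G(f)$ is the diagram of a linear order $L_f$ with $L_f \approx \eta$ when $P(f)$ and $L_f \approx \eta + (n+1)$ otherwise. Let $F_0 : 2^\N \to \N^\N$ be the operator furnished by Proposition \ref{prop:transfer} for the exponent $p$, and set $F = F_0 \circ G$. Then $F$ is computable, and $F(f)$ names a presentation of an $L^p$-space $L^p(\Omega_f)$ for every $f$.

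To verify correctness, analyze the two cases using Corollary \ref{cor:faithful} and the second conclusion of Proposition \ref{prop:transfer}. If $P(f)$ holds, then $L_f \approx \eta$ has no adjacencies, so $\Omega_f$ is non-atomic; and $L_f$ contains an interval (namely itself) isomorphic to $\Q$, so $L^p[0,1]$ embeds isometrically into $L^p(\Omega_f)$. In particular $L^p(\Omega_f)$ is nonzero and non-atomic, so the non-atomic case of Theorem \ref{thm:classification} forces $L^p(\Omega_f) = L^p[0,1]$.

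If $P(f)$ fails, then $L_f \approx \eta + (n+1)$: the $(n+1)$-element finite suffix contributes exactly $n$ adjacencies, and no further adjacency occurs at the join between $\eta$ and the suffix since $\eta$ has no maximum. Hence $\Omega_f$ has exactly $n$ atoms. Moreover, $L_f$ still contains $\eta$ as an initial interval isomorphic to $\Q$, so $L^p[0,1]$ embeds into $L^p(\Omega_f)$. Thus $\Omega_f$ is not purely atomic, and Theorem \ref{thm:classification} yields $L^p(\Omega_f) = \ell^p_n \oplus_p L^p[0,1]$.

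There is no real obstacle here; the only mildly delicate point is the off-by-one in counting adjacencies of $\eta + m$ (a finite chain of length $m$ contributes $m-1$ internal adjacencies), which is why one must invoke Lemma \ref{lm:pi02} with parameter $n+1$ rather than $n$ to secure exactly $n$ atoms in $\Omega_f$.
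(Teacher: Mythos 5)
Your proposal is correct and follows the paper's proof essentially verbatim: compose the operator of Lemma \ref{lm:pi02} (invoked with parameter $n+1$, so the negative case yields $\eta + n + 1$) with the transfer operator of Proposition \ref{prop:transfer}, then count adjacencies/atoms and apply Theorem \ref{thm:classification}. Your explicit handling of the off-by-one (a chain of $n+1$ elements atop $\eta$ contributes exactly $n$ adjacencies, with none at the join since $\eta$ has no maximum) is in fact stated more carefully than in the paper, whose concluding sentences inadvertently swap the two cases.
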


\begin{proof}
By Lemma \ref{lm:pi02}, there is an operator $G : \N^\N \rightarrow 2^\N$ so that for all $f \in \N^\N$, 
$G(f)$ is the diagram of a linear order $L_f$ such that 
\[
L_f \approx \left\{
\begin{array}{cc}
\eta & \mbox{if $P(f)$}\\
\eta + n + 1 & \mbox{if $\neg P(f)$}\\
\end{array}
\right.
\]
Let $F_0$ be a computable operator as in Proposition \ref{prop:transfer}, and let 
$F = F_0 \circ G$.  

Let $f \in \N^\N$.  Let $L^p(\Omega_f)^\#$ be an $L^p$-space presentation so that $F(f)$ is the diagram of 
$L^p(\Omega_f)^\#$.  Since $L_f$ has an interval that is isomorphic to $\Q$, $L^p[0,1]$ isometrically embeds into $L^p(\Omega)$.  If $P(f)$, then $L_f$ has exactly $n$ adjacencies, and so $\Omega_f$ has exactly $n$ atoms thus ensuring that $L^p(\Omega_f)$ is isometrically isomorphic to 
$\ell^p_n \oplus L^p$.  But, if $\neg P(f)$, then $L_f$ has no adjacencies, and so $L^p(\Omega_f)$ is isometrically isomorphic to $L^p[0,1]$.  
\end{proof}

\begin{proof}[Proof of Theorem \ref{thm:known.exp}.\ref{thm:known.exp.Lp01}]
Fix a computable name $u$ of $p$.

We first show that there is a $\Pi_2^0$ predicate $H \subseteq \N^\N$ so that whenever $f$ names an $L^p$ space presentation $\B^\#$, $H(f)$ if and only if $\B$ is isometrically isomorphic to $L^p[0,1]$.  
To this end, let $J(f,g,h)$ if and only if $f \models \norm{\tau} > 0$ for some $\tau$ and for every $k,n \in \N$, 
if $(f,g,h) \models \nu \in C_n$ for some $\nu \in C_n$, then there is a $\nu$ so that 
$(f,g,h) \models \nu \in C_n$ and $(f,g) \models \norm{\phi(\nu)} < 2^{-k}$.  
Thus, $J$ is $\Pi_2^0$.   Suppose $f$ names $L^p(\Omega)^\#$, $g$ names a disintegration $\psi$ of 
$L^p(\Omega)^\#$, and $h$ names a partition $\{C_n\}_{n < \kappa}$ of $\dom(\psi)$ into almost norm-maximizing chains.  Let $g_n$ denote the $\preceq$-infimum of $\phi[C_n]$.  By Theorem \ref{thm:limits.chains}, $L^p(\Omega)$ is isometrically isomorphic to $L^p[0,1]$ if and only if 
$g_n = \zerovec$ for each $n < \kappa$.  Thus, $J(f,g,h)$ if and only if 
$L^p(\Omega)$ is isometrically isomorphic to $L^p[0,1]$.  
So, we let $H(f)$ if and only if $J(f, \Phidisint(f, u), \Phidecomp(f,\Phidisint(f,u),u))$.  

It now follows from Lemma \ref{lm:Lspace} that the set of all names of presentations of $L^p[0,1]$ is $\Pi_2^0$.  By Lemma \ref{lm:pi02}, it is also $\Pi_2^0$-complete.
\end{proof}

\begin{proof}[Proof of Theorem \ref{thm:known.exp}.\ref{thm:known.exp.lpnLp01}]
Suppose $\psi$ is a disintegration of $L^p(\Omega)$, and suppose $\{C_j\}_{j < \kappa}$ is a decomposition of $S := \dom(\psi)$ into almost norm maximizing chains.  
We claim that $L^p(\Omega)$ is isometrically isomorphic to $\ell^p_n \oplus L^p[0,1]$ if and only if 
the following hold. 
\begin{enumerate}
	\item $\kappa = \omega$.\label{cond1}
	
	\item There exist distinct $j_1, \ldots, j_n \in \N$  and $k \in \N$ so that $\norm{\phi(\nu)}_p > 2^{-k}$ for all $\nu \in C_{j_1} \cup \ldots \cup C_{j_n}$. \label{cond2}
	
	\item For every finite $F \subseteq \N$ and every $k \in \N$, if $\# F > n$, then there exists 
	$\nu \in \bigcup_{j \in F} C_j$ so that $\norm{\psi(\nu)}_p < 2^{-k}$.  \label{cond3}
\end{enumerate}
Suppose $L^p(\Omega)$ is isometrically isomorphic to $\ell^p_n \oplus L^p[0,1]$.  Thus, by Theorem \ref{thm:classification}, $\Omega$ has exactly $n$ atoms.  Let 
$g_j$ denote the $\preceq$-infimum of $\phi[C_j]$ for each $j < \kappa$.  
By Theorem \ref{thm:limits.chains}, there are exactly $n$ values of $j$ so that 
$g_j \neq \zerovec$; label these values $j_1, \ldots, j_n$.  Condition (\ref{cond2}) now follows. 

By way of contradiction, suppose $\kappa < \omega$.  Let $C_j^-$ denote the downset of $C_j$; i.e. $\nu \in C_j^-$ if and only if $\nu \subseteq \nu'$ for some $\nu' \in C_j$.  Then, by Definition \ref{def:anm.chain}, each $C_j^-$ is a branch of $S$.  
Since $\psi$ is injective and summative, each node of $S$ that has a child in $S$ has at least two children in $S$.  
Since $\kappa < \omega$, $S$ has a finite number of branches and so 
$S$ is finite.  Thus, 
each $g_j$ is the $\preceq$-minimal element of $\phi[C_j]$.  
Therefore, since $\psi$ is summative, $\psi(\emptyset) = \sum_{j < \kappa} g_j = \sum_{s = 1}^n g_{j_s}$.  
It follows that $L^p(\Omega)$ is isometrically isomorphic to $\ell^p_n$- a contradiction.  Therefore, 
$\kappa = \omega$.  

Suppose $F \subseteq \N$ is finite and $\# F > n$.  Then, since $\Omega$ has exactly $n$ atoms, by Theorem \ref{thm:limits.chains}, there exists $j \in F$ so that $g_j = \zerovec$.  Hence, (\ref{cond3}).  

Now, suppose (\ref{cond1}) - (\ref{cond3}) hold.  By (\ref{cond2}) and (\ref{cond3}), there are exactly $n$ values of $j$ so that $g_j \neq \zerovec$.  Thus, by Theorem \ref{thm:limits.chains}, $\Omega$ has exactly $n$ atoms.  This means that $L^p(\Omega)$ is isometrically isomorphic to $\ell^p_n$ or $\ell^p_n \oplus_p L^p[0,1]$.  In the former case, $S$ would be finite which is ruled out by (\ref{cond1}).  Hence, 
$L^p(\Omega)$ is isometrically isomorphic to $\ell^p_n \oplus L^p[0,1]$.  

By means of Proposition \ref{prop:interpretation}, it follows that there is a $d$-$\Sigma_2^0$ predicate $P_0$ so that for all $f,g,h \in \N^\N$, if $f$ names 
$L^p(\Omega)^\#$, and if $g$ names a disintegration $\psi$ of $L^p(\Omega)^\#$, and if $h$ names 
a partition $\{C_n\}_{n < \kappa}$ of $\dom(\psi)$ into almost norm-maximizing chains, then 
$P_0(f,g,h)$ if and only if $L^p(\Omega)$, $\psi$, $\{C_n\}_{n < \kappa}$ satisfy (\ref{cond1}) - (\ref{cond3}).  
Let $P_1(f)$ if and only if $P_0(f, \Phidisint(f,r), \Phidecomp(f,\Phidisint(f,r),r))$ and let $P(f)$ if and only if 
$\Lspace(f,r) \wedge P_1(f)$, where $r$ is a computable name of $p$.  Thus, $P$ is $d$-$\Sigma_2^0$ and has all required properties.  

To see completeness, suppose $P$ is a $d$-$\Sigma_2^0$ predicate.  Let $Q$,$R$ be 
$\Sigma_2^0$ predicates so that $P = Q \wedge \neg R$.  We can assume $R \Rightarrow Q$.  
By Corollary \ref{cor:pi02}, for each $X \in \{Q, R\}$, there is a computable operator 
$F_X : \N^\N \rightarrow \N^\N$ so that for all $f \in \N^\N$, $F_X$ names a Banach space presentation 
$\B_f^\#$ so that 
\[
\B_f = \left\{ 
\begin{array}{cc}
\ell^p_n \oplus L^p[0,1] & \mbox{if $X(f)$}\\
L^p[0,1] & \mbox{otherwise}.\\
\end{array}
\right.
\]
Let $G : (\N^\N)^2 \rightarrow \N^\N$ be a computable operator so that whenever $f,g$ name Banach space presentations $\B_0^\#$, $\B_1^\#$, $G(f,g)$ names $(\B_0 \oplus_p \B_1)^\#$.  
Let $F(f) = G(F_Q(f), F_R(f))$.  Thus, $F(f)$ names a presentation of $\ell^p_n \oplus L^p[0,1]$ if and only if $P(f)$.
\end{proof}

\begin{proof}[Proof of Theorem \ref{thm:known.exp}.\ref{thm:known.exp.lpLp01}]
Suppose $\psi$ is a disintegration of $L^p(\Omega)$, and let $\{C_n\}_{n < \kappa}$ be a partition of $S: = \dom(\psi)$ into almost norm-maximizing chains.  

We begin by showing that for every $n_0 < \kappa$, there exist pairwise incomparable 
$\nu_0, \ldots, \nu_{n_0}$ so that $\nu_j \in C_j$ for each $j$.  For, let 
$m_j = \min\{|\nu|\ :\ \nu \in C_j\}$ for each $j$, and let $M = \max\{m_0, \ldots, m_{n_0}\}$.  
Hence, if $C_j$ does not have a node of length $M$, then $C_j$ is bounded.
Thus, by Definition \ref{def:anm.chain}, if $C_j$ does not have a node of length $M$, then 
$C_j$ contains a terminal node of $S$.  For each $j$ so that $C_j$ contains a node of length $M$, 
let $\nu_j$ be such a node.  For all other $j \leq n_0$, let $\nu_j$ denote the terminal node of $C_j$. 
Thus, $|\nu_j| \leq M$ for each $j \leq n_0$, and $\nu_j$ is terminal if $|\nu_j| < M$.  
Therefore, $\nu_0, \ldots, \nu_{n_0}$ are incomparable.

Let $g_n$ denote the $\preceq$-infimum of $\psi[C_n]$.

We claim $L^p(\Omega)$ is isometrically 
isomorphic to $\ell^p \oplus L^p[0,1]$ if and only if both of the following hold.
\begin{enumerate}
	\item For every $m \in \N$, there exist $n,k \in \N$ with $m < n < \kappa$ such that for all 
	$\nu \in C_n$  $\norm{\psi(\nu)}_p > 2^{-k}$.  \label{thm:lp.plus.Lp01::cond.1}
	
	\item There exists $k \in \N$ so that for all $m < \kappa$ there exists pairwise incomparable $\nu_0, \ldots, \nu_m$ so that 
$\nu_n \in C_n$ for each $n \leq m$ and $\norm{\psi(\emptyset) - \sum_{n \leq m} g_n}_p > 2^{-k}$.  \label{thm:lp.plus.Lp01::cond.2}
\end{enumerate}

For, suppose $L^p(\Omega)$ is isometrically isomorphic to $\ell^p \oplus L^p[0,1]$.  
Thus, $\Omega$ has $\aleph_0$ atoms.  So, by Theorem \ref{thm:limits.chains}, there are 
infinitely many $n$ so that $g_n \neq \zerovec$.  Since 
$g_n \preceq \phi(\nu)$ for all $\nu \in C_n$, (\ref{thm:lp.plus.Lp01::cond.1}) follows.  It also follows that 
$\kappa = \omega$.  

Since $L^p(\Omega)$ is not isometrically isomorphic to $\ell^p$, 
$\psi(\emptyset) \neq \sum_n g_n$.  Choose $k \in \N$ so that 
$2^{-k} < \norm{\psi(\emptyset) - \sum_n g_n}_p$.  Since $g_0, g_1, \ldots$ are disjointly supported
components of $\psi(\emptyset)$, $\norm{\psi(\emptyset) - \sum_n g_n}_p^p = 
\norm{\psi(\emptyset)}_p^p - \sum_n \norm{g_n}_p^p$.  Therefore, 
$\norm{\psi(\emptyset)}_p^p - \sum_{n = 0}^m \norm{g_n}_p^p > 2^{-kp}$ for all $m \in \N$.  
So, let $m \in \N$.  Then by Proposition \ref{prop:subvectorLimitsExist} there exist $\nu_0, \ldots, \nu_m$ so that $\nu_n \in C_n$ for each $n \leq m$ and so that 
$\norm{\psi(\emptyset)}_p^p - \sum_{n = 0}^m \norm{\psi(\nu_n)}_p^p > 2^{-kp}$.  Hence, 
$\norm{\psi(\emptyset) - \sum_{n = 0}^m \psi(\nu_n)}_p > 2^{-k}$.  
Thus, (\ref{thm:lp.plus.Lp01::cond.2}).

Conversely, suppose (\ref{thm:lp.plus.Lp01::cond.1}) and (\ref{thm:lp.plus.Lp01::cond.2}) hold.  
Hence, $\kappa = \omega$, and there are infinitely many values of $n$ so that 
$g_n \neq \zerovec$.  So by Theorem \ref{thm:limits.chains}, $\Omega$ has $\aleph_0$ atoms. 
Thus, by Theorem \ref{thm:classification}, $L^p(\Omega)$ is either $\ell^p$ or $\ell^p \oplus L^p[0,1]$.  
Let $k \in \N$ be as given by (\ref{thm:lp.plus.Lp01::cond.2}).  We claim that 
$\norm{\psi(\emptyset) - \sum_{n \leq m} g_n}_p > 2^{-k}$ for all $m \in \N$.  
For, let $m \in \N$.  Then, there exist pairwise incomparable $\nu_0, \ldots, \nu_m$ so that 
$\nu_j \in C_j$ for each $j$ and $\norm{\psi(\emptyset) - \sum_{n = 0}^m \psi(\nu_n)}_p > 2^{-k}$.  
Thus, $\norm{\psi(\emptyset)}_p^p - \sum_{n = 0}^m \norm{\psi(\nu_n)}_p^p > 2^{-kp}$.  
Therefore, for all $\nu_0', \ldots, \nu_m'$, if $\nu_j \subseteq \nu_j' \in C_j$ for each $j$, then 
$\norm{\psi(\emptyset)}_p^p - \sum_{n = 0}^m \norm{\psi(\nu_n')}_p^p > 2^{-kp}$.  
Hence, $\norm{\psi(\emptyset)}_p^p - \sum_{n \leq m} \norm{g_n}_p^p > 2^{-kp}$, and so 
$\norm{\psi(\emptyset) - \sum_{n \leq m} g_n}_p > 2^{-k}$.

Thus, $\psi(\emptyset) \neq \sum_n g_n$.  Hence, the support of 
$\psi(\emptyset) - \sum_n g_n$ has positive measure and includes no atom of $\Omega$.  
Therefore, $\Omega$ is not purely atomic, and so $L^p[0,1]$ isometrically embeds into 
$L^p(\Omega)$.  Thus, by Theorem \ref{thm:classification}, $L^p(\Omega)$ is isometrically isomorphic to $\ell^p \oplus L^p[0,1]$. 

To demonstrate completeness, let $P \subseteq \N^\N$ be a $d$-$\Sigma_3^0$ predicate. 
Let $Q$,$R$ be $\Sigma_3^0$ predicates so that $P = Q \wedge \neg R$.  Without loss of generality, 
assume $R \Rightarrow Q$.  By Corollary \ref{cor:sigma.03}, for each $X \in \{Q,R\}$, there is a computable operator $F_X : \N^\N \rightarrow \N^\N$ so that for every $f \in \N^\N$ there is an $n\in \N$ and a Banach space presentation $\B_f^\#$ so that $F(f)$ names $\B_f^\#$ and 
\[
\B_f = \left\{\begin{array}{cc}
\ell^p_n \oplus L^p[0,1] & \mbox{if $X(f)$} \\
\ell^p& \mbox{otherwise.}\\
\end{array}
\right.
\]
There is a computable operator $H: (\N^\N)^2 \rightarrow \N^\N$ so that 
whenever $f,g$ name Banach space presentations, $H(f,g)$ names the $L^p$-sum of these presentations.  Let $F(f) = H(F_Q(f), F_R(f))$.  If $P(f)$, then $F(f)$ names 
a presentation of $\ell^p \oplus L^p[0,1]$.  If $R(f)$, then $F(f)$ names a presentation of 
$\ell^p_n \oplus L^p[0,1]$ for some $n \in \N$.  If $\neg Q(f)$, then $F(f)$ names a presentation 
of $\ell^p$.  Hence, $P(f)$ if and only if $F(f)$ names a presentation of 
$\ell^p \oplus L^p[0,1]$.
\end{proof}

\section{The index set of all computable Lebesgue space presentations}\label{sec:index.Lebesgue}

We now prove Main Theorem \ref{mthm:Lebesgue} which we now restate in name form as follows.

\begin{theorem}\label{thm:pres.Lebesgue}
The set of all names of Lebesgue space presentations is $\Pi_3^0$.
\end{theorem}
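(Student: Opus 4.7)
The plan is to produce a $\emptyset'$-computable operator $\Phi : \N^\N \to \N^\N$ that, whenever $f$ names an $L^p$-space presentation with $p \geq 1$, outputs a name of $p$. Granted this, membership in the target set is equivalent to ``$\Phi(f)$ names a real $\geq 1$ and $\Lspace(f, \Phi(f))$''. Both conjuncts are $\Pi_2^0$ in their inputs, and since $\Phi(f)$ is uniformly computable from $f \oplus \emptyset'$, the whole condition is $\Pi_2^0(\emptyset') = \Pi_3^0$. Forward correctness is by construction of $\Phi$; the converse direction uses Lemma \ref{lm:Lspace} to conclude that $f$ then names an $L^{\Phi(f)}$-space, hence a Lebesgue space.

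To build $\Phi$, first branch on $\Hilbert(f)$ (a $\Pi_1^0$ condition by Lemma \ref{lm:hilbert}): in this case output the name of $2$, which is correct since Hilbert captures exactly the separable $L^2$-spaces, finite-dimensional included. For non-Hilbert $f$, if $f$ is to name a Lebesgue space then by Theorem \ref{thm:classification} we have $\dim \B \geq 2$ and $p \neq 2$, so Proposition \ref{prop:hanner.extended} gives $\delta_\B(\epsilon) = \delta(p, \epsilon)$ on $(0, 2]$. The strict-inequality reformulation in Proposition \ref{prop:hanner.extended}(2) makes $\delta_\B(\epsilon)$ upper semicomputable uniformly in $f$ and $\epsilon \in (0,2) \cap \Q$, by searching over rational-vector approximants to unit vectors $u, v$ with $\norm{u - v} > \epsilon$. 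For a Lebesgue $\B$ the resulting upper bounds converge to $\delta(p, \epsilon)$, so $\delta_\B(\epsilon)$ is $\Delta_2^0$-computable from $f$.

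Fix distinct rationals $\epsilon_1, \epsilon_2 \in (0, 2)$. I invert the map $p \mapsto (\delta(p, \epsilon_1), \delta(p, \epsilon_2))$ to recover $p$. By Proposition \ref{prop:delta.inc.dec}, $\delta(\cdot, \epsilon_i)$ is strictly monotone on each of $(1, 2]$ and $[2, \infty)$; extending by continuity $\delta(1, \epsilon) = 0$ (consistent with $L^1$ failing uniform convexity at every $\epsilon < 2$), a single value $\delta(p, \epsilon_1)$ leaves at most two candidates for $p$, and $\delta(p, \epsilon_2)$ selects among them. Performing this inversion stagewise on the upper-semicomputable approximants of $\delta_\B(\epsilon_i)$ produces a $\Delta_2^0$-name of $p$, with the case $p = 1$ emerging organically when both $\delta_\B(\epsilon_i)$ approximate zero. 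The main technical obstacle is checking that this inversion is unambiguous and continuous so that the $\Delta_2^0$-limit yields the correct $p$; both facts follow from Proposition \ref{prop:delta.inc.dec} together with an elementary analysis of the explicit Hanner formulae in Definition \ref{def:delta.func}, but the book-keeping is where real care is needed.
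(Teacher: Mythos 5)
Your overall strategy---recover the exponent at the $\Delta_2^0$ level from the modulus of uniform convexity via Hanner's formula and then feed the result into $\Lspace$, counting $\Pi_2^0(\emptyset')=\Pi_3^0$---is exactly the strategy of the paper (Lemma \ref{lm:comp.exp} followed by the disjunction $\Lspace(f,F_0(f))\vee\Lspace(f,F_1(f))$). The gap is in the step where you invert $p\mapsto(\delta(p,\epsilon_1),\delta(p,\epsilon_2))$. First, injectivity of this two-point map across the two monotone branches does not follow from Proposition \ref{prop:delta.inc.dec}, which only gives strict monotonicity separately on $(1,2]$ and on $[2,\infty)$; that a second sample point $\epsilon_2$ always separates the two candidate preimages produced by $\epsilon_1$ is a nontrivial claim about the Hanner formulae that you would have to prove. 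Second, and more fatally, even granting injectivity the inverse is not continuous where you need it: for fixed $\epsilon\in(0,2)$ one has $\delta(p,\epsilon)\to 0$ both as $p\to 1^+$ and as $p\to\infty$ (for the latter, $(\epsilon/2)^p\to 0$ in Definition \ref{def:delta.func}), so every neighborhood of $(0,0)$ in the range contains image points whose preimages are near $1$ and image points whose preimages are arbitrarily large. Consequently no operator working from approximations to $(\delta_\B(\epsilon_1),\delta_\B(\epsilon_2))$---even a $\emptyset'$-computable one, which still sees only finitely many approximations before committing to each digit of its output---can converge to a name of the exponent when $p=1$: at no finite stage can it rule out that $p$ is enormous. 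The case you describe as ``$p=1$ emerging organically when both values approximate zero'' is precisely where the construction breaks.

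The paper sidesteps both problems by never deciding which side of $2$ the exponent lies on and by never inverting $\delta(\cdot,\epsilon)$ as a two-sided function. Instead, Lemma \ref{lm:comp.exp} enumerates a one-sided Dedekind cut of $p$ under each monotonicity hypothesis separately: upper semicomputability of $\delta_\B(1)$ (via Proposition \ref{prop:hanner.extended}) together with Proposition \ref{prop:delta.inc.dec} makes $\{r\in(1,2)\cap\Q:\delta(p,1)<\delta(r,1)\}$ c.e.\ in $f$, and padding with $(2,\infty)\cap\Q$ yields the right cut of $p$ whenever $p\le 2$ (this is correct even at $p=1$, where the right cut is everything); symmetrically one gets the left cut when $p\ge 2$. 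A one-sided cut determines the real and, with $\emptyset'$, yields a name of it, so one obtains two $\Delta_2^0$ candidate exponents, one of which is correct whenever $f$ names a Lebesgue space; the disjunction $\Lspace(f,F_0(f))\vee\Lspace(f,F_1(f))$ then lets Lemma \ref{lm:Lspace} discard the wrong candidate. If you replace your inversion step with this two-candidate device, the rest of your argument goes through.
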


The proof of Theorem \ref{thm:pres.Lebesgue} comes down to the following.

\begin{lemma}\label{lm:comp.exp}
There exist computable operators $F_0 : \N^\N \rightarrow \Q^\N$ and $F_1 : \N^\N \rightarrow \Q^\N$ so that whenever $f \in \N^\N$ names a presentation of a Lebesgue space $\B$ of dimension at least 2, 
$F_0(f)$ enumerates the right Dedekind cut of the exponent of $\B$ if this exponent is at most $2$, and  $F_1(f)$ enumerates the left Dedekind cut of the exponent of $\B$ if this exponent is at least $2$. 
\end{lemma}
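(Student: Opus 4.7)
The plan is to extract the exponent $p$ from a Lebesgue space presentation by comparing the modulus of uniform convexity of $\B$, which is enumerable from above given the presentation, against the explicit function $\delta(\cdot,\epsilon)$ supplied by Hanner's formula. Proposition \ref{prop:hanner.extended} identifies $\delta_\B(\epsilon)$ with $\delta(p,\epsilon)$ for any separable $L^p$ space $\B$ of dimension at least $2$, and Proposition \ref{prop:delta.inc.dec} gives strict monotonicity: $\delta(\cdot,\epsilon)$ is strictly increasing on $(1,2]$ and strictly decreasing on $[2,\infty)$. Hence any rational upper bound on $\delta_\B(\epsilon)$ together with a rational lower bound on $\delta(q,\epsilon)$ that strictly exceeds it certifies $q > p$ (when $q \in (1,2]$) or $q < p$ (when $q \geq 2$), according to the regime of $q$.

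First I would build a computable operator that, from $f$ naming a Banach space presentation $\B^\#$, enumerates pairs $(\alpha,\epsilon)$ of positive rationals with $\epsilon < 2$ witnessing $\delta_\B(\epsilon) < \alpha$. The search enumerates rational vectors $u_0, v_0$ of $\B^\#$ and checks, via the diagram of $\B^\#$, that $\norm{u_0}, \norm{v_0}$ are within a small tolerance of $1$ and that $\norm{u_0 - v_0} > \epsilon$ and $1 - \frac{1}{2}\norm{u_0 + v_0} < \alpha$; each such check is $\Sigma_1^0$. The strict-inequality refinement in Proposition \ref{prop:hanner.extended}(2) ensures that, when $\B$ is $L^p$ of dimension at least $2$ with $p > 1$, every $\alpha > \delta_\B(\epsilon)$ is realised by genuine unit vectors with $\norm{u - v}_p > \epsilon$ strictly and hence by rational approximations. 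For $p = 1$, any pair of disjointly supported approximate unit vectors realises any $\alpha > 0 = \delta_\B(\epsilon)$.

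Second, by Definition \ref{def:delta.func} the value $\delta(q,\epsilon)$ is a computable real uniformly in rational $q > 1$ and $\epsilon \in (0,2)$: for $q \geq 2$ the formula is explicit, and for $q \in (1,2]$ the value is the unique solution of a strictly monotone equation, approximable by bisection. Extending by $\delta(1,\epsilon) := 0$, I define $F_1(f)$ to enumerate every rational $q$ with either $q < 2$, or $q \geq 2$ together with a witness $(\alpha,\epsilon)$ from the first step and a rational $\beta$ certified to satisfy $\alpha < \beta < \delta(q,\epsilon)$. Symmetrically, $F_0(f)$ enumerates every rational $q > 2$ unconditionally and every $q \in (1,2]$ for which an analogous certificate appears. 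Both conditions are $\Sigma_1^0$ in $f$, and the outputs are the corresponding sequences, padded by a default rational if the search stalls.

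Correctness falls out by cases. If $\B$ is $L^p$ of dimension at least $2$ with $p \geq 2$, then for rational $q$ with $2 \leq q < p$, Proposition \ref{prop:delta.inc.dec}(1) gives $\delta(q,\epsilon) > \delta(p,\epsilon) = \delta_\B(\epsilon)$, so a suitable $(\alpha,\epsilon)$ and intermediate $\beta$ are eventually produced; for $q \geq p$ with $q \geq 2$, $\delta(q,\epsilon) \leq \delta_\B(\epsilon) < \alpha$ on every enumerated $\alpha$ and nothing triggers. The $p \leq 2$ case for $F_0$ is symmetric via Proposition \ref{prop:delta.inc.dec}(2), with $p = 1$ handled by $\delta_\B(\epsilon) = 0 < \delta(q,\epsilon)$ for every rational $q > 1$. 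The main technical hurdle is the first step: invoking the strict-inequality refinement of Hanner's formula precisely so that every $\alpha > \delta_\B(\epsilon)$ really is realised by rational vectors of the presentation, guaranteeing that each required certificate is enumerated in finite time.
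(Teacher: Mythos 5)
Your proposal is correct and follows essentially the same route as the paper: certify $\delta_\B(\epsilon) < \delta(q,\epsilon)$ by searching for rational near-unit vectors $u_0,v_0$ with $\norm{u_0-v_0}$ large and $\norm{u_0+v_0}$ large (relying on the strict-inequality refinement in Proposition \ref{prop:hanner.extended}(2) for completeness of the search), then convert this into $q>p$ or $q<p$ via the monotonicity in Proposition \ref{prop:delta.inc.dec}, and pad each cut with the unconditionally correct half-line of rationals on the far side of $2$. The only differences are cosmetic (the paper fixes $\epsilon=1$ where you let it vary, and you make the $p=1$ case explicit via disjointly supported vectors, which the paper leaves implicit).
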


\begin{proof}
Let $Y(f)$ consist of all $r \in \Q \cap(1, \infty)$ for which there exists a positive rational number $r_0$
and terms $\tau_0$, $\tau_1$ of $\Lpres$ so that the following hold.
\begin{enumerate}
	\item $f \models  1 < \norm{\tau_j} < 1 + r_0$
	
	\item $f \models \norm{\tau_0 - \tau_1} > 1 + 2r_0$.
	
	\item $f \models r' + 2r_0 < \norm{\tau_0 + \tau_1}$ for some rational number $r'$ so that 
	$r' > 2(1 - \delta(r,1))$.  
\end{enumerate}

Let:
\begin{eqnarray*}
X_0(f) & = & Y(f) \cap (1,2) \cup (2, \infty) \cap \Q\\
X_1(f) & = & Y(f) \cap (2, \infty) \cup (1, \infty) \cap \Q.
\end{eqnarray*}
Since the function $\delta$ is computable, $X_j(f)$ is $f$-c.e. uniformly in $f$.  Thus, 
there is a computable operator $F_j : \N^\N \rightarrow \Q^\N$ so that $F_j(f)$ enumerates 
$X_j(f)$ for each $f$.

Let $p \leq 2$.  Suppose $f$ names $L^p(\Omega)^\#$ and $\dim(L^p(\Omega)) \geq 2$.  
We show $X_0(f)$ is the right Dedekind cut of $p$.  To begin, suppose $r \in X_0(f)$.
Without loss of generality, suppose $r \leq 2$.  Then, by the definition of $Y(f)$, there exists a positive number $r_0$ and rational
vectors $u_0$, $u_1$ of $L^p(\Omega)^\#$ so that 
$1 < \norm{u_j}_p < 1+ r_0$, $\norm{u_0 - u_1}_p >1 + 2r_0$, and 
$2(1 - \delta(r,1)) < \norm{u_0 + u_1} - 2r_0$.  Since $\norm{u_j}_p - r_0 < 1$, 
$B(u_j;r_0)$ contains a unit vector $v_j$.  Thus, $1 < \norm{u_0 - u_1}_p - 2r_0 < \norm{v_0 - v_1}_p$. 
Also, $\norm{u_0 + u_1}_p - 2r_0 < \norm{v_0 + v_1}_p$.  Hence, 
$1 - 2^{-1}\norm{v_0 + v_1}_p < \delta(r,1)$.  Thus, by Proposition \ref{prop:hanner.extended}, $\delta(p,1) < \delta(r,1)$.  So, by Proposition \ref{prop:delta.inc.dec}, $p < r$.  

Conversely, suppose $r$ is a rational number and $p < r$.  We show that $r \in X_0(f)$.  
Without loss of generality, suppose $r \leq 2$.  Then, by Proposition \ref{prop:delta.inc.dec}, 
$\delta(p,1) < \delta(r,1)$.  So, by Proposition \ref{prop:hanner.extended}, there exist unit vectors 
$v_0$, $v_1$ of $L^p(\Omega)$ so that $\norm{v_0 - v_1}_p > 1$ and 
$1 - 2^{-1}\norm{v_0 + v_1}_p < \delta(r,1)$.  There is a positive rational number $r_0$ so that 
for all $u_0 \in B(v_0; r_0)$ and all $u_1 \in B(v_1, r_0)$, 
$\norm{u_0 - u_1}_p - 2r_0 > 1$ and 
$2(1 - \delta(r,1)) < \norm{u_0 + u_1}_p - 2r_0$.  Since 
$B(v_j; r_0) - \overline{B(\zerovec; 1)}$ is open, there exist rational $u_j \in B(v_j; r_0)$ so that 
$1 < \norm{u_j}_p < 1 + r_j$.  It follows that $r \in X_0(f)$.  

The case $p \geq 2$ is similar.
\end{proof}

\begin{proof}[Proof of Theorem \ref{thm:pres.Lebesgue}]
By Lemma \ref{lm:comp.exp}, there exist $\Delta_2^0$-operators $F_0 : \N^\N \rightarrow \N^\N$ and 
$F_1 : \N^\N \rightarrow \N^\N$ so that for all $f \in \N^\N$, each $F_j(f)$ names a real in 
$[1,\infty)$, and if $f$ names a presentation of a Lebesgue space, then one of $F_0(f)$, $F_1(f)$ names the exponent of that space.  For all $f \in \N^\N$, let 
\[
P(f) \Leftrightarrow (\Lspace(f, F_0(f))\ \vee\ \Lspace(f, F_1(f))).
\]
By Lemma \ref{lm:Lspace}, $P$ is $\Pi_3^0$ and consists precisely of the names of Lebesgue space presentations.  
\end{proof}

\section{Isometric isomorphism results}\label{sec:isom}

We now turn to the complexity of the isometric isomorphism problem for $L^p$ spaces.  
Namely, we prove Main Theorem \ref{mthm:isom} by proving the following.

\begin{theorem}\label{thm:isom.fixed.exp}
Let $p$ be a computable real so that $p \geq 1$ and so that $p \neq 2$.  Then, the set of all 
$(f,g) \in \N^\N \times \N^\N$ so that $f,g$ name presentations of isometrically isomorphic
$L^p$ spaces is co-$3$-$\Sigma_3^0$ complete.
\end{theorem}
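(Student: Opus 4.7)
The plan is to use Theorem~\ref{thm:classification} to reduce isomorphism of $L^p$ space presentations to comparing two invariants: the number of atoms of the underlying measure space and whether the non-atomic part is nontrivial. Two nonzero $L^p$ space presentations are isometrically isomorphic iff they agree on both invariants, so ``not iso'' decomposes as three pieces---``at least one is not an $L^p$ presentation'', ``atom counts differ'', and ``non-atomic statuses differ''---which I will fit inside $3$-$\Sigma_3^0$. For the matching lower bound I will encode an arbitrary $3$-$\Sigma_3^0$ set into the non-isomorphism pattern of a computably constructed pair of presentations.

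For the upper bound I use $\Phidis(f,r)$ and $\Phidecomp(f,\Phidis(f,r),r)$ (with $r$ a computable name of $p$) to produce a disintegration $\phi$ and chain decomposition $\{C_n\}$. The predicate ``$\B_f$ has at least $k$ atoms'' is then $\Sigma_2^0$: it asserts existence of $k$ distinct chains $C_{n_1},\ldots,C_{n_k}$ and exponents $m_1,\ldots,m_k$ so that $\norm{\phi(\nu)}_p > 2^{-m_i}$ for every $\nu \in C_{n_i}$. Hence ``atom counts differ'' is $\Sigma_3^0$. By Lemma~\ref{lm:Lp01.embed}, ``$L^p[0,1]$ embeds in $\B_f$'' is $\Sigma_3^0$, so ``non-atomic statuses differ'' is a symmetric difference of $\Sigma_3^0$ predicates, hence $d$-$\Sigma_3^0$. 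Combined with the $\Sigma_2^0$ condition that at least one of $f,g$ fails $\Lspace$, the not-iso set has the form $U \cup V$ with $U \in \Sigma_3^0$ and $V = B \setminus C \in d$-$\Sigma_3^0$ ($C \subseteq B$, both $\Sigma_3^0$); setting $A_3 := U$, $A_2 := U \cup C$, $A_1 := U \cup B$ produces nested $\Sigma_3^0$ sets with $A_3 \cup (A_1 \setminus A_2) = U \cup V$, witnessing $3$-$\Sigma_3^0$ membership.

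For the lower bound, given any $3$-$\Sigma_3^0$ set $T = A_3 \cup (A_1 \setminus A_2)$ with $A_3 \subseteq A_2 \subseteq A_1$ all $\Sigma_3^0$, I apply Corollary~\ref{cor:sigma.03} to get operators $F_{A_i}$ each producing a name of $\ell^p_{n_i} \oplus_p L^p[0,1]$ when $A_i(x)$ holds and of $\ell^p$ otherwise. Using the uniform $L^p$-sum operator on presentations and a fixed name $\overline{\ell^p}$ of $\ell^p$, let $f_1(x)$ name the $L^p$-sum of $F_{A_2}(x)$ and $\overline{\ell^p}$, and $f_2(x)$ name the $L^p$-sum of $F_{A_1}(x)$ and $F_{A_3}(x)$. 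Since $\ell^p_n \oplus_p \ell^p \cong \ell^p$, case analysis over the four strata gives: on $A_3$, $f_1$ names $\ell^p \oplus_p L^p[0,1]$ and $f_2$ names $\ell^p_{n_1+n_3} \oplus_p L^p[0,1]$ (not iso); on $A_2 \setminus A_3$, both name $\ell^p \oplus_p L^p[0,1]$ (iso); on $A_1 \setminus A_2$, $f_1$ names $\ell^p$ and $f_2$ names $\ell^p \oplus_p L^p[0,1]$ (not iso); off $A_1$, both name $\ell^p$ (iso). The iso pattern coincides with the complement of $T$. I expect the main obstacle to be this combinatorial design: Corollary~\ref{cor:sigma.03} only provides a two-valued toggle between $\ell^p$ and $\ell^p_n \oplus_p L^p[0,1]$, so producing the three-iso-class behavior needed to make adjacent strata switch between iso and not iso requires the asymmetric pairing above---one factor cut back to two classes by a constant $\ell^p$ summand, the other allowed three classes by accumulating atoms from two $\Sigma_3^0$-driven summands.
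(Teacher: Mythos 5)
Your proof is correct and follows essentially the same route as the paper: the upper bound combines the classification theorem with the $\Sigma_2^0$ atom-counting predicate and the $\Sigma_3^0$ predicate for embeddability of $L^p[0,1]$, and the lower bound encodes a normal form for $3$-$\Sigma_3^0$ sets via Corollary \ref{cor:sigma.03} and $L^p$-sums. The only notable difference is your pairing in the reduction ($F_{A_2}\oplus_p \ell^p$ versus $F_{A_1}\oplus_p F_{A_3}$), which distinguishes the innermost stratum by finite-versus-infinite atom count and thereby avoids the paper's need to arrange distinct finite atom counts $n_{f,0}\neq n_{f,1}$ via the ``furthermore'' clause of Corollary \ref{cor:sigma.03}.
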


This proof is accomplished via the following two lemmas.

\begin{lemma}\label{lm:lwr.known.p}
Let $p \geq 1$ be a computable real so that $p \neq 2$.  Suppose $P \subseteq \N^\N$ is 
$3$-$\Sigma_3^0$.  Then, there exist computable operators $F_0, F_1 : \N^\N \rightarrow \N^\N$ so that for every $f \in \N^\N$, each $F_j(f)$ names an $L^p$ space presentation, and these $L^p$ spaces are isometrically isomorphic if and only if $\neg P$.
\end{lemma}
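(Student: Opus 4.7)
The plan is to decompose $P$ along the Ershov hierarchy and assemble $F_0, F_1$ from several applications of Corollary \ref{cor:sigma.03}, using $L^p$-sums to engineer the correct isomorphism type in each region.

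First, because $P$ is $3$-$\Sigma_3^0$, I may fix $\Sigma_3^0$ sets $Q_3 \subseteq Q_2 \subseteq Q_1$ with
\[
P(f)\ \Leftrightarrow\ (Q_1(f) \wedge \neg Q_2(f))\ \vee\ Q_3(f).
\]
Applying Corollary \ref{cor:sigma.03} separately to each $Q_i$ produces a computable operator $H_i : \N^\N \rightarrow \N^\N$ and numbers $n_i(f) \geq 1$ so that $H_i(f)$ names a presentation of $\ell^p_{n_i(f)} \oplus_p L^p[0,1]$ when $Q_i(f)$ holds, and of $\ell^p$ otherwise. Let $K$ be a fixed computable name of a presentation of $\ell^p$, and let $\Phi : (\N^\N)^2 \rightarrow \N^\N$ be a computable operator which, given names of two Banach space presentations $\B_0^\#, \B_1^\#$, returns a name of $(\B_0 \oplus_p \B_1)^\#$ (such an operator was used already in the proof of Theorem \ref{thm:known.exp}.\ref{thm:known.exp.lpnLp01}). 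Define
\[
F_0(f) = \Phi(H_1(f), H_3(f)), \qquad F_1(f) = \Phi(H_2(f), K).
\]

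The verification reduces to a case analysis on $(Q_1(f), Q_2(f), Q_3(f))$, invoking the isometric identities $\ell^p_n \oplus_p \ell^p \cong \ell^p$ and $L^p[0,1] \oplus_p L^p[0,1] \cong L^p[0,1]$, together with Theorem \ref{thm:classification} to decide each (non)isomorphism. In detail: if $\neg Q_1(f)$, then both $F_j(f)$ name $\ell^p$ (isomorphic); if $Q_1 \wedge \neg Q_2$, then $F_0(f)$ names $\ell^p \oplus_p L^p[0,1]$ while $F_1(f)$ names $\ell^p$ (not isomorphic); if $Q_2 \wedge \neg Q_3$, then both $F_j(f)$ name $\ell^p \oplus_p L^p[0,1]$ (isomorphic); if $Q_3$, then $F_0(f)$ names $\ell^p_{n_1(f)+n_3(f)} \oplus_p L^p[0,1]$ while $F_1(f)$ names $\ell^p \oplus_p L^p[0,1]$, which by Theorem \ref{thm:classification} are not isomorphic (finitely many versus $\aleph_0$ atoms). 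Thus $F_0(f)$ and $F_1(f)$ name isometrically isomorphic $L^p$ spaces precisely in the first and third cases, i.e., precisely when $\neg P(f)$.

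There is no significant obstacle beyond this bookkeeping; the real content has been pushed into Corollary \ref{cor:sigma.03} and Theorem \ref{thm:classification}. The one point requiring care is the identity $(\ell^p_{n_1} \oplus_p L^p[0,1]) \oplus_p (\ell^p_{n_3} \oplus_p L^p[0,1]) \cong \ell^p_{n_1+n_3} \oplus_p L^p[0,1]$ used in the $Q_3$ case, which follows by rearranging the $\oplus_p$ sum and using that $L^p([0,1] \sqcup [0,1])$ is isometrically isomorphic to $L^p[0,1]$.
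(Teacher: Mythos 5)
Your proof is correct and follows the same overall strategy as the paper's: write $P$ in the difference-hierarchy normal form with three nested $\Sigma_3^0$ sets, realize each via Corollary \ref{cor:sigma.03}, and combine the resulting presentations with an $L^p$-sum operator, deciding each case by Theorem \ref{thm:classification}. The one genuine difference is in the wiring. The paper takes $F_0(f)$ and $F_1(f)$ to name $\mathcal{A}_{f,0}\oplus_p\mathcal{A}_{f,2}$ and $\mathcal{A}_{f,1}\oplus_p\mathcal{A}_{f,2}$ (both summands paired with the innermost set), so in the innermost case the two spaces are $\ell^p_{m_0}\oplus_p L^p[0,1]$ and $\ell^p_{m_1}\oplus_p L^p[0,1]$ and the paper must invoke the ``furthermore'' clause of Corollary \ref{cor:sigma.03} to guarantee $m_0\neq m_1$ (e.g., by routing the two atom counts through disjoint computable sets of exponents). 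Your pairing --- $F_0(f)=\Phi(H_1(f),H_3(f))$ versus $F_1(f)=\Phi(H_2(f),K)$ with $K$ a fixed name of $\ell^p$ --- makes the innermost distinction one of finitely many atoms versus $\aleph_0$ atoms, so you never need to control the particular values $n_i(f)$ and can use Corollary \ref{cor:sigma.03} without its refinement. That is a small but real simplification; everything else (the identities $\ell^p_n\oplus_p\ell^p\cong\ell^p$, $L^p[0,1]\oplus_p L^p[0,1]\cong L^p[0,1]$, and the four-way case check) matches the paper's verification.
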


\begin{proof}
The exist $\Sigma_3^0$ predicates $Q_0, Q_1, Q_2 \subseteq \N^\N$ so that 
$P = (Q_0 \wedge \neg Q_1)\ \vee\ Q_2$ and $Q_2 \Rightarrow Q_1 \Rightarrow Q_0$.  
By Corollary \ref{cor:sigma.03}, for each $j$ there is a computable operator $G_j : \N^\N \rightarrow \N^\N$ so that for every $f \in \N^\N$, 
$G_j(f)$ is name a presentation of an $L^p$ space $\mathcal{A}_{f,j}$ so that there is an $n_{f,j} \in \N$ for which 
\[
\mathcal{A}_{f,j} = \left\{
\begin{array}{cc}
\ell^p_{n_{f,j}} \oplus L^p[0,1] & Q_j(f)\\
\ell^p & \neg Q_j(f)\\
\end{array}
\right.
\] 
These operators can be chosen so that if $Q_0(f)$ and $Q_1(f)$, then $n_{f,0} \neq n_{f,1}$.  
Let $H : (\N^\N)^2 \rightarrow \N^\N$ be a computable operator so that for all $f,g \in \N^\N$, if 
$f$ and $g$ name Banach space presentations, then $H(f,g)$ names a presentation of the $L^p$ sum of these Banach spaces.  Finally, let $F_0(f) = H(G_0(f), G_2(f))$ and $F_1(f) = H(G_1(f), G_2(f))$. 

Let $\B_{f,0} = \mathcal{A}_{f,0} \oplus_{L^p} \mathcal{A}_{f,2}$, and let 
$\B_{f,1} = \mathcal{A}_{f,1} \oplus_{L^p} \mathcal{A}_{f,2}$.  Thus, 
$F_j(f)$ names a presentation of $\B_{f,j}$.

Suppose $P(f)$.  We first consider the case where $Q_0(f) \wedge \neg Q_1(f)$ holds. 
Thus, $\neg Q_2(f)$, and so $\mathcal{A}_{f,2} = \ell^p$.  It follows that $\mathcal{A}_{f,0} = \ell_{n_{f,0}}^p \oplus L^p[0,1]$ and $\mathcal{A}_{f,1} = \ell_{n_{f,1}}^p \oplus L^p[0,1]$.  Thus, $\B_{f,0}$ is isometrically 
isomorphic to $\ell^p \oplus L^p[0,1]$ and $\B_{f,1}$ is isometrically isomorphic to $\ell^p$.  
Since $p \neq 2$, these spaces are not isometrically isomorphic to each other.

Now, we consider the case where $Q_2(f)$.  Then, $Q_0(f)$ and $Q_1(f)$.  Let 
$m_j = n_{f,j} + n_{f,2}$.  Thus, $m_0 \neq m_1$.  Also, $\B_{f,j}$ is isometrically isomorphic to 
$\ell^p_{m_j} \oplus L^p[0,1]$.  Thus, since $p \neq 2$, $\B_{f,0}$ and $\B_{f,1}$ are not isometrically isomorphic.  

Now, suppose $\neq P(f)$.  Thus, $\neg Q_2(f)$ and so $\mathcal{A}_{f,2} = \ell^p$.  
Suppose $Q_0(f)$.  Then, $Q_1(f)$.  Thus, for each $j \in \{0,1\}$, $\mathcal{A}_{f,j} = \ell_{n_{f,j}}^p \oplus L^p[0,1]$.  Hence, $\B_{f,0}$ and $\B_{f,1}$ are isometrically isomorphic to $\ell^p \oplus L^p[0,1]$.  
On the other hand, suppose $\neg Q_0(f)$.  Then, $\neg Q_1(f)$.  Therefore, 
$\mathcal{A}_{f,j} = \ell^p$.  Hence, $\B_{f,0}$ and $\B_{f,1}$ are isometrically isomorphic to $\ell^p$.
\end{proof}

\begin{lemma}\label{lm:k.atoms}
Let $p$ be a computable real so that $p \geq 1$ and so that $p \neq 2$.  There is a $\Sigma_2^0$ predicate $P \subseteq \N^\N \times \N$ so that for all $f \in \N^\N$ and all $k \in\N$, if $f$ names a presentation $L^p(\Omega)^\#$, then $P(f;k)$ if and only if $\Omega$ has at least $k$ atoms.
\end{lemma}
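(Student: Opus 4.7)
The plan is to use the uniform computable machinery from Section~\ref{sec:prelim::subsec:CA} to reduce everything to reasoning about a disintegration and its chain decomposition. Let $r$ be a computable name of $p$, and define computable operators by setting $g(f) := \Phidis(f,r)$ and $h(f) := \Phidecomp(f, g(f), r)$. When $f$ names an $L^p$-space presentation $L^p(\Omega)^\#$, Theorem~\ref{thm:disint.comp} guarantees that $g(f)$ names a disintegration $\psi$ of $L^p(\Omega)^\#$, and Theorem~\ref{thm:decomp} guarantees that $h(f)$ names a partition $\{C_n\}_{n<\kappa}$ of $\dom(\psi)$ into almost norm-maximizing chains. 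By Theorem~\ref{thm:limits.chains}, the number of atoms of $\Omega$ is precisely the number of indices $n<\kappa$ for which the $\preceq$-infimum $g_n$ of $\psi[C_n]$ is nonzero.

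Next I will unpack ``$g_n\ne\zerovec$'' in a quantifier-cheap way. By the antitoneness of a disintegration, the sequence $\|\psi(\nu)\|_p$ is monotonically decreasing as $\nu$ traverses $C_n$ in order of length; combined with Proposition~\ref{prop:subvectorLimitsExist} and continuity of the norm, this gives $\|g_n\|_p = \inf_{\nu\in C_n}\|\psi(\nu)\|_p$. Hence $g_n\ne\zerovec$ if and only if there exists $m\in\N$ such that $\|\psi(\nu)\|_p \geq 2^{-m}$ for every $\nu\in C_n$. Consequently, $\Omega$ has at least $k$ atoms if and only if there exist distinct $n_1,\dots,n_k\in\N$ and $m\in\N$ such that
\[
\forall j\le k\ \forall \nu\in\N^{<\N}\ \neg\bigl((f,g(f),h(f))\models \nu\in C_{n_j}\ \wedge\ (f,g(f))\models \norm{\phi_\nu} < 2^{-m}\bigr).
\]
I take this as the definition of $P(f;k)$. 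By Proposition~\ref{prop:interpretation}, the parenthesized conjunction is $\Sigma_1^0$ uniformly in the parameters, its negation is $\Pi_1^0$, the bounded and unbounded universal quantifiers over $j$ and $\nu$ keep the result $\Pi_1^0$, and finally the leading existential block over the integers $n_1,\dots,n_k,m$ places $P$ in $\Sigma_2^0$.

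The main subtlety, and what drives the choice of formula above, is the quantifier count. The naive rendering ``$g_n\ne\zerovec$ iff $\exists m\,\forall\nu\in C_n\ \|\psi(\nu)\|_p > 2^{-m}$'' would push the matrix into $\Pi_2^0$: both ``$\nu\in C_n$'' and the strict inequality ``$\|\psi(\nu)\|_p > 2^{-m}$'' are $\Sigma_1^0$, so their implication is only $\Pi_2^0$ in general. The fix is to weaken the strict inequality to the non-strict one $\|\psi(\nu)\|_p \geq 2^{-m}$, which is $\Pi_1^0$; the equivalence with ``$\inf>0$'' still holds, because a real number is strictly positive precisely when it is bounded below by some $2^{-m}$. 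With this substitution the implication ``$\nu\in C_{n_j}\Rightarrow \|\psi(\nu)\|_p\ge 2^{-m}$'' collapses to $\Pi_1^0$, and no extra alternation is incurred, landing $P$ exactly in $\Sigma_2^0$ as required.
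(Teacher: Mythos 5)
Your overall strategy is exactly the paper's: compose the uniform operators $\Phidis$ and $\Phidecomp$ with a computable name of $p$, invoke Theorem \ref{thm:limits.chains} to identify the atoms of $\Omega$ with the indices $n<\kappa$ for which the $\preceq$-infimum of $\psi[C_n]$ is nonzero, and express ``$\inf_{\nu\in C_n}\norm{\psi(\nu)}_p>0$'' by a $\Pi_1^0$ matrix under an existential number quantifier. Your quantifier bookkeeping (weakening the strict inequality to a non-strict one so that the matrix stays $\Pi_1^0$) is sound and is precisely what the paper's appeal to Proposition \ref{prop:interpretation} implicitly requires.

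There is, however, a genuine gap: your defining formula drops the constraint $n_j<\kappa$ that appears in the paper's equivalence (``there exist distinct $n_1,\dots,n_k<\kappa$ so that $\inf\{\norm{\phi(\nu)}_p:\nu\in\bigcup_j C_{n_j}\}>0$''). A name of the chain decomposition only \emph{positively} enumerates memberships $\nu\in C_n$, so your clause $\forall\nu\,\neg\bigl((f,g(f),h(f))\models\nu\in C_{n_j}\ \wedge\ \cdots\bigr)$ is vacuously true whenever $C_{n_j}$ is empty, i.e.\ whenever $n_j\ge\kappa$. The decomposition produced by Theorem \ref{thm:decomp} has $\kappa<\omega$ exactly when the space is finite-dimensional, and there your predicate misfires: for $\ell^p_2$ with $\psi(\emptyset)=e_0+e_1$, $\psi((0))=e_0$, $\psi((1))=e_1$ and chains $C_0=\{\emptyset,(0)\}$, $C_1=\{(1)\}$, your $P(f;3)$ is witnessed by $n_1=0$, $n_2=1$, $n_3=2$ and $m=1$, even though $\Omega$ has only two atoms; indeed your $P(f;k)$ holds for \emph{every} $k$ on every finite-dimensional input. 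The repair is cheap and does not change the complexity: conjoin, for each $j\le k$, the $\Sigma_1^0$ nonemptiness condition $\exists\nu\,(f,g(f),h(f))\models\nu\in C_{n_j}$. The matrix then becomes a finite Boolean combination of $\Sigma_1^0$ and $\Pi_1^0$ conditions, and the leading existential block over $n_1,\dots,n_k,m$ still places $P$ in $\Sigma_2^0$.
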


\begin{proof}
The lower bound follows from Lemma \ref{lm:lwr.known.p}.  
Suppose $\psi$ is a disintegration of $L^p(\Omega)$, and suppose $\{C_n\}_{n < \kappa}$ is a 
partition of $\dom(\psi)$ into almost norm-maximizing chains.  It follows from Theorem \ref{thm:limits.chains} that $\Omega$ has at least $k$ atoms if and only if there exist distinct 
$n_1, \ldots, n_k < \kappa$ so that $\inf\{\norm{\phi(\nu)}_p\ :\ \nu \in \bigcup_j C_{n_j}\} > 0$.
By means of the technology described in Proposition \ref{prop:interpretation}, it follows that there is 
a $\Sigma_2^0$ predicate $P_0$ so that for all $f,g,h \in \N^\N$, if $f$ names a presentation $L^p(\Omega)^\#$, and if $g$ names a disintegration $\psi$ of $L^p(\Omega)^\#$, and if 
$h$ names a partition $\{C_n\}_{n < \kappa}$ of $\dom(\psi)$ into almost norm-maximizing chains, 
then $P_0(f,g,h)$ if and only if $\Omega$ has at least $k$ atoms.  
The existence of the predicate $P$ now follows from the uniformity of Theorems 
\ref{thm:disint} and \ref{thm:decomp}.
\end{proof}

\begin{proof}[Proof of Theorem \ref{thm:isom.fixed.exp}]
Suppose $\Omega_0$, $\Omega_1$ are separable measure spaces.  It follows from Theorem 
\ref{thm:classification} that $L^p(\Omega_0)$ and $L^p(\Omega_1)$ are isometrically isomorphic
if and only if both of the following hold.
\begin{enumerate}
	\item $L^p[0,1]$ isometrically embeds into both $L^p(\Omega_0)$ and $L^p(\Omega_1)$ or into neither of these spaces.  
	
	\item $\Omega_0$ and $\Omega_1$ have the same number of atoms.  
\end{enumerate}
It follows from Lemma \ref{lm:Lp01.embed} that there is a co-$2$-$\Sigma_3^0$ predicate $Q \subseteq \N^\N \times \N^\N$ so that for all $f,g \in \N^\N$, if $f$ and $g$ name presentations of $L^p$ spaces, then 
$Q(f,g)$ if and only if $L^p[0,1]$ isometrically embeds into both of these spaces or into neither of these spaces.  

Let $R_1 \subseteq \N^\N \times \N$ be a $\Sigma_2^0$ predicate as given by Lemma \ref{lm:k.atoms}.  
Let $R(f,g;k)$. hold if and only if for all $k \in \N$ 
\[
(R_1(f;k) \wedge R_1(g;k)) \vee (\neg R_1(f;k) \wedge \neg R_1(g;k)).
\]
Thus, $R$ is $\Pi_3^0$.  If $f,g$ name $L^p(\Omega_0)^\#$ and $L^p(\Omega_1)^\#$ respectively, then $R(f,g)$ if and only if $\Omega_0$ and $\Omega_1$ have the same number of atoms.  

Let $h$ be a computable name of $p$, and let $L(f,g)$ hold if and only if 
$\Lspace(f,h)$ and $\Lspace(g,h)$.  Thus, by Lemma \ref{lm:Lspace}, $L$ is $\Pi_2^0$.  

Finally, let $P = Q \wedge R \wedge L$.  Thus, $\neg P$ is 3-$\Sigma_3^0$, and 
$P(f,g)$ holds if and only if $f$ and $g$ name presentations of isometrically isomorphic $L^p$ spaces.
\end{proof}

\section{Conclusion}\label{sec:conc}

Our goal has been to use computability theory to gauge the complexity of 
classifying various kinds of Lebesgue spaces and associated isometric isomorphism problems.  
Our results have placed the complexity of these problems in the arithmetical hierarchy and the relativized Ershov hierarchy, and for the most part our analysis has been exact.  For reasons discussed in the introduction, we leave open whether the bound in Main Theorem \ref{mthm:Lebesgue} is sharp, and we believe resolution of this question will require a significant advance in the technology available for building Banach space presentations.  One of our contributions to this technology is a computable functor that, roughly speaking, transforms a linear order $L$ into an $L^p$ space $B(L)$ in such a way that a significant amount of information about a linear order is reflected in the structure of the corresponding $L^p$ space. Our analysis of the functor itself was not optimal; we limited ourselves only to establishing those properties sufficient to prove our theorems. A more detailed study of the functor is left as an open problem.


\begin{thebibliography}{10}

\bibitem{AshKn}
C.~Ash and J.~Knight, \emph{Computable structures and the hyperarithmetical
  hierarchy}, Studies in Logic and the Foundations of Mathematics, vol. 144,
  North-Holland Publishing Co., Amsterdam, 2000.

\bibitem{Brattka.Hertling.ea:08}
V.~Brattka, P.~Hertling, and K.~Weihrauch, \emph{A tutorial on computable
  analysis}, New computational paradigms, Springer, New York, 2008,
  pp.~425--491.

\bibitem{Brown.McNicholl.2018}
T.~Brown and T.~H. McNicholl, \emph{Analytic computable structure theory and
  $l^p$-spaces part 2}, Submitted. Preprint available at
  http://arxiv.org/abs/1801.00355, 2018.

\bibitem{CalHarKnMi2006}
W.~Calvert, V.~Harizanov, J.~Knight, and S.~Miller, \emph{Index sets of
  computable models}, Algebra Logika \textbf{45} (2006), no.~5, 538--574,
  631--632. \MR{2307694}

\bibitem{CHKLMMQSW12}
J.~Carson, V.~Harizanov, J.~Knight, K.~Lange, C.~McCoy, A.~Morozov, S.~Quinn,
  C.~Safranski, and J.~Wallbaum, \emph{Describing free groups}, Trans. Amer.
  Math. Soc. \textbf{364} (2012), no.~11, 5715--5728. \MR{2946928}

\bibitem{Cembranos.Mendoza.1997}
Pilar Cembranos and Jos{{\'e}} Mendoza, \emph{Banach spaces of vector-valued
  functions}, Lecture Notes in Mathematics, vol. 1676, Springer-Verlag, Berlin,
  1997. \MR{1489231}

\bibitem{CRindex}
Douglas Cenzer and Jeffrey~B. Remmel, \emph{Index sets in computable analysis},
  Theoret. Comput. Sci. \textbf{219} (1999), no.~1-2, 111--150, Computability
  and complexity in analysis (Castle Dagstuhl, 1997).

\bibitem{Clanin.McNicholl.Stull.2019}
Joe Clanin, Timothy~H. McNicholl, and Don~M. Stull, \emph{Analytic computable
  structure theory and {$L^p$} spaces}, Fund. Math. \textbf{244} (2019), no.~3,
  255--285.

\bibitem{DoMel}
R.~Downey and A.~Melnikov, \emph{Effectively categorical abelian groups}, J.
  Algebra \textbf{373} (2013), 223--248.

\bibitem{DoMo}
R.~Downey and A.~Montalb{\'a}n, \emph{The isomorphism problem for torsion-free
  abelian groups is analytic complete}, J. Algebra \textbf{320} (2008), no.~6,
  2291--2300. \MR{2437501 (2009e:20122)}

\bibitem{DoMel1}
Rodney Downey and Alexander~G. Melnikov, \emph{Computable completely
  decomposable groups}, Trans. Amer. Math. Soc. \textbf{366} (2014), no.~8,
  4243--4266.

\bibitem{friedberg}
Rodney~G. Downey, Alexander~G. Melnikov, and Keng~Meng Ng, \emph{A {F}riedberg
  enumeration of equivalence structures}, J. Math. Log. \textbf{17} (2017),
  no.~2, 1750008, 28.

\bibitem{ErGon}
Y.~Ershov and S.~Goncharov, \emph{Constructive models}, Siberian School of
  Algebra and Logic, Consultants Bureau, New York, 2000.

\bibitem{Fu}
L.~Fuchs, \emph{Infinite abelian groups. {V}ol. {I}}, Pure and Applied
  Mathematics, Vol. 36, Academic Press, New York, 1970.

\bibitem{Fu1}
\bysame, \emph{Infinite abelian groups. {V}ol. {II}}, Academic Press, New York,
  1973, Pure and Applied Mathematics. Vol. 36-II.

\bibitem{Gon}
S.~Goncharov, \emph{Countable {B}oolean algebras and decidability}, Siberian
  School of Algebra and Logic, Consultants Bureau, New York, 1997.

\bibitem{GonKni}
S.~Goncharov and J.~Knight, \emph{Computable structure and antistructure
  theorems}, Algebra Logika \textbf{41} (2002), no.~6, 639--681, 757.

\bibitem{GrTurWeFree}
Noam Greenberg, Dan Turetsky, and Linda~Brown Westrick, \emph{Finding bases of
  uncountable free abelian groups is usually difficult}, Trans. Amer. Math.
  Soc. \textbf{370} (2018), no.~6, 4483--4508. \MR{3811535}

\bibitem{Halmos.1950}
Paul~R. Halmos, \emph{Measure {T}heory}, D. Van Nostrand Company, Inc., New
  York, N. Y., 1950.

\bibitem{Hanner.1956}
Olof Hanner, \emph{On the uniform convexity of {$L^p$} and {$l^p$}}, Ark. Mat.
  \textbf{3} (1956), 239--244.

\bibitem{Kakutani.1941}
Shizuo Kakutani, \emph{Concrete representation of abstract {$(L)$}-spaces and
  the mean ergodic theorem}, Ann. of Math. (2) \textbf{42} (1941), 523--537.

\bibitem{ma61}
A.~Mal{\cprime}cev, \emph{Constructive algebras. {I}}, Uspehi Mat. Nauk
  \textbf{16} (1961), no.~3 (99), 3--60. \MR{0151377 (27 \#1362)}

\bibitem{McCoyWallbaum12}
Charles McCoy and John Wallbaum, \emph{Describing free groups, {P}art {II}:
  {$\Pi^0_4$} hardness and no {$\Sigma_2^0$} basis}, Trans. Amer. Math. Soc.
  \textbf{364} (2012), no.~11, 5729--5734.

\bibitem{McNicholl.Stull.2019}
T.H. McNicholl and D.~M. Stull, \emph{The isometry degree of a computable copy
  of $\ell^p$}, To appear in Computability. Available online at
  https://content.iospress.com/articles/computability/com180214., 2019.

\bibitem{McNellp}
Timothy~H. McNicholl, \emph{Computable copies of {$\ell^p$}}, Computability
  \textbf{6} (2017), no.~4, 391--408.

\bibitem{Pontr}
Alexander Melnikov, \emph{Computable topological groups and {P}ontryagin
  duality}, Trans. Amer. Math. Soc. \textbf{370} (2018), no.~12, 8709--8737.

\bibitem{MeMo}
Alexander Melnikov and Antonio Montalb\'{a}n, \emph{Computable {P}olish group
  actions}, J. Symb. Log. \textbf{83} (2018), no.~2, 443--460.

\bibitem{MelIso}
Alexander~G. Melnikov, \emph{Computably isometric spaces}, J. Symbolic Logic
  \textbf{78} (2013), no.~4, 1055--1085.

\bibitem{MelNg}
Alexander~G. Melnikov and Keng~Meng Ng, \emph{Computable structures and
  operations on the space of continuous functions}, Fund. Math. \textbf{233}
  (2016), no.~2, 101--141.

\bibitem{CompComp}
Alexander~G. Melnikov and Andr{\'e} Nies, \emph{The classification problem for
  compact computable metric spaces}, The nature of computation, Lecture Notes
  in Comput. Sci., vol. 7921, Springer, Heidelberg, 2013, pp.~320--328.

\bibitem{NSlocal}
Andr{\'{e}} Nies and Slawomir Solecki, \emph{Local compactness for computable
  polish metric spaces is {$\Pi_1^1$} -complete}, Evolving Computability - 11th
  Conference on Computability in Europe, CiE 2015, Bucharest, Romania, June 29
  - July 3, 2015. Proceedings, 2015, pp.~286--290.

\bibitem{PourElRich}
Marian~B. Pour-El and J.~Ian Richards, \emph{Computability in analysis and
  physics}, Perspectives in Mathematical Logic, Springer-Verlag, Berlin, 1989.

\bibitem{abR}
M.~Rabin, \emph{Computable algebra, general theory and theory of computable
  fields.}, Trans. Amer. Math. Soc. \textbf{95} (1960), 341--360.

\bibitem{riggs}
Kyle Riggs, \emph{The decomposability problem for torsion-free abelian groups
  is analytic-complete}, Proc. Amer. Math. Soc. \textbf{143} (2015), no.~8,
  3631--3640.

\bibitem{rogers}
H.~Rogers, \emph{Theory of recursive functions and effective computability},
  second ed., MIT Press, Cambridge, MA, 1987.

\bibitem{RogersAbel}
L.~Rogers, \emph{Ulm's theorem for partially ordered structures related to
  simply presented abelian {$p$}-groups}, Trans. Amer. Math. Soc. \textbf{227}
  (1977), 333--343.

\bibitem{Soa}
R.~Soare, \emph{Recursively enumerable sets and degrees}, Perspectives in
  Mathematical Logic, Springer-Verlag, Berlin, 1987, A study of computable
  functions and computably generated sets.

\bibitem{Turing:36}
Alan~M. Turing, \emph{On computable numbers, with an application to the
  entscheidungsproblem}, Proceedings of the London Mathematical Society
  \textbf{42} (1936), 230--265.

\bibitem{Turing:37}
\bysame, \emph{On {C}omputable {N}umbers, with an {A}pplication to the
  {E}ntscheidungsproblem. {A} {C}orrection}, Proceedings of the London
  Mathematical Society \textbf{43} (1937), 544--546.

\bibitem{Wei00}
Klaus Weihrauch, \emph{Computable analysis}, Texts in Theoretical Computer
  Science. An EATCS Series, Springer-Verlag, Berlin, 2000, An introduction.

\end{thebibliography}
\def\cprime{$'$} \def\cprime{$'$} \def\cprime{$'$} \def\cprime{$'$}
\providecommand{\bysame}{\leavevmode\hbox to3em{\hrulefill}\thinspace}
\providecommand{\MR}{\relax\ifhmode\unskip\space\fi MR }
\providecommand{\MRhref}[2]{%
  \href{http://www.ams.org/mathscinet-getitem?mr=#1}{#2}
}
\providecommand{\href}[2]{#2}

\end{document}